\providecommand{\U}[1]{\protect \rule{.1in}{.1in}}
\newtheorem{theorem}{Theorem}[section]
\newtheorem{definition}[theorem]{Definition}
\newtheorem{lemma}[theorem]{Lemma}
\newtheorem{proposition}[theorem]{Proposition}
\newtheorem{remark}[theorem]{Remark}
\newenvironment{proof}[1][Proof]{\noindent \textbf{#1.} }{\  $\Box$}
\numberwithin{equation}{section}
\begin{document}

\title{L\'{e}vy's martingale characterization and reflection principle of
$G$-Brownian motion}
\author{Mingshang Hu \thanks{Zhongtai Securities Institute for Financial Studies,
Shandong University, Jinan, Shandong 250100, China. humingshang@sdu.edu.cn.
Research supported by NSF (No. 11671231) and Young Scholars Program of
Shandong University (No. 2016WLJH10)}
\and Xiaojun Ji \thanks{School of Mathematics, Shandong University, Jinan, Shandong
250100, PR China. jixiaojun@mail.sdu.edu.cn}
\and Guomin Liu \thanks{Zhongtai Securities Institute for Financial Studies, Shandong University, Jinan,
Shandong 250100, PR China. sduliuguomin@163.com. }}
\maketitle

\textbf{Abstract}. In this paper, we obtain L\'{e}vy's martingale
characterization of $G$-Brownian motion without the nondegenerate condition.
Base on this characterization, we prove the reflection principle of
$G$-Brownian motion. Furthermore, we use Krylov's estimate to get the
reflection principle of $\tilde{G}$-Brownian motion.

{\textbf{Key words}. }$G$-expectation, $G$-Brownian motion, martingale
characterization, reflection principle

\textbf{AMS subject classifications.} 60H10, 60E05

\addcontentsline{toc}{section}{\hspace*{1.8em}Abstract}

\section{Introduction}

Motivated by the model uncertainty in financial market, Peng in
\cite{peng2005, peng2008} first introduced the notions of \thinspace
$G$-expectation, conditional $G$-expectation and $G$-Brownian motion via the
following $G$-heat equation:%
\begin{equation}
\partial_{t}u-G(D_{x}^{2}u)=0,\text{ }u(0,x)=\varphi(x), \label{intro-1}%
\end{equation}
where $G:\mathbb{S}(d)\rightarrow \mathbb{R}$ is a monotonic and sublinear
function. In \cite{Peng 1}, Peng further extended the definition of
$G$-expectation and $G$-Brownian motion to $\tilde{G}$-expectation and
$\tilde{G}$-Brownian motion via the following PDE:%
\begin{equation}
\partial_{t}u-\tilde{G}(D_{x}^{2}u)=0,\text{ }u(0,x)=\varphi(x),
\label{intro-2}%
\end{equation}
where $\tilde{G}:\mathbb{S}(d)\rightarrow \mathbb{R}$ is monotonic and
dominated by $G$. Based on $G$-Brownian motion, Peng established the related
It\^{o}'s stochastic calculus theory, for example, It\^{o}'s integral,
It\^{o}'s formula, $G$-martingale and $G$-stochastic differential equation
($G$-SDE).

It is well-known that the reflection principle for classical Brownian motion
is an important result. In this paper, we study the reflection principle for
$G$-Brownian motion or, more generally, $\tilde{G}$-Brownian motion, i.e.,
calculate the distribution of the following process:%
\[
\sup_{s\leq t}(B_{s}-B_{t}),\text{ }t\geq0,
\]
where $B$ is a $G$-Brownian motion or $\tilde{G}$-Brownian motion. In order to
obtain the reflection principle, we first investigate the L\'{e}vy's martingale
characterization of $G$-Brownian motion.

In \cite{X-Z, X-Z-1}, Xu et al. obtained the L\'{e}vy's martingale
characterization theorem for $1$-dimensional symmetric $G$-martingale under
the nondegenerate condition. Song \cite{Song} also proved the
martingale characterization theorem for $1$-dimensional symmetric
$G$-martingale under the nondegenerate condition by a different method. In
this paper, we consider general symmetric martingales, which may not be
$G$-martingale, without the nondegenerate condition. We introduce a new kind
of discrete product space method for martingale (see Lemmas \ref{newlem1} and
\ref{newlem2}) to deal with the degenerate case, and obtain the corresponding
L\'{e}vy's martingale characterization of $G$-Brownian motion. In the symmetric
$G$-martingale case, we give a simpler and more direct method. Based on the martingale characterization of $G$-Brownian motion, we obtain the
reflection principle for $G$-Brownian motion.

However, the martingale characterization method does not hold for $\tilde{G}%
$-Brownian motion, because we do not have the L\'{e}vy martingale
characterization of $\tilde{G}$-Brownian motion. In this case, we first use
the estimate in \cite{H-W-Z} to give a discrete approximation of process
$($sgn$(B_{t}))_{t\leq T}$ (see Lemma \ref{sgnB}) under the nondegenerate
condition. Then we obtain the related reflection principle under the
nondegenerate condition. Finally, we apply the Krylov's estimate to get the
related reflection principle without the nondegenerate condition.

The paper is organized as follows. In section 2, we recall some basic notions
and results of sublinear expectation and $G$-Brownian motion. In section 3, we
give the definition of consistent sublinear expectation space, and extend
Peng's definition of stochastic calculus for $G$-Brownian motion to a kind of
martingales on the consistent sublinear expectation space. The L\'{e}vy's
martingale characterization of $G$-Brownian motion is stated and proved in
section 4. In section 5, we obtain the reflection principle for $G$-Brownian
motion and $\tilde{G}$-Brownian motion.

\section{Preliminaries}

In this section, we recall some basic notions and results of sublinear
expectation and $G$-Brownian motion. The readers may refer to \cite{peng2005,
peng2008, Peng 1, D-H-P, Gao, H-P} for more details.

Let $\Omega$ be a given set and $\mathcal{H}$ be a linear space of real-valued
functions on $\Omega$ such that $c\in \mathcal{H}$ for all constants $c$ and
$|X|\in \mathcal{H}$ if $X\in \mathcal{H}$. We further suppose that if
$X_{1},\ldots,X_{n}\in \mathcal{H}$, then $\varphi(X_{1},\ldots,X_{n}%
)\in \mathcal{H}$ for each $\varphi \in C_{b.Lip}(\mathbb{R}^{n})$, where
$C_{b.Lip}(\mathbb{R}^{n})$ denotes the set of all bounded and Lipschitz
functions on $\mathbb{R}^{n}$. $\mathcal{H}$ is considered as the space of
random variables. $X=(X_{1},X_{2},\dots,X_{n})^{T}$, $X_{i}\in \mathcal{H}$, is
called an $n$-dimensional random vector, denoted by $X\in \mathcal{H}^{n}$,
where $^{T}$ denotes the transpose.

\begin{definition}
\label{sublinear expectation} A sublinear expectation is a functional
$\hat{\mathbb{E}}:\mathcal{H}\rightarrow \mathbb{R}$ satisfying the following
properties: for each $X,Y\in \mathcal{H}$, we have

\begin{description}
\item[(1)] \textbf{Monotonicity:} $X\geq Y$ implies $\hat{\mathbb{E}}%
[X]\geq \hat{\mathbb{E}}[Y]$;

\item[(2)] \textbf{Constant preserving:} $\hat{\mathbb{E}}[c]=c$ for
$c\in \mathbb{R}$;

\item[(3)] \textbf{Sub-additivity:} $\hat{\mathbb{E}}[X+Y]\leq \hat{\mathbb{E}%
}[X]+\hat{\mathbb{E}}[Y]$;

\item[(4)] \textbf{Positive homogeneity:} $\hat{\mathbb{E}}[\lambda
X]=\lambda \hat{\mathbb{E}}[X]$ for $\lambda>0$.
\end{description}

The triple $(\Omega,\mathcal{H},\hat{\mathbb{E}})$ is called a sublinear
expectation space. If $(1)$ and $(2)$ are satisfied, $\hat{\mathbb{E}}$ is
called a nonlinear expectation and the triple $(\Omega,\mathcal{H}%
,\hat{\mathbb{E}})$ is called a nonlinear expectation space.
\end{definition}

\begin{remark}
If the inequality in (3) becomes equality, $\hat{\mathbb{E}}$ is called a
linear expectation and $(\Omega,\mathcal{H},\hat{\mathbb{E}})$ is called a
linear expectation space.
\end{remark}

For each given $p\geq1$, in order to define the $p$-norm on $\mathcal{H}$,
suppose $|X|^{p}\in \mathcal{H}$ for each $X\in \mathcal{H}$. We denote by
$\mathbb{L}^{p}(\Omega)$ the completion of $\mathcal{H}$ under the norm
$||X||_{\mathbb{L}^{p}}:=(\hat{\mathbb{E}}[|X|^{p}])^{1/p}$. It is easy to
verify that $\mathbb{L}^{p}(\Omega)\subset \mathbb{L}^{p^{\prime}}(\Omega)$ for
each $1\leq p^{\prime}\leq p$. Note that $|\hat{\mathbb{E}}[X]-\hat
{\mathbb{E}}[Y]|\leq \hat{\mathbb{E}}[|X-Y|]$, then $\hat{\mathbb{E}}$ can be
continuously extended to the mapping from $\mathbb{L}^{1}(\Omega)$ to
$\mathbb{R}$. One can check that $(\Omega,\mathbb{L}^{1}(\Omega),\hat
{\mathbb{E}})$\ forms a sublinear expectation space, which is called a
complete sublinear expectation space.

We say that a sequence $\{X_{n}\}_{n=1}^{\infty}$ converges to $X$ in
$\mathbb{L}^{p}$, denoted by $X=\mathbb{L}^{p}-\lim_{n\rightarrow \infty}X_{n}%
$, if
\[
\lim \limits_{n \rightarrow \infty}\hat{\mathbb{E}}[|X_{n}-X|^{p}]=0.
\]

Now we give the notions of identical distribution and independence.

\begin{definition}
Let $X_{1}$ and $X_{2}$ be two $n$-dimensional random vectors on complete
sublinear expectation spaces $(\Omega_{1},\mathbb{L}^{1}(\Omega_{1}%
),\hat{\mathbb{E}}_{1})$ and $(\Omega_{2},\mathbb{L}^{1}(\Omega_{2}%
),\hat{\mathbb{E}}_{2})$, respectively. They are called identically
distributed, denoted by $X_{1}\overset{d}{=}X_{2}$, if for each $\varphi \in
C_{b.Lip}(\mathbb{R}^{n})$,
\[
\hat{\mathbb{E}}_{1}[\varphi(X_{1})]=\hat{\mathbb{E}}_{2}[\varphi(X_{2})].
\]

\end{definition}

\begin{definition}
Let $(\Omega,\mathbb{L}^{1}(\Omega),\hat{\mathbb{E}})$ be a complete sublinear
expectation space. An $m$-dimensional random vector $Y$ is said to be
independent from an $n$-dimensional random vector $X$ if for each $\varphi \in
C_{b.Lip}(\mathbb{R}^{n+m})$,
\[
\hat{\mathbb{E}}[\varphi(X,Y)]=\hat{\mathbb{E}}[\hat{\mathbb{E}}%
[\varphi(x,Y)]_{x=X}].
\]

\end{definition}

In the following, we review $G$-normal distribution and $G$-Brownian motion.

Let $G:\mathbb{S}(d)\rightarrow \mathbb{R}$ be a given monotonic and sublinear
function, i.e., for $A,A^{\prime}\in \mathbb{S}(d)$,
\[%
\begin{cases}
G\left(  A\right)  \geq G\left(  A^{\prime}\right)  \text{ if }A\geq
A^{\prime},\\
G\left(  A+A^{\prime}\right)  \leq G(A)+G(A^{\prime}),\\
G\left(  \lambda A\right)  =\lambda G\left(  A\right)  \text{ for }\lambda
\geq0,
\end{cases}
\]
where $\mathbb{S}(d)$ denotes the collection of all $d\times d$ symmetric
matrices. By the Hahn-Banach theorem, one can check that there exists a
bounded, convex and closed subset $\Gamma \subset \mathbb{S}_{+}(d)$ such that%
\begin{equation}
G(A)=\frac{1}{2}\sup_{\gamma \in \Gamma}\text{\textrm{tr}}[\gamma A]\text{ for
}A\in \mathbb{S}(d), \label{neweq-1}%
\end{equation}
where $\mathbb{S}_{+}(d)$ denotes the collection of nonnegative elements in
$\mathbb{S}(d)$. From (\ref{neweq-1}), it is easy to deduce that there exists
a constant $C>0$ such that for each $A,A^{\prime}\in\mathbb{S}(d)$,
\[
|G(A)-G(A^{\prime})|\leq C|A-A^{\prime}|.
\]

\begin{definition}
Let $(\Omega,\mathbb{L}^{1}(\Omega),\hat{\mathbb{E}})$ be a complete sublinear
expectation space. A $d$-dimensional random vector $X$ is called $G$-normally
distributed, denoted by $X\overset{d}{=}N(0,\Gamma)$, if for each $\varphi \in
C_{b.Lip}(\mathbb{R}^{d}),$
\[
\hat{\mathbb{E}}[\varphi(X)]=u^{\varphi}(1,0),
\]
where $u^{\varphi}$ is the viscosity solution of the following $G$-heat
equation
\begin{equation}%
\begin{cases}
\partial_{t}u-G(D_{x}^{2}u)=0,\\
u(0,x)=\varphi(x).
\end{cases}
\label{GPDE}%
\end{equation}

\end{definition}

\begin{remark}
If $G(A)=\frac{1}{2}$\textrm{tr}$[A]\ $for$\ A\in \mathbb{S}(d)$, then
$\Gamma=\{I_{d}\}$ and $G$-normal distribution is the classical standard normal distribution.
\end{remark}

\begin{definition}
Let $(\Omega,\mathbb{L}^{1}(\Omega),\hat{\mathbb{E}})$ be a complete sublinear
expectation space. A $d$-dimensional process $(B_{t})_{t\geq0}$ with $B_{t}%
\in(\mathbb{L}^{1}(\Omega))^{d}$ for each $t\geq0$ is called a $G$-Brownian
motion if the following properties are satisfied:

\begin{description}
\item[(i)] $B_{0}=0$;

\item[(ii)] For each $t,s\geq0$, $B_{t+s}-B_{t}\overset{d}{=}N(0,s\Gamma)$ and
is independent from $(B_{t_{1}},\ldots,B_{t_{n}})$, for each $n\in \mathbb{N}$
and $0\leq t_{1}\leq \cdots \leq t_{n}\leq t$.
\end{description}
\end{definition}

\begin{remark}
\label{newre-000} It is easy to verify that for each $\varphi \in
C_{b.Lip}(\mathbb{R}^{d})$, $\hat{\mathbb{E}}[\varphi(B_{t+s}-B_{t}%
)]=u^{\varphi}(s,0)$, where $u^{\varphi}$ is the solution of the $G$-heat
equation (\ref{GPDE}). If $G(A)=\frac{1}{2}$\textrm{tr}$[A]\ $for$\ A\in
\mathbb{S}(d)$, then $G$-Brownian motion is classical standard Brownian motion.
\end{remark}

For each $t\geq0$, set
\[
L_{ip}(\Omega_{t})=\{ \varphi(B_{t_{1}},B_{t_{2}}-B_{t_{1}},\cdots,B_{t_{n}%
}-B_{t_{n-1}}):n\in \mathbb{N},0\leq t_{1}<\cdots<t_{n}\leq t,\varphi \in
C_{b.Lip}(\mathbb{R}^{n\times d})\}
\]
and
\[
L_{ip}(\Omega):=\bigcup_{m=1}^{\infty}L_{ip}(\Omega_{m}).
\]
For each $p\geq1$, we denote by $L_{G}^{p}(\Omega_{t})$ (resp.$\ L_{G}%
^{p}(\Omega)$) the completion of $L_{ip}(\Omega_{t})$ (resp.$\ L_{ip}(\Omega
)$) under the norm $||X||_{L_{G}^{p}}:=(\hat{\mathbb{E}}[|X|^{p}])^{\frac
{1}{p}}$ for each $t\geq0$. For each $\xi=\varphi(B_{t_{1}},B_{t_{2}}%
-B_{t_{1}},\cdots,B_{t_{n}}-B_{t_{n-1}})\in L_{ip}(\Omega)$, define the
conditional expectation of $\xi$ at $t_{i}$, $i\leq n$, by
\begin{equation}
\hat{\mathbb{E}}_{t_{i}}[\varphi(B_{t_{1}},B_{t_{2}}-B_{t_{1}},\cdots
,B_{t_{n}}-B_{t_{n-1}})]=\psi(B_{t_{1}},\cdots,B_{t_{i}}-B_{t_{i-1}}),
\label{new-newww-2}%
\end{equation}
where $\psi(x_{1},\cdots,x_{i})=\hat{\mathbb{E}}[\varphi(x_{1},\cdots
,x_{i},B_{t_{i+1}}-B_{t_{i}},\cdots,B_{t_{n}}-B_{t_{n-1}})]$. It is easy to
check that $\hat{\mathbb{E}}_{t_{i}}$ still satisfies (1)-(4) in Definition
\ref{sublinear expectation} and can be continuously extended to the mapping
from $L_{G}^{1}(\Omega)$ to $L_{G}^{1}(\Omega_{t_{i}})$. $(\Omega,L_{G}%
^{1}(\Omega),(L_{G}^{1}(\Omega_{t}))_{t\geq0},(\hat{\mathbb{E}}_{t})_{t\geq
0})$ is called the $G$-expectation space.

\section{Consistent sublinear expectation space}

In this section, we first give the definition of consistent sublinear
expectation space, and then we extend Peng's definition of stochastic calculus
for $G$-Brownian motion to a kind of martingales on the consistent sublinear
expectation space. We only consider the continuous time case, and the
definitions and results still hold for discrete time case.

Let $\Omega$ be a given set and $(\mathcal{H}_{t})_{t\geq0}$, $\mathcal{H}$ be
a family of linear space of real-valued functions on $\Omega$ such that

\begin{description}
\item[(i)] $\mathcal{H}_{0}=\mathbb{R}$ and $\mathcal{H}_{s}\subset
\mathcal{H}_{t}$ for each $0\leq s\leq t$;

\item[(ii)] If $X\in \mathcal{H}_{t}$ (resp. $\mathcal{H}$), then
$|X|\in \mathcal{H}_{t}$ (resp. $\mathcal{H}$);

\item[(iii)] If $X_{1},\ldots,X_{n}\in \mathcal{H}_{t}$ (resp. $\mathcal{H}$),
then $\varphi(X_{1},\ldots,X_{n})\in \mathcal{H}_{t}$ (resp. $\mathcal{H}$) for
each $\varphi \in C_{b.Lip}(\mathbb{R}^{n})$.
\end{description}

\begin{definition}
\label{sublinear conditional expectation} A consistent sublinear expectation
on $(\mathcal{H}_{t})_{t\geq0}$ is a family of mappings $\hat{\mathbb{E}}%
_{t}:\mathcal{H}\rightarrow \mathcal{H}_{t}$ which satisfies the following
properties: for each $X,Y\in \mathcal{H}$,

\begin{description}
\item[(1)] \textbf{Monotonicity:} $X\geq Y$ implies $\hat{\mathbb{E}}%
_{t}[X]\geq \hat{\mathbb{E}}_{t}[Y]$;

\item[(2)] \textbf{Constant preserving:} $\hat{\mathbb{E}}_{t}[\eta]=\eta$ for
$\eta \in \mathcal{H}_{t}$;

\item[(3)] \textbf{Sub-additivity:} $\hat{\mathbb{E}}_{t}[X+Y]\leq
\hat{\mathbb{E}}_{t}[X]+\hat{\mathbb{E}}_{t}[Y]$;

\item[(4)] \textbf{Positive homogeneity:} $\hat{\mathbb{E}}_{t}[\eta
X]=\eta \hat{\mathbb{E}}_{t}[X]$ for each positive and bounded $\eta
\in \mathcal{H}_{t}$;

\item[(5)] \textbf{Consistency:} $\hat{\mathbb{E}}_{s}[\hat{\mathbb{E}}%
_{t}[X]]=\hat{\mathbb{E}}_{s}[X]$ for $s\leq t$.
\end{description}

The triple $(\Omega,\mathcal{H},(\mathcal{H}_{t})_{t\geq0},(\hat{\mathbb{E}%
}_{t})_{t\geq0})$ is called a consistent sublinear expectation space and
$\hat{\mathbb{E}}_{t}$ is called a sublinear conditional expectation.
\end{definition}

\begin{remark}
We always denote $\hat{\mathbb{E}}:=\hat{\mathbb{E}}_{0}$ in the above
definition. Obviously, $(\Omega,\mathcal{H},\hat{\mathbb{E}})$ forms a
sublinear expectation space.
\end{remark}

\begin{remark}
One can check that the $G$-expectation space $(\Omega,L_{G}^{1}(\Omega
),(L_{G}^{1}(\Omega_{t}))_{t\geq0},(\hat{\mathbb{E}}_{t})_{t\geq0})$ satisfies
(1)-(5) in Definition \ref{sublinear conditional expectation}.
\end{remark}

For each given $p\geq1$, in order to define the $p$-norm on $\mathcal{H}_{t}$
(resp. $\mathcal{H}$), suppose $|X|^{p}\in \mathcal{H}_{t}$ (resp.
$\mathcal{H}$) for each $X\in \mathcal{H}_{t}$ (resp. $\mathcal{H}$), and we
denote by $\mathbb{L}^{p}(\Omega_{t})$ (resp. $\mathbb{L}^{p}(\Omega)$) the
completion of $\mathcal{H}_{t}$ (resp. $\mathcal{H}$) under the norm
$||X||_{\mathbb{L}^{p}}:=(\hat{\mathbb{E}}[|X|^{p}])^{1/p}$. Note that
$\hat{\mathbb{E}}[|\hat{\mathbb{E}}_{t}[X]-\hat{\mathbb{E}}_{t}[Y]|]\leq
\hat{\mathbb{E}}[|X-Y|]$, then $\hat{\mathbb{E}}_{t}$ can be continuously
extended to the mapping from $\mathbb{L}^{1}(\Omega)$ to $\mathbb{L}%
^{1}(\Omega_{t})$. It is easy to check that $(\Omega,\mathbb{L}^{1}%
(\Omega),(\mathbb{L}^{1}(\Omega_{t}))_{t\geq0},(\hat{\mathbb{E}}_{t})_{t\geq
0})$\ forms a consistent sublinear expectation space, which is called a
complete consistent sublinear expectation space.

The following properties are very useful in sublinear analysis.

\begin{proposition}
\label{newpro01} Let $(\Omega,\mathbb{L}^{1}(\Omega),(\mathbb{L}^{1}%
(\Omega_{t}))_{t\geq0},(\hat{\mathbb{E}}_{t})_{t\geq0})$ be a complete
consistent sublinear expectation space. Assume $X\in(\mathbb{L}^{1}(\Omega
_{t}))^{n}$ and $Y\in(\mathbb{L}^{1}(\Omega))^{m}$. Then%
\begin{equation}
\hat{\mathbb{E}}_{t}[\varphi(X,Y)]=\hat{\mathbb{E}}_{t}[\varphi(x,Y)]_{x=X}%
\text{ for each }\varphi \in C_{b.Lip}(\mathbb{R}^{n+m}). \label{new-newww-1}%
\end{equation}
Specially, for each bounded $\eta \in \mathbb{L}^{1}(\Omega_{t})$,%
\[
\hat{\mathbb{E}}_{t}[\eta Z]=\eta^{+}\hat{\mathbb{E}}_{t}[Z]+\eta^{-}%
\hat{\mathbb{E}}_{t}[-Z]\text{ for each }Z\in \mathbb{L}^{1}(\Omega).
\]

\end{proposition}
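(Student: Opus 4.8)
The plan is to prove the substitution identity (\ref{new-newww-1}) first and then read off the formula for $\hat{\mathbb{E}}_t[\eta Z]$ as a special case. Fix $\varphi\in C_{b.Lip}(\mathbb{R}^{n+m})$ with Lipschitz constant $L$ and put $\psi(x):=\hat{\mathbb{E}}_t[\varphi(x,Y)]$. By monotonicity, sub-additivity and constant preservation (Definition \ref{sublinear conditional expectation}), $\psi(x)-\psi(x')\leq\hat{\mathbb{E}}_t[\varphi(x,Y)-\varphi(x',Y)]\leq\hat{\mathbb{E}}_t[L|x-x'|]=L|x-x'|$ and symmetrically, so $x\mapsto\psi(x)$ is $L$-Lipschitz and bounded by $\|\varphi\|_\infty$. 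Since $\hat{\mathbb{E}}[|\hat{\mathbb{E}}_t[\cdot]-\hat{\mathbb{E}}_t[\cdot]|]\leq\hat{\mathbb{E}}[|\cdot-\cdot|]$ and $\hat{\mathbb{E}}_t[|\varphi(x,Y)-\varphi(x,Y')|]\leq L|Y-Y'|$, both sides of (\ref{new-newww-1}) depend $\mathbb{L}^1$-continuously on $(X,Y)$; hence it suffices to treat bounded $X\in\mathcal{H}_t^n$ and $Y\in\mathcal{H}^m$ and extend by density afterwards, in which case $\psi:\mathbb{R}^n\to\mathcal{H}_t$ and the Lipschitz bound above holds pointwise on $\Omega$.

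The crucial step is to read the right-hand side of (\ref{new-newww-1}) as the Lipschitz (Moreau--Yosida type) envelope
\[
\hat{\mathbb{E}}_t[\varphi(x,Y)]_{x=X}\;=\;\inf_{x\in\mathbb{R}^n}\bigl(\psi(x)+L|X-x|\bigr)\;=\;\sup_{x\in\mathbb{R}^n}\bigl(\psi(x)-L|X-x|\bigr),
\]
the infimum and supremum being taken pointwise in $\omega$. Both expressions equal the diagonal value $\omega\mapsto\psi(X(\omega))(\omega)$ precisely because $x\mapsto\psi(x)(\omega)$ is $L$-Lipschitz for every $\omega$ (and this agrees with the usual reading when $X$ is a simple random variable). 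When $|X|\leq M$, replacing $\mathbb{R}^n$ by a finite $\delta$-net of $\{|x|\leq M\}$ perturbs the infimum by at most $2L\delta$ uniformly in $\omega$, and the net-infimum is a finite minimum of terms $\psi(x_i)+L\,\chi_i(X)$ with $\chi_i\in C_{b.Lip}(\mathbb{R}^n)$ agreeing with $|\cdot-x_i|$ on $\{|x|\leq M\}$, hence lies in $\mathcal{H}_t$; therefore $\hat{\mathbb{E}}_t[\varphi(x,Y)]_{x=X}\in\mathbb{L}^1(\Omega_t)$.

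With this object in hand the identity follows by a sandwich. For each fixed $x\in\mathbb{R}^n$ we have $\varphi(X,Y)\leq\varphi(x,Y)+L|X-x|$ pointwise, so by sub-additivity and $\hat{\mathbb{E}}_t[L|X-x|]=L|X-x|$ (constant preservation on $\mathbb{L}^1(\Omega_t)$), $\hat{\mathbb{E}}_t[\varphi(X,Y)]\leq\psi(x)+L|X-x|$; taking the infimum over $x$ gives $\hat{\mathbb{E}}_t[\varphi(X,Y)]\leq\hat{\mathbb{E}}_t[\varphi(x,Y)]_{x=X}$. Symmetrically, $\varphi(x,Y)\leq\varphi(X,Y)+L|x-X|$ yields $\psi(x)\leq\hat{\mathbb{E}}_t[\varphi(X,Y)]+L|x-X|$, i.e.\ $\hat{\mathbb{E}}_t[\varphi(X,Y)]\geq\psi(x)-L|x-X|$, and taking the supremum over $x$ gives the reverse inequality. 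This proves (\ref{new-newww-1}) for bounded $X\in\mathcal{H}_t^n$, $Y\in\mathcal{H}^m$, and density together with the $\mathbb{L}^1$-continuity noted above extends it to $X\in(\mathbb{L}^1(\Omega_t))^n$ and $Y\in(\mathbb{L}^1(\Omega))^m$.

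For the displayed formula, apply (\ref{new-newww-1}) with $n=m=1$, $X=\eta$, $Y=Z$ and $\varphi(x,z)=\theta_N(x)\theta_N(z)$, where $\theta_N(r)=(r\wedge N)\vee(-N)$, so $\varphi\in C_{b.Lip}(\mathbb{R}^2)$. Since $\theta_N(x)$ is a constant for $x\in\mathbb{R}$, positive homogeneity (together with $\hat{\mathbb{E}}_t[cW]=c^-\hat{\mathbb{E}}_t[-W]$ for $c<0$) gives $\hat{\mathbb{E}}_t[\theta_N(x)\theta_N(Z)]=\theta_N(x)^+\hat{\mathbb{E}}_t[\theta_N(Z)]+\theta_N(x)^-\hat{\mathbb{E}}_t[-\theta_N(Z)]$; substituting $x=\eta$ and letting $N\to\infty$ (using $\mathbb{L}^1$-continuity and boundedness of $\eta$) yields $\hat{\mathbb{E}}_t[\eta Z]=\eta^+\hat{\mathbb{E}}_t[Z]+\eta^-\hat{\mathbb{E}}_t[-Z]$, and one drops the boundedness of $Z$ by approximation. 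I expect the only genuinely delicate point to be the first: making sense of $\hat{\mathbb{E}}_t[\varphi(x,Y)]_{x=X}$ for a general, non-simple $X\in(\mathbb{L}^1(\Omega_t))^n$ --- which is exactly what the envelope representation settles, and which simultaneously delivers the sandwich. (Alternatively, the displayed formula has a short self-contained proof: an $\varepsilon$-perturbation upgrades positive homogeneity from strictly positive $\eta$ to $\eta\geq0$, and then a pointwise case analysis on $\{\eta>0\}$, $\{\eta<0\}$, $\{\eta=0\}$, using that $\eta^+$ and $\eta^-$ have disjoint supports, gives both inequalities.)
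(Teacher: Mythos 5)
Your proof is correct, and it takes a genuinely different route from the paper's. The paper discretizes $X$ with a partition of unity: it chooses $h_i^n\in C_0^\infty(\mathbb{R}^n)$ with supports of diameter at most $1/n$ covering $B_N(0)$ and shows that both $\hat{\mathbb{E}}_t[\varphi(X,Y)]$ and $\hat{\mathbb{E}}_t[\varphi(x,Y)]_{x=X}$ lie within $C_\varphi/n+L|X|/N$ of the common approximant $\sum_i h_i^n(X)\hat{\mathbb{E}}_t[\varphi(x_i^n,Y)]$, then sends $n\rightarrow\infty$ and $N\rightarrow\infty$. You instead use the inf-/sup-convolution representation of the $L$-Lipschitz map $x\mapsto\psi(x)=\hat{\mathbb{E}}_t[\varphi(x,Y)]$ and sandwich $\hat{\mathbb{E}}_t[\varphi(X,Y)]$ between $\inf_x(\psi(x)+L|X-x|)$ and $\sup_x(\psi(x)-L|X-x|)$, each inequality falling out in one line from the pointwise bounds $\varphi(X,Y)\leq\varphi(x,Y)+L|X-x|$ and its reverse together with monotonicity, sub-additivity and constant preservation. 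Both arguments ultimately rest on the same two Lipschitz facts (for $\varphi$ and for $\psi$), but yours has the merit of giving an explicit intrinsic meaning to the substitution $\hat{\mathbb{E}}_t[\varphi(x,Y)]_{x=X}$, which the paper uses without comment; the price is a preliminary density reduction to bounded $X\in\mathcal{H}_t^n$, $Y\in\mathcal{H}^m$ (needed so that the Lipschitz bound on $\psi$ holds pointwise on $\Omega$), which the paper avoids by carrying the tail term $L|X|/N$. Your derivation of the second display (truncate, apply (\ref{new-newww-1}) to $\theta_N(x)\theta_N(z)$, use positive homogeneity for constants, substitute $x=\eta$, let $N\rightarrow\infty$) expands the paper's one-sentence deduction and is essentially the intended argument; the only shaky spot in your write-up is the closing parenthetical ``pointwise case analysis on $\{\eta>0\}$, $\{\eta<0\}$, $\{\eta=0\}$,'' which is not genuinely available for $\hat{\mathbb{E}}_t$ in this abstract setting, but that aside is not needed since your main route already covers the claim.
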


\begin{proof}
For each fixed $N>0$, denote $B_{N}(0):=\{x\in \mathbb{R}^{n}:|x|\leq N\}$. For
each integer $n\geq1$, by partition of unity theorem, there exist $h_{i}%
^{n}\in C_{0}^{\infty}(\mathbb{R}^{n})$, $i=1,\ldots,k_{n}$, such that the
diameter of support $\lambda($supp$(h_{i}^{n}))\leq1/n$, $0\leq h_{i}^{n}%
\leq1$ and $I_{B_{N}(0)}(x)\leq \sum_{i=1}^{k_{n}}h_{i}^{n}(x)\leq1$. Choose
$x_{i}^{n}\in \mathbb{R}^{n}$ such that $h_{i}^{n}(x_{i}^{n})>0$ and $L>0$ such
that $|\varphi|\leq L$. Note that%
\begin{align*}
&  \left \vert \sum_{i=1}^{k_{n}}h_{i}^{n}(X)\hat{\mathbb{E}}_{t}[\varphi
(x_{i}^{n},Y)]-\hat{\mathbb{E}}_{t}[\varphi(X,Y)]\right \vert \\
&  \leq \sum_{i=1}^{k_{n}}h_{i}^{n}(X)\left \vert \hat{\mathbb{E}}_{t}%
[\varphi(x_{i}^{n},Y)]-\hat{\mathbb{E}}_{t}[\varphi(X,Y)]\right \vert +\left(
1-\sum_{i=1}^{k_{n}}h_{i}^{n}(X)\right)  \left \vert \hat{\mathbb{E}}%
_{t}[\varphi(X,Y)]\right \vert \\
&  \leq \sum_{i=1}^{k_{n}}h_{i}^{n}(X)\left \vert \hat{\mathbb{E}}_{t}%
[\varphi(x_{i}^{n},Y)]-\hat{\mathbb{E}}_{t}[\varphi(X,Y)]\right \vert
+\frac{L|X|}{N}\\
&  \leq C_{\varphi}\sum_{i=1}^{k_{n}}h_{i}^{n}(X)|X-x_{i}^{n}|+\frac{L|X|}%
{N}\\
&  \leq \frac{C_{\varphi}}{n}+\frac{L|X|}{N}%
\end{align*}
and%
\begin{align*}
&  \left \vert \sum_{i=1}^{k_{n}}h_{i}^{n}(X)\hat{\mathbb{E}}_{t}[\varphi
(x_{i}^{n},Y)]-\hat{\mathbb{E}}_{t}[\varphi(x,Y)]_{x=X}\right \vert \\
&  \leq \sum_{i=1}^{k_{n}}h_{i}^{n}(X)\left \vert \hat{\mathbb{E}}_{t}%
[\varphi(x_{i}^{n},Y)]-\hat{\mathbb{E}}_{t}[\varphi(x,Y)]_{x=X}\right \vert
+\frac{L|X|}{N}\\
&  \leq C_{\varphi}\sum_{i=1}^{k_{n}}h_{i}^{n}(X)|X-x_{i}^{n}|+\frac{L|X|}%
{N}\\
&  \leq \frac{C_{\varphi}}{n}+\frac{L|X|}{N},
\end{align*}
where $C_{\varphi}$ is the Lipschitz constant of $\varphi$, then we obtain%
\[
\left \vert \hat{\mathbb{E}}_{t}[\varphi(X,Y)]-\hat{\mathbb{E}}_{t}%
[\varphi(x,Y)]_{x=X}\right \vert \leq \frac{2C_{\varphi}}{n}+\frac{2L|X|}{N}.
\]
Letting $n\rightarrow \infty$ first and then $N\rightarrow \infty$, we get
$\hat{\mathbb{E}}_{t}[\varphi(X,Y)]=\hat{\mathbb{E}}_{t}[\varphi(x,Y)]_{x=X}$.
By positive homogeneity of $\hat{\mathbb{E}}_{t}$, we can deduce
$\hat{\mathbb{E}}_{t}[\eta Z]=\eta^{+}\hat{\mathbb{E}}_{t}[Z]+\eta^{-}%
\hat{\mathbb{E}}_{t}[-Z]$.
\end{proof}

\begin{proposition}
\label{mean certainty} Let $(\Omega,\mathbb{L}^{1}(\Omega),(\mathbb{L}%
^{1}(\Omega_{t}))_{t\geq0},(\hat{\mathbb{E}}_{t})_{t\geq0})$ be a complete
consistent sublinear expectation space. Assume $Y\in \mathbb{L}^{1}(\Omega)$
such that $\hat{\mathbb{E}}_{t}[Y]=-\hat{\mathbb{E}}_{t}[-Y]$ for some
$t\geq0$. Then, for each $X\in \mathbb{L}^{1}(\Omega)$ and bounded $\eta
\in \mathbb{L}^{1}(\Omega_{t})$, we have
\[
\hat{\mathbb{E}}_{t}[X+\eta Y]=\hat{\mathbb{E}}_{t}[X]+\eta \hat{\mathbb{E}%
}_{t}[Y].
\]

\end{proposition}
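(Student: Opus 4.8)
The plan is to prove the identity $\hat{\mathbb{E}}_t[X+\eta Y]=\hat{\mathbb{E}}_t[X]+\eta\hat{\mathbb{E}}_t[Y]$ in two stages: first establish the ``mean certainty'' additivity for the unweighted case $\eta\equiv 1$, namely $\hat{\mathbb{E}}_t[X+Y]=\hat{\mathbb{E}}_t[X]+\hat{\mathbb{E}}_t[Y]$, and then bootstrap to general bounded $\eta\in\mathbb{L}^1(\Omega_t)$ using positive homogeneity (property (4)) together with Proposition \ref{newpro01}.

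For the first stage, note that sub-additivity (property (3)) gives $\hat{\mathbb{E}}_t[X+Y]\leq\hat{\mathbb{E}}_t[X]+\hat{\mathbb{E}}_t[Y]$ for free, so the content is the reverse inequality. Here I would use the hypothesis $\hat{\mathbb{E}}_t[Y]=-\hat{\mathbb{E}}_t[-Y]$: writing $X=(X+Y)+(-Y)$ and applying sub-additivity again,
\[
\hat{\mathbb{E}}_t[X]\leq\hat{\mathbb{E}}_t[X+Y]+\hat{\mathbb{E}}_t[-Y]=\hat{\mathbb{E}}_t[X+Y]-\hat{\mathbb{E}}_t[Y],
\]
which rearranges to $\hat{\mathbb{E}}_t[X]+\hat{\mathbb{E}}_t[Y]\leq\hat{\mathbb{E}}_t[X+Y]$. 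Combining the two inequalities yields equality. (One should first observe that $\hat{\mathbb{E}}_t[Y]$ is a genuine element of $\mathbb{L}^1(\Omega_t)$ so that these rearrangements are legitimate in the completed space; this follows from the continuous extension discussed before Proposition \ref{newpro01}.)

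For the second stage, I would reduce to the bounded-$\eta$ weighted case by decomposing $\eta=\eta^+-\eta^-$. Since $Y$ has ``certain mean'' at time $t$, so does $\eta^+Y$ and $\eta^- Y$: indeed by Proposition \ref{newpro01}, $\hat{\mathbb{E}}_t[\eta^+ Y]=\eta^+\hat{\mathbb{E}}_t[Y]$ and $\hat{\mathbb{E}}_t[-\eta^+ Y]=\eta^+\hat{\mathbb{E}}_t[-Y]=-\eta^+\hat{\mathbb{E}}_t[Y]$ (using positive homogeneity with the positive bounded factor $\eta^+$), hence $\hat{\mathbb{E}}_t[\eta^+ Y]=-\hat{\mathbb{E}}_t[-\eta^+Y]$; similarly for $\eta^- Y$. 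Then I apply the unweighted additivity from stage one twice: $\hat{\mathbb{E}}_t[X+\eta Y]=\hat{\mathbb{E}}_t[(X+\eta^+Y)+(-\eta^- Y)]=\hat{\mathbb{E}}_t[X+\eta^+Y]+\hat{\mathbb{E}}_t[-\eta^- Y]=\hat{\mathbb{E}}_t[X]+\hat{\mathbb{E}}_t[\eta^+ Y]+\hat{\mathbb{E}}_t[-\eta^-Y]$, and finally use Proposition \ref{newpro01} once more to rewrite $\hat{\mathbb{E}}_t[\eta^+ Y]+\hat{\mathbb{E}}_t[-\eta^- Y]=\eta^+\hat{\mathbb{E}}_t[Y]+\eta^-\hat{\mathbb{E}}_t[-Y]=(\eta^+-\eta^-)\hat{\mathbb{E}}_t[Y]=\eta\hat{\mathbb{E}}_t[Y]$, where the penultimate step again uses $\hat{\mathbb{E}}_t[-Y]=-\hat{\mathbb{E}}_t[Y]$.

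I do not expect a serious obstacle here; the argument is a standard ``linearity on a mean-certain direction'' manipulation. The only points requiring care are bookkeeping ones: making sure each application of sub-additivity and positive homogeneity is valid in the completed space $\mathbb{L}^1(\Omega)$ rather than just on $\mathcal{H}$, checking that $\eta^+,\eta^-$ are bounded and lie in $\mathbb{L}^1(\Omega_t)$ (which follows from property (iii) of $(\mathcal{H}_t)$ and continuity of $x\mapsto x^\pm$), and verifying that $\hat{\mathbb{E}}_t[-\eta^- Y]$ is handled by viewing $-\eta^- Y = \eta^-\cdot(-Y)$ so that positive homogeneity with the positive factor $\eta^-$ applies. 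If one prefers to avoid the $\eta^\pm$ split, an alternative is to invoke the ``specially'' clause of Proposition \ref{newpro01} directly with $Z=Y$, giving $\hat{\mathbb{E}}_t[\eta Y]=\eta^+\hat{\mathbb{E}}_t[Y]+\eta^-\hat{\mathbb{E}}_t[-Y]=\eta\hat{\mathbb{E}}_t[Y]$ in one line, and then only the unweighted additivity $\hat{\mathbb{E}}_t[X+\eta Y]=\hat{\mathbb{E}}_t[X]+\hat{\mathbb{E}}_t[\eta Y]$ remains, which follows from stage one once we know $\eta Y$ is mean-certain at $t$.
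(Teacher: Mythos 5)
Your proposal is correct and uses the same ingredients as the paper: Proposition \ref{newpro01} to reduce $\hat{\mathbb{E}}_t[\pm\eta Y]$ to $\pm\eta\hat{\mathbb{E}}_t[Y]$, and the two-sided sub-additivity trick $\hat{\mathbb{E}}_t[X]\leq\hat{\mathbb{E}}_t[X+Z]+\hat{\mathbb{E}}_t[-Z]$ to upgrade the inequality to an equality. The one-line alternative you mention at the end (apply the ``specially'' clause of Proposition \ref{newpro01} with $Z=Y$ to get $\hat{\mathbb{E}}_t[\eta Y]=\eta\hat{\mathbb{E}}_t[Y]$ and $\hat{\mathbb{E}}_t[-\eta Y]=-\eta\hat{\mathbb{E}}_t[Y]$, then conclude by sub-additivity in both directions) is exactly the paper's proof; your two-stage version with the $\eta^{\pm}$ split is just a slightly longer arrangement of the same argument.
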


\begin{proof}
By Proposition \ref{newpro01} and $\hat{\mathbb{E}}_{t}[Y]=-\hat{\mathbb{E}%
}_{t}[-Y]$, we get
\[
\hat{\mathbb{E}}_{t}[\eta Y]=\eta^{+}\hat{\mathbb{E}}_{t}[Y]+\eta^{-}%
\hat{\mathbb{E}}_{t}[-Y]=\eta^{+}\hat{\mathbb{E}}_{t}[Y]-\eta^{-}%
\hat{\mathbb{E}}_{t}[Y]=\eta \hat{\mathbb{E}}_{t}[Y].
\]
Similarly, we have $\hat{\mathbb{E}}_{t}[-\eta Y]=-\eta \hat{\mathbb{E}}%
_{t}[Y]$. Then by the sub-additivity of $\hat{\mathbb{E}}_{t}$, we obtain
\[
\hat{\mathbb{E}}_{t}[X+\eta Y]\leq \hat{\mathbb{E}}_{t}[X]+\hat{\mathbb{E}}%
_{t}[\eta Y]=\hat{\mathbb{E}}_{t}[X]+\eta \hat{\mathbb{E}}_{t}[Y]
\]
and
\[
\hat{\mathbb{E}}_{t}[X]+\eta \hat{\mathbb{E}}_{t}[Y]=\hat{\mathbb{E}}%
_{t}[X]-\hat{\mathbb{E}}_{t}[-\eta Y]\leq \hat{\mathbb{E}}_{t}[X+\eta Y].
\]
Thus
\[
\hat{\mathbb{E}}_{t}[X+\eta Y]=\hat{\mathbb{E}}_{t}[X]+\eta \hat{\mathbb{E}%
}_{t}[Y].
\]

\end{proof}

Now we give the definitions of adapted processes and martingales on the
consistent sublinear expectation space.

\begin{definition}
Let $(\Omega,\mathbb{L}^{1}(\Omega),(\mathbb{L}^{1}(\Omega_{t}))_{t\geq
0},(\hat{\mathbb{E}}_{t})_{t\geq0})$ be a complete consistent sublinear
expectation space. A $d$-dimensional adapted process is a family of random
vectors $(X_{t})_{t\geq0}$ such that $X_{t}\in(\mathbb{L}^{1}(\Omega_{t}%
))^{d}$ for each $t\geq0$. Two $d$-dimensional adapted processes $X^{i}%
=(X_{t}^{i})_{t\geq0}$, $i=1,2$, on complete consistent sublinear expectation
spaces $(\Omega^{i},\mathbb{L}^{1}(\Omega^{i}),(\mathbb{L}^{1}(\Omega_{t}%
^{i}))_{t\geq0},(\hat{\mathbb{E}}_{t}^{i})_{t\geq0})$ are called identically
distributed, denoted by $X^{1}\overset{d}{=}X^{2}$, if for each $n\in
\mathbb{N}$, $0\leq t_{1}<\cdots<t_{n}<\infty$, $\varphi \in C_{b.Lip}%
(\mathbb{R}^{n\times d})$,
\[
\hat{\mathbb{E}}^{1}[\varphi(X_{t_{1}}^{1},X_{t_{2}}^{1}-X_{t_{1}}^{1}%
,\cdots,X_{t_{n}}^{1}-X_{t_{n-1}}^{1})]=\mathbb{\hat{E}}^{2}[\varphi(X_{t_{1}%
}^{2},X_{t_{2}}^{2}-X_{t_{1}}^{2},\cdots,X_{t_{n}}^{2}-X_{t_{n-1}}^{2})].
\]

\end{definition}

\begin{definition}
Let $(\Omega,\mathbb{L}^{1}(\Omega),(\mathbb{L}^{1}(\Omega_{t}))_{t\geq
0},(\hat{\mathbb{E}}_{t})_{t\geq0})$ be a complete consistent sublinear
expectation space. A $d$-dimensional adapted process $M_{t}=(M_{t}^{1}%
,\ldots,M_{t}^{d})^{T}$, $t\geq0$, is called a martingale if $\hat{\mathbb{E}%
}_{s}[M_{t}^{i}]=M_{s}^{i}$ for each $s\leq t$ and $i\leq d$. Furthermore, a
$d$-dimensional martingale $M$ is called symmetric if $\hat{\mathbb{E}}%
[M_{t}^{i}]=-\hat{\mathbb{E}}[-M_{t}^{i}]$ for each $t\geq0$ and $i\leq d$.
\end{definition}

\begin{remark}
In the above definition, $\hat{\mathbb{E}}[M_{t}^{i}]=-\hat{\mathbb{E}}%
[-M_{t}^{i}]$ implies $\hat{\mathbb{E}}_{s}[M_{t}^{i}]=-\hat{\mathbb{E}}%
_{s}[-M_{t}^{i}]$ for each $s\in \lbrack0,t]$, which is deduced by
$\hat{\mathbb{E}}_{s}[M_{t}^{i}]+\hat{\mathbb{E}}_{s}[-M_{t}^{i}]\geq0$ and
\[
\hat{\mathbb{E}}[\hat{\mathbb{E}}_{s}[M_{t}^{i}]+\hat{\mathbb{E}}_{s}%
[-M_{t}^{i}]]\leq \hat{\mathbb{E}}[\hat{\mathbb{E}}_{s}[M_{t}^{i}%
]]+\hat{\mathbb{E}}[\hat{\mathbb{E}}_{s}[-M_{t}^{i}]]=\hat{\mathbb{E}}%
[M_{t}^{i}]+\hat{\mathbb{E}}[-M_{t}^{i}]=0.
\]

\end{remark}

In the following, we construct the stochastic calculus with respect to a kind
of martingales. For simplicity, we first discuss the $1$-dimensional case.

Let $M$ be a symmetric martingale such that $M_{0}=0$, $M_{t}\in \mathbb{L}%
^{2}(\Omega_{t})$ for each $t\geq0$ and%

\begin{equation}
\hat{\mathbb{E}}_{t}[|M_{t+s}-M_{t}|^{2}]\leq Cs,\text{ for each }t,s\geq0,
\label{M}%
\end{equation}
where $C>0$ is a constant.

Let $T>0$ be given. Set%
\[
\mathbb{M}^{2,0}(0,T):=\{ \eta_{t}=\sum_{i=0}^{n-1}\xi_{i}I_{[t_{i},t_{i+1}%
)}(t):0=t_{0}<\cdots<t_{n}=T,\xi_{i}\in \mathbb{L}^{2}(\Omega_{t_{i}})\}.
\]
We denote by $\mathbb{M}^{2}(0,T)$ the completion of $\mathbb{M}^{2,0}(0,T)$
under the norm $||\eta||_{\mathbb{M}^{2}}=(\hat{\mathbb{E}}[\int_{0}^{T}%
|\eta_{t}|^{2}dt])^{1/2}$.

For each $\eta \in \mathbb{M}^{2,0}(0,T)$ with $\eta_{t}=\sum_{i=0}^{n-1}\xi
_{i}I_{[t_{i},t_{i+1})}(t)$, define the stochastic integral with respect to
$M$ as,
\[
I(\eta)=\int_{0}^{T}\eta_{t}dM_{t}:=\sum_{i=0}^{n-1}\xi_{{i}}(M_{t_{i+1}%
}-M_{t_{i}}).
\]

The proof of the following Lemma is the same to the proof of Lemma 3.5 in
\cite{Peng 1}. We omit it here.

\begin{lemma}
\label{Mcontrol} The mapping $I:\mathbb{M}^{2,0}(0,T)\rightarrow \mathbb{L}%
^{2}(\Omega_{T})$ is a continuous linear mapping and can be continuously
extended to $I:\mathbb{M}^{2}(0,T)\rightarrow \mathbb{L}^{2}(\Omega_{T})$. For
each $\eta \in \mathbb{M}^{2}(0,T)$, $(\int_{0}^{t}\eta_{s}dM_{s})_{t\leq T}$ is
a symmetric martingale and
\begin{equation}
\hat{\mathbb{E}}[|\int_{0}^{T}\eta_{t}dM_{t}|^{2}]\leq C\hat{\mathbb{E}}%
[\int_{0}^{T}|\eta_{t}|^{2}dt]. \label{M continuity}%
\end{equation}

\end{lemma}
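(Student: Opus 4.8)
The plan is to transcribe Peng's construction of the stochastic integral (the proof of Lemma 3.5 in \cite{Peng 1}), replacing every use of linearity of the expectation by an appeal to the \emph{symmetry} of $M$ via Propositions \ref{newpro01} and \ref{mean certainty}. Linearity of $I$ on $\mathbb{M}^{2,0}(0,T)$ is immediate, so the whole matter reduces to the a priori estimate (\ref{M continuity}): once it holds, continuity of $I$ and its unique extension to $\mathbb{M}^{2}(0,T)$ follow from the density of $\mathbb{M}^{2,0}(0,T)$ in $\mathbb{M}^{2}(0,T)$ and the completeness of $\mathbb{L}^{2}(\Omega_T)$, and the estimate passes to the limit. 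I would first reduce to step processes $\eta_t=\sum_{i=0}^{n-1}\xi_i I_{[t_i,t_{i+1})}(t)$ with \emph{bounded} coefficients, which are still dense in $\mathbb{M}^{2}(0,T)$; boundedness of the $\xi_i$ is exactly what is needed to invoke positive homogeneity (property (4)) and Proposition \ref{mean certainty}.

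For such an $\eta$, set $X_k=\sum_{i=0}^{k-1}\xi_i(M_{t_{i+1}}-M_{t_i})$, so that $X_n=I(\eta)$, and abbreviate $\Delta M_k=M_{t_{k+1}}-M_{t_k}$, $\Delta t_k=t_{k+1}-t_k$; each $X_k$ lies in $\mathbb{L}^{2}(\Omega_{t_k})$. Expanding $|X_{k+1}|^{2}=|X_k|^{2}+2X_k\xi_k\Delta M_k+\xi_k^{2}(\Delta M_k)^{2}$ and conditioning at $t_k$, I would use: first, that $M$ is a symmetric martingale, so $\hat{\mathbb{E}}_{t_k}[\Delta M_k]=-\hat{\mathbb{E}}_{t_k}[-\Delta M_k]=0$ (martingale property together with the remark following the definition of symmetric martingale), whence by Proposition \ref{mean certainty} the cross term $2X_k\xi_k\Delta M_k$, being of the form $\eta Y$ with $Y=\Delta M_k$ of certain conditional mean zero, is killed by $\hat{\mathbb{E}}_{t_k}$; second, that $|X_k|^{2}$ is $\mathbb{L}^{1}(\Omega_{t_k})$-measurable and passes through by constant preserving and subadditivity; and third, that $\hat{\mathbb{E}}_{t_k}[\xi_k^{2}(\Delta M_k)^{2}]=\xi_k^{2}\hat{\mathbb{E}}_{t_k}[(\Delta M_k)^{2}]\leq C\xi_k^{2}\Delta t_k$ by positive homogeneity and hypothesis (\ref{M}). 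This gives the one-step estimate $\hat{\mathbb{E}}_{t_k}[|X_{k+1}|^{2}]\leq|X_k|^{2}+C\xi_k^{2}\Delta t_k$. Since the multiplier $2X_k\xi_k$ is only square-integrable and not bounded, applying Proposition \ref{mean certainty} at this point calls for a routine truncation of $X_k$ together with the approximability of $\mathbb{L}^{2}$-elements by their truncations; I regard this as a technical point rather than the crux.

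To pass from the one-step estimate to (\ref{M continuity}), taking $\hat{\mathbb{E}}$ naively at each step would only yield $\hat{\mathbb{E}}[|I(\eta)|^{2}]\leq C\sum_k\hat{\mathbb{E}}[\xi_k^{2}]\Delta t_k$, a right-hand side that subadditivity makes in general strictly larger than $C\hat{\mathbb{E}}[\int_0^T|\eta_t|^{2}dt]$ — the wrong direction. I would therefore introduce the compensator $A_k=C\sum_{i=0}^{k-1}\xi_i^{2}\Delta t_i$, which is nonnegative, increasing, and $\mathbb{L}^{1}(\Omega_{t_k})$-measurable. Combining the one-step estimate with the translation identity $\hat{\mathbb{E}}_{t_k}[W+\zeta]=\hat{\mathbb{E}}_{t_k}[W]+\zeta$ for $\zeta\in\mathbb{L}^{1}(\Omega_{t_k})$ (itself a consequence of constant preserving and subadditivity) shows that $(|X_k|^{2}-A_k)_k$ is a supermartingale, so by consistency (property (5)) and monotonicity $\hat{\mathbb{E}}[|X_n|^{2}-A_n]\leq|X_0|^{2}-A_0=0$; one more use of subadditivity then gives $\hat{\mathbb{E}}[|I(\eta)|^{2}]\leq\hat{\mathbb{E}}[A_n]=C\hat{\mathbb{E}}[\int_0^T|\eta_t|^{2}dt]$, which is (\ref{M continuity}), and in particular $I(\eta)\in\mathbb{L}^{2}(\Omega_T)$.

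Finally, the martingale assertion: for a step process with bounded coefficients, $(\int_0^t\eta_s\,dM_s)_{t\leq T}$ is a finite sum, and $\hat{\mathbb{E}}_s[\int_0^t\eta_s\,dM_s]=\int_0^s\eta_s\,dM_s$ follows by conditioning successively at the partition points and using $\hat{\mathbb{E}}_{t_i}[\xi_i\Delta M_i]=\xi_i^{+}\hat{\mathbb{E}}_{t_i}[\Delta M_i]+\xi_i^{-}\hat{\mathbb{E}}_{t_i}[-\Delta M_i]=0$ (Proposition \ref{newpro01}); since $I(-\eta)=-I(\eta)$, the same computation applied to $-\eta$ gives symmetry. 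Applying (\ref{M continuity}) on $[0,t]$ shows $\int_0^t\eta_s\,dM_s\in\mathbb{L}^{2}(\Omega_t)$, and a general $\eta\in\mathbb{M}^{2}(0,T)$ is reached by approximation, using that each $\hat{\mathbb{E}}_s$ is $\mathbb{L}^{1}$-continuous. The main obstacle is precisely the vanishing of the odd and mixed terms in $|I(\eta)|^{2}$: unlike the classical case this is not automatic, it rests on the symmetry of $M$ through Proposition \ref{mean certainty} (forcing the reduction to bounded coefficients and the truncation of the partial sums), and the induction must be organized via the compensator $A_k$ so that subadditivity is used in the favorable direction.
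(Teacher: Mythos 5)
Your argument is the standard one the paper is pointing to (the proof of Lemma 3.5 in Peng's notes, adapted to a symmetric martingale satisfying (\ref{M})): kill the cross terms via the symmetry of $M$ through Proposition \ref{mean certainty}, bound the diagonal terms by $C\xi_k^{2}\Delta t_k$, and iterate while keeping the compensator inside a single expectation so that sub-additivity is only used in the favorable direction — your supermartingale phrasing of that last step is just a repackaging of Peng's chain of inequalities. The two technical points you flag (truncation of the partial sums before invoking Proposition \ref{mean certainty}, and the reduction to bounded coefficients) are handled exactly as you indicate, so the proposal is correct and essentially identical to the intended proof.
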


For multi-dimensional case, let $M=(M^{1},\cdots,M^{d})^{T}$ be a
$d$-dimensional symmetric martingale with $M^{i}$ satisfying (\ref{M}) for
$i=1,\ldots,d$. Then for each $(n\times d)$-dimensional process $\eta
=(\eta^{ij})$ with $\eta^{ij}\in \mathbb{M}^{2}(0,T)$, the definition of the
stochastic integral $\int_{0}^{T}\eta_{t}dM_{t}$ can be defined through
$1$-dimensional integral just like the classical case.

Now we consider the quadratic variation process of the $1$-dimensional
symmetric martingale $M$ satisfying (\ref{M}). For any $t\geq0$, let $\pi
_{t}^{n}=\{t_{0}^{n},...,t_{n}^{n}\}$ be a partition of $[0,t]$ and denote
\[
\mu(\pi_{t}^{n}):=\max \{|t_{i+1}^{n}-t_{i}^{n}|:i=0,1,\cdots,n-1\}.
\]
Note that
\begin{align*}
\sum_{i=0}^{n-1}(M_{t_{i+1}^{n}}-M_{t_{i}^{n}})^{2}  &  =\sum_{i=0}%
^{n-1}(M_{t_{i+1}^{n}}^{2}-M_{t_{i}^{n}}^{2})-2\sum_{i=0}^{n-1}M_{t_{i}^{n}%
}(M_{t_{i+1}^{n}}-M_{t_{i}^{n}})\\
&  =M_{t}^{2}-2\sum_{i=0}^{n-1}M_{t_{i}^{n}}(M_{t_{i+1}^{n}}-M_{t_{i}^{n}}),
\end{align*}
and
\begin{align*}
\hat{\mathbb{E}}[\int_{0}^{t}\sum_{i=0}^{n-1}|M_{t_{i}^{n}}I_{[t_{i}%
^{n},t_{i+1}^{n})}(s)-M_{s}|^{2}ds]  &  =\hat{\mathbb{E}}[\sum_{i=0}^{n-1}%
\int_{t_{i}^{n}}^{t_{i+1}^{n}}|M_{t_{i}^{n}}-M_{s}|^{2}ds]\\
&  \leq \sum_{i=0}^{n-1}\int_{t_{i}^{n}}^{t_{i+1}^{n}}\hat{\mathbb{E}%
}[|M_{t_{i}^{n}}-M_{s}|^{2}]ds\\
&  \leq \frac{Ct}{2}\mu(\pi_{t}^{n}),
\end{align*}
then $\sum_{i=0}^{n-1}(M_{t_{i+1}^{n}}-M_{t_{i}^{n}})^{2}$ converges to
$M_{t}^{2}-2\int_{0}^{t}M_{s}dM_{s}$ in $\mathbb{L}^{2}$ as $\mu(\pi_{t}%
^{n})\rightarrow0$. We call the limit the quadratic variation of $M$ and
denote it by $\langle M\rangle_{t}$.

It is clear that $\langle M\rangle$ is an increasing process. By Lemma
\ref{Mcontrol} and (\ref{M}), we can obtain for each $t,s\geq0$,
\[
\hat{\mathbb{E}}_{t}[|\langle M\rangle_{t+s}-\langle M\rangle_{t}%
|]=\hat{\mathbb{E}}_{t}[\langle M\rangle_{t+s}-\langle M\rangle_{t}%
]=\hat{\mathbb{E}}_{t}[|M_{t+s}-M_{t}|^{2}]\leq Cs.
\]

For symmetric martingales $M$ and $M^{\prime}$ satisfying (\ref{M}), we know
that $M+M^{\prime}$ and $M-M^{\prime}$ are also symmetric martingales
satisfying (\ref{M}). As%

\begin{align*}
&  \sum_{i=0}^{n-1}(M_{t_{i+1}^{n}}-M_{t_{i}^{n}})(M_{t_{i+1}^{n}}^{\prime
}-M_{t_{i}^{n}}^{\prime})\\
&  =\frac{1}{4}\sum_{i=0}^{n-1}\{[(M_{t_{i+1}^{n}}+M_{t_{i+1}^{n}}^{\prime
})-(M_{t_{i}^{n}}+M_{t_{i}^{n}}^{\prime})]^{2}-[(M_{t_{i+1}^{n}}%
-M_{t_{i+1}^{n}}^{\prime})-(M_{t_{i}^{n}}-M_{t_{i}^{n}}^{\prime})]^{2}\},
\end{align*}
we can define the mutual variation process by
\begin{align*}
\langle M,M^{\prime}\rangle_{t}  &  :=\mathbb{L}^{2}-\lim_{\mu(\pi_{t}%
^{n})\rightarrow0}\sum_{i=0}^{n-1}(M_{t_{i+1}^{n}}-M_{t_{i}^{n}}%
)(M_{t_{i+1}^{n}}^{\prime}-M_{t_{i}^{n}}^{\prime})\\
&  \ =\frac{1}{4}[\langle M+M^{\prime}\rangle_{t}-\langle M-M^{\prime}%
\rangle_{t}].
\end{align*}

For a $d$-dimensional symmetric martingale $M=(M^{1},\cdots,M^{d})^{T}$ with
$M^{i}$ satisfying (\ref{M}) for $i=1,\ldots,d$, we define the quadratic
variation by
\[
\langle M\rangle_{t}:=(\langle M^{i},M^{j}\rangle_{t})_{1\leq i,j\leq d}.
\]

\section{L\'{e}vy's martingale characterization of $G$-Brownian motion}

The following theorem is the L\'{e}vy's martingale characterization of
$G$-Brownian motion on the complete consistent sublinear expectation space.

\begin{theorem}
\label{levy theorem} Let $(\Omega,\mathbb{L}^{1}(\Omega),(\mathbb{L}%
^{1}(\Omega_{t}))_{t\geq0},(\hat{\mathbb{E}}_{t})_{t\geq0})$ be a complete
consistent sublinear expectation space and $G:\mathbb{S}(d)\rightarrow
\mathbb{R}$ be a given monotonic and sublinear function. Assume $(M_{t}%
)_{t\geq0}$ is a $d$-dimensional symmetric martingale satisfying
$M_{0}=0,M_{t}\in(\mathbb{L}^{3}(\Omega_{t}))^{d}$ for each $t\geq0$ and
$\sup \{ \hat{\mathbb{E}}[|M_{t+\delta}-M_{t}|^{3}]:t\leq T\}=o(\delta
)\ $as$\  \delta \downarrow0$ for each fixed $T>0$. Then the following
conditions are equivalent:

\begin{description}
\item[(1)] $(M_{t})_{t\geq0}$ is a $G$-Brownian motion;

\item[(2)] $\hat{\mathbb{E}}_{t}[|M_{t+s}-M_{t}|^{2}]\leq Cs$ for each
$t,s\geq0$ and the process $\frac{1}{2}$\textrm{tr}$[A\langle M\rangle
_{t}]-G(A)t$, $t\geq0$, is a martingale for each $A\in \mathbb{S}(d)$, where
$C>0$ is a constant;

\item[(3)] The process $\frac{1}{2}\langle AM_{t},M_{t}\rangle-G(A)t$,
$t\geq0$, is a martingale for each $A\in \mathbb{S}(d)$.
\end{description}
\end{theorem}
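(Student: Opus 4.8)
The plan is to establish the cycle of implications $(1)\Rightarrow(3)\Rightarrow(2)\Rightarrow(1)$, which keeps each individual step as light as possible. The implication $(1)\Rightarrow(3)$ is the easy direction: if $M$ is a $G$-Brownian motion, then by the defining properties of $G$-Brownian motion, $M_{t+s}-M_t\overset{d}{=}N(0,s\Gamma)$ and is independent from the past, so $\langle M\rangle_t$ is a deterministic-like object in the sense that the increments of $\frac{1}{2}\langle AM,M\rangle$ have the right conditional mean. Concretely, I would compute $\hat{\mathbb{E}}_t[\frac{1}{2}\langle AM_{t+s},M_{t+s}\rangle-\frac{1}{2}\langle AM_t,M_t\rangle]$ by writing the increment of the mutual variation in terms of increments of $M$ over a fine partition and using $\hat{\mathbb{E}}[\mathrm{tr}[A(M_{t+s}-M_t)(M_{t+s}-M_t)^T]]=2G(A)s$, which follows from the $G$-heat equation applied to the quadratic test function (approximated by $C_{b.Lip}$ functions). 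The symmetry of $M$ and Proposition \ref{mean certainty} are what let me pull the conditional expectation through linearly.

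For $(3)\Rightarrow(2)$ I first note that $\frac{1}{2}\langle AM_t,M_t\rangle=\frac{1}{2}\mathrm{tr}[A\langle M\rangle_t]$ by the definition of the quadratic variation matrix $\langle M\rangle_t=(\langle M^i,M^j\rangle_t)$, so the two martingale statements in $(2)$ and $(3)$ coincide once we know $\hat{\mathbb{E}}_t[|M_{t+s}-M_t|^2]\leq Cs$. To get that bound from $(3)$: taking $A=I_d$ shows $\frac{1}{2}\mathrm{tr}[\langle M\rangle_t]-G(I_d)t$ is a martingale, hence $\hat{\mathbb{E}}_t[\frac{1}{2}(\mathrm{tr}[\langle M\rangle_{t+s}]-\mathrm{tr}[\langle M\rangle_t])]=G(I_d)s$; combining this with the identity $|M_{t+s}-M_t|^2=\mathrm{tr}[\langle M\rangle_{t+s}]-\mathrm{tr}[\langle M\rangle_t]+(\text{stochastic integral terms})$ — which is exactly the computation in the text preceding the theorem, done on each coordinate — and the fact that the stochastic integral $\int_t^{t+s}M_r\,dM_r$ has zero conditional expectation by Lemma \ref{Mcontrol}, yields $\hat{\mathbb{E}}_t[|M_{t+s}-M_t|^2]=\hat{\mathbb{E}}_t[\mathrm{tr}[\langle M\rangle_{t+s}]-\mathrm{tr}[\langle M\rangle_t]]\leq 2G(I_d)s=:Cs$. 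Here I also need that $|M_{t+s}-M_t|^2\in\mathbb{L}^1$, which comes from the $\mathbb{L}^3$ hypothesis, and that $\langle M\rangle$ is well-defined, which requires \eqref{M} — so strictly I should first derive the $Cs$ bound and only then invoke the quadratic variation machinery.

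The substantive implication is $(2)\Rightarrow(1)$, and that is where I expect the real work. The strategy is the standard one: fix $T>0$, fix $\varphi\in C_{b.Lip}(\mathbb{R}^d)$, let $u^{\varphi}$ solve the $G$-heat equation \eqref{GPDE}, and aim to show $\hat{\mathbb{E}}_t[\varphi(M_T-M_t)]=u^{\varphi}(T-t,0)$ (more generally $\hat{\mathbb{E}}_t[\varphi(x+M_T-M_t)]=u^{\varphi}(T-t,x)$), which identifies the conditional law of the increment and, together with the independence built into Proposition \ref{newpro01}, gives that $M$ is a $G$-Brownian motion. One takes a partition $0=t_0<\cdots<t_n=T-t$, writes the telescoping sum $u^{\varphi}(T-t,0)-\varphi(M_T-M_t)=\sum_i[u^{\varphi}(T-t-t_{i+1},\Delta_{i+1})-u^{\varphi}(T-t-t_i,\Delta_i)]$ where $\Delta_i=M_{t+t_i}-M_t$, and Taylor-expands each term to second order. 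The first-order term vanishes in conditional expectation because $M$ is a symmetric martingale (via Proposition \ref{mean certainty}); the key cancellation is between the time-derivative term $-\partial_t u^{\varphi}\,\Delta t_i$ and the second-order term, which in conditional expectation produces $\frac{1}{2}\mathrm{tr}[D^2_x u^{\varphi}\,(\langle M\rangle_{t+t_{i+1}}-\langle M\rangle_{t+t_i})]$; hypothesis $(2)$, applied with $A=D^2_x u^{\varphi}$ frozen at the relevant point, replaces this by $G(D^2_x u^{\varphi})\Delta t_i=\partial_t u^{\varphi}\Delta t_i$, killing the time term. The main obstacles are analytic bookkeeping: controlling the Taylor remainder using the $o(\delta)$ third-moment hypothesis and the bound $\hat{\mathbb{E}}_t[|\langle M\rangle_{t+s}-\langle M\rangle_t|]\leq Cs$; handling the fact that $u^{\varphi}$ is only a viscosity solution with $C^{1,2}$ interior regularity rather than smooth up to the boundary, which forces a mollification/approximation argument (replace $\varphi$ by smooth approximants, or use interior estimates away from $t=0$ and a separate small-time estimate); and the degenerate case, where $\Gamma$ may be rank-deficient so there is no uniform ellipticity to lean on — this is precisely where the paper advertises its new discrete product space method (Lemmas \ref{newlem1}, \ref{newlem2}), and I would expect to need that device to make the freezing-of-$A$ step rigorous when $\langle M\rangle$ is not absolutely continuous with a nice density.
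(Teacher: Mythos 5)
Your overall architecture --- two easy implications plus one hard PDE/Taylor-expansion implication --- matches the paper's, which runs $(1)\Rightarrow(2)\Rightarrow(3)\Rightarrow(1)$; your reordering of the cycle is immaterial. One minor issue first: $\frac{1}{2}\langle AM_{t},M_{t}\rangle$ in condition (3) is the Euclidean quadratic form $\frac{1}{2}M_{t}^{T}AM_{t}$, not $\frac{1}{2}\mathrm{tr}[A\langle M\rangle_{t}]$; the two differ by the symmetric martingale built from $\int_{0}^{t}M^{i}dM^{j}$ (in $d=1$, $M_{t}^{2}=2\int_{0}^{t}M_{s}dM_{s}+\langle M\rangle_{t}$), which is precisely how the paper passes between (2) and (3). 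Your ``first note'' is therefore false as an identity, though your subsequent computation --- take $A=I_{d}$, use symmetry of $M$ and Proposition \ref{mean certainty} to kill the cross term and get $\hat{\mathbb{E}}_{t}[|M_{t+s}-M_{t}|^{2}]=2G(I_{d})s$ \emph{before} invoking any quadratic-variation machinery --- is the right repair, and you flag the ordering problem yourself.

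The substantive gap is in the hard implication: you never supply the idea that makes the degenerate case work, and the remedies you do propose would fail. When $\Gamma$ is rank-deficient (e.g.\ $\underline{\sigma}^{2}=0$ in $d=1$), the solution $u^{\varphi}$ of the $G$-heat equation need not be $C^{1,2}$ even in the interior; this is a failure of ellipticity of the \emph{equation}, not of the data, so mollifying $\varphi$ or ``using interior estimates away from $t=0$'' does not help, and the obstruction has nothing to do with freezing $A$ or with $\langle M\rangle$ lacking a density (freezing $A=D_{x}^{2}u$ at a point is handled by Proposition \ref{newpro01} regardless of degeneracy). The paper's device is to regularize the \emph{operator}: adjoin an independent classical Brownian motion $W$ living on an auxiliary linear expectation space, form the discrete product space of Lemma \ref{newlem1} along the partition (there being no canonical continuous-time product of a general consistent sublinear space with a linear one), and replace $M$ by $M^{\varepsilon}=M+\varepsilon W$, which by Lemma \ref{newlem2} is a discrete symmetric martingale whose associated generator $G_{\varepsilon}(a)=G(a)+\frac{1}{2}\varepsilon^{2}a$ is uniformly nondegenerate. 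One then Taylor-expands $v^{\varepsilon}(t_{i}^{n},M_{t_{i}^{n}}^{\varepsilon,t})$ for the $C^{1+\alpha/2,2+\alpha}$ solution $v^{\varepsilon}$ of the $G_{\varepsilon}$-equation with terminal time $T+h$, controls the remainder with the third-moment hypothesis exactly as you describe, and finally sends $n\rightarrow\infty$, then $h\rightarrow0$, then $\varepsilon\rightarrow0$ using the comparison $|u^{\varepsilon}(t,x)-u(t,x)|\leq C_{\varphi}\sqrt{2(T-t)/\pi}\,\varepsilon$ of Lemma \ref{newlem3}. Without this three-parameter limit your telescoping sum cannot be closed in the degenerate case.
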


In order to prove the above theorem, we need the following lemmas. For
simplicity, we only prove the theorem for the case $d=1$, and the proof for
$d>1$ is the same. Under the case $d=1$,%
\begin{equation}
G(a)=\frac{1}{2}(\bar{\sigma}^{2}a^{+}-\underline{\sigma}^{2}a^{-})\text{ for
}a\in \mathbb{R}, \label{neweq-3}%
\end{equation}
where $0\leq \underline{\sigma}^{2}\leq \bar{\sigma}^{2}<\infty$. We suppose
$\bar{\sigma}^{2}>0$, since $M\equiv0$ if $\bar{\sigma}^{2}=0$, which is trivial.

The main idea to prove the above main theorem is to use PDE method, which
needs that the solution $u$ for $G$-heat equation (\ref{GPDE}) belongs to
$C^{1,2}$. But, when $\underline{\sigma}^{2}=0$, $u$ may not be in $C^{1,2}$.
So we introduce the following auxiliary space.

Let $(\bar{\Omega},\mathbb{L}^{1}(\bar{\Omega}),\mathbb{\bar{E}})$ be a
complete linear expectation space and $(W_{t})_{t\geq0}$ be a standard
$1$-dimensional classical Brownian motion on it. Define
\[
\tilde{\Omega}:=\{ \tilde{\omega}=(\omega,\bar{\omega}):\omega \in \Omega
,\bar{\omega}\in \bar{{\Omega}}\}.
\]
Let $T>0$ be fixed. For any given partition $\pi=\{t_{0,}\ldots,t_{n}\}$ of
$[0,T]$, i.e., $0=t_{0}<\cdots<t_{n}=T$, set $\mathcal{\tilde{H}}_{0}^{\pi
}:=\mathbb{R}$ and for $i=1,\ldots,n$,%
\begin{equation}
\mathcal{\tilde{H}}_{t_{i}}^{\pi}:=\{ \xi(\tilde{\omega})=\phi(\omega
,W_{t_{1}}(\bar{\omega}),\cdots,W_{t_{i}}(\bar{\omega})-W_{t_{i-1}}%
(\bar{\omega})):\tilde{\omega}=(\omega,\bar{\omega})\in \tilde{\Omega},\phi \in
L_{t_{i}}^{\pi}\}, \label{neweq-3-3}%
\end{equation}
where $L_{t_{i}}^{\pi}$ is the space of functions $\phi:\Omega \times
\mathbb{R}^{i}\rightarrow \mathbb{R}$ satisfying the following properties:

\begin{description}
\item[(1)] $\phi$ is bounded and $\phi(\cdot,x)\in \mathbb{L}^{1}(\Omega
_{t_{i}})$ for each $x\in \mathbb{R}^{i}$;

\item[(2)] There exists a constant $K>0$ such that $|\phi(\omega
,x)-\phi(\omega,x^{\prime})|\leq K|x-x^{\prime}|$ for each $\omega \in \Omega$,
$x,x^{\prime}\in \mathbb{R}^{i}$.
\end{description}

It is easy to verify that $\mathcal{\tilde{H}}_{t_{i}}^{\pi}\subset
\mathcal{\tilde{H}}_{t_{j}}^{\pi}$ for $0\leq i<j\leq n$. Now we want to
define a sublinear conditional expectation $\mathbb{\tilde{E}}_{t_{i}}^{\pi
}:\mathcal{\tilde{H}}_{t_{n}}^{\pi}\rightarrow \mathcal{\tilde{H}}_{t_{i}}%
^{\pi}$ such that $(\tilde{\Omega},(\mathcal{\tilde{H}}_{t_{i}}^{\pi}%
)_{i=0}^{n},(\mathbb{\tilde{E}}_{t_{i}}^{\pi})_{i=0}^{n})$ forms a discrete
consistent sublinear expectation space. For this purpose, we first define the
operator $\mathbb{\tilde{E}}_{t_{i},t_{i+1}}^{\pi}:\mathcal{\tilde{H}%
}_{t_{i+1}}^{\pi}\rightarrow \mathcal{\tilde{H}}_{t_{i}}^{\pi}$, $i=0,\ldots
,n-1$. For each $\xi(\tilde{\omega})=\phi(\omega,W_{t_{1}}(\bar{\omega
}),\cdots,W_{t_{i+1}}(\bar{\omega})-W_{t_{i}}(\bar{\omega}))\in \mathcal{\tilde
{H}}_{t_{i+1}}^{\pi}$, define%
\begin{equation}
\mathbb{\tilde{E}}_{t_{i},t_{i+1}}^{\pi}[\xi](\tilde{\omega}):=\varphi
(\omega,W_{t_{1}}(\bar{\omega}),\cdots,W_{t_{i}}(\bar{\omega})-W_{t_{i-1}%
}(\bar{\omega})), \label{neweq-3-4}%
\end{equation}
where%
\begin{equation}
\varphi(\cdot,x_{1},\cdots,x_{i}):=\hat{\mathbb{E}}_{t_{i}}[\psi(\cdot
,x_{1},\cdots,x_{i})] \label{neweq-4}%
\end{equation}
and%
\begin{equation}
\psi(\omega,x_{1},\cdots,x_{i})=\bar{\mathbb{E}}[\phi(\omega,x_{1}%
,\cdots,x_{i},W_{t_{i+1}}-W_{t_{i}})]\text{ for }\omega \in \Omega.
\label{neweq-5}%
\end{equation}
In the following, we will show that $\mathbb{\tilde{E}}_{t_{i},t_{i+1}}^{\pi
}[\xi]\in \mathcal{\tilde{H}}_{t_{i}}^{\pi}$. Now we define $\mathbb{\tilde{E}%
}_{t_{i}}^{\pi}:\mathcal{\tilde{H}}_{t_{n}}^{\pi}\rightarrow \mathcal{\tilde
{H}}_{t_{i}}^{\pi}$ backwardly from $i=n$ to $i=0$ as follows: for each
$X\in \mathcal{\tilde{H}}_{t_{n}}^{\pi}$, define%
\begin{equation}
\mathbb{\tilde{E}}_{t_{i}}^{\pi}[X]:=\mathbb{\tilde{E}}_{t_{i},t_{i+1}}^{\pi
}[\mathbb{\tilde{E}}_{t_{i+1}}^{\pi}[X]], \ i=0,\ldots
,n-1, \label{neweq-6}%
\end{equation}
where%
\[
\mathbb{\tilde{E}}_{t_{n}}^{\pi}[X]=X.
\]

\begin{lemma}
\label{newlem1} Let $(\mathcal{\tilde{H}}_{t_{i}}^{\pi})_{i=0}^{n}$ be defined
in (\ref{neweq-3-3}) and $(\mathbb{\tilde{E}}_{t_{i}}^{\pi})_{i=0}^{n}$ be
defined in (\ref{neweq-6}). Then $(\tilde{\Omega},(\mathcal{\tilde{H}}_{t_{i}%
}^{\pi})_{i=0}^{n},(\mathbb{\tilde{E}}_{t_{i}}^{\pi})_{i=0}^{n})$ forms a
discrete consistent sublinear expectation space. Furthermore, if $\xi
\in \mathcal{\tilde{H}}_{t_{n}}^{\pi}$ is independent of $\bar{\omega}$ (resp.
$\omega$), then $\mathbb{\tilde{E}}_{t_{i}}^{\pi}[\xi](\omega,\bar{\omega
})=\hat{\mathbb{E}}_{t_{i}}[\xi](\omega)$ (resp. $\mathbb{\tilde{E}}_{t_{i}%
}^{\pi}[\xi](\omega,\bar{\omega})=\bar{\mathbb{E}}_{t_{i}}[\xi](\bar{\omega})$).
\end{lemma}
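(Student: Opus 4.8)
### Proof Proposal for Lemma \ref{newlem1}

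The plan is to verify the five axioms of a discrete consistent sublinear expectation space one by one, reducing everything to the one-step operator $\mathbb{\tilde{E}}_{t_i,t_{i+1}}^\pi$, and then establish the factorization statement. First I would check that the one-step operator $\mathbb{\tilde{E}}_{t_i,t_{i+1}}^\pi$ is well defined, i.e., that $\mathbb{\tilde{E}}_{t_i,t_{i+1}}^\pi[\xi]\in\mathcal{\tilde{H}}_{t_i}^\pi$ whenever $\xi\in\mathcal{\tilde{H}}_{t_{i+1}}^\pi$. Given $\xi(\tilde\omega)=\phi(\omega,W_{t_1},\ldots,W_{t_{i+1}}-W_{t_i})$ with $\phi\in L_{t_{i+1}}^\pi$, I would show that $\psi$ defined by $\psi(\omega,x_1,\ldots,x_i)=\bar{\mathbb{E}}[\phi(\omega,x_1,\ldots,x_i,W_{t_{i+1}}-W_{t_i})]$ is bounded (clear, same bound as $\phi$), satisfies $\psi(\cdot,x)\in\mathbb{L}^1(\Omega_{t_{i+1}})\subset\mathbb{L}^1(\Omega)$ for each $x$ (one needs $\phi(\cdot,x,y)\in\mathbb{L}^1(\Omega_{t_{i+1}})$ for each $x,y$ and then integrate out $y$ against the classical Brownian increment; measurability in $\omega$ is preserved since $\bar{\mathbb{E}}$ acts only on the $\bar\omega$-coordinate), and is Lipschitz in $x$ with the same constant $K$ as $\phi$ (since $|\bar{\mathbb{E}}[\phi(\omega,x,W)]-\bar{\mathbb{E}}[\phi(\omega,x',W)]|\le\bar{\mathbb{E}}[|\phi(\omega,x,W)-\phi(\omega,x',W)|]\le K|x-x'|$). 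Then $\varphi(\cdot,x_1,\ldots,x_i)=\hat{\mathbb{E}}_{t_i}[\psi(\cdot,x_1,\ldots,x_i)]$ is bounded, lies in $\mathbb{L}^1(\Omega_{t_i})$, and inherits the Lipschitz property in $x$ because $|\hat{\mathbb{E}}_{t_i}[\psi(\cdot,x)]-\hat{\mathbb{E}}_{t_i}[\psi(\cdot,x')]|\le\hat{\mathbb{E}}_{t_i}[|\psi(\cdot,x)-\psi(\cdot,x')|]$ and monotonicity gives the bound $K|x-x'|$ (after a standard argument using that $\hat{\mathbb{E}}_{t_i}$ applied to a constant-in-$\omega$ bound is that bound); hence $\varphi\in L_{t_i}^\pi$ and $\mathbb{\tilde{E}}_{t_i,t_{i+1}}^\pi[\xi]\in\mathcal{\tilde{H}}_{t_i}^\pi$. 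Consequently the backward recursion (\ref{neweq-6}) is well posed and $\mathbb{\tilde{E}}_{t_i}^\pi$ maps $\mathcal{\tilde{H}}_{t_n}^\pi$ into $\mathcal{\tilde{H}}_{t_i}^\pi$.

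Next I would verify properties (1)–(4) of Definition \ref{sublinear conditional expectation} for the one-step operator: monotonicity, constant preserving, sub-additivity, and positive homogeneity all follow immediately because $\mathbb{\tilde{E}}_{t_i,t_{i+1}}^\pi$ is the composition of two operations with these properties — first $\phi\mapsto\bar{\mathbb{E}}[\phi(\cdot,W_{t_{i+1}}-W_{t_i})]$, which is a linear expectation (hence monotone, linear, and constant-preserving in the sense that constants in $(\omega,x_1,\ldots,x_i)$ pass through, and positively homogeneous), and then $\psi\mapsto\hat{\mathbb{E}}_{t_i}[\psi]$, which satisfies (1)–(4) by hypothesis; for property (4) one uses that an $\eta\in\mathcal{\tilde{H}}_{t_i}^\pi$ is of the form $\eta=h(\omega,W_{t_1},\ldots,W_{t_i}-W_{t_{i-1}})$ with $h$ not depending on the later increment, so it factors out of $\bar{\mathbb{E}}$ and then out of $\hat{\mathbb{E}}_{t_i}$. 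For the multi-step operator $\mathbb{\tilde{E}}_{t_i}^\pi$, properties (1)–(4) then follow by downward induction on $i$, composing the one-step properties. The tower/consistency property $\mathbb{\tilde{E}}_{t_i}^\pi[\mathbb{\tilde{E}}_{t_j}^\pi[X]]=\mathbb{\tilde{E}}_{t_i}^\pi[X]$ for $i\le j$ is essentially the definition (\ref{neweq-6}) applied repeatedly: unwinding both sides gives the same iterated composition $\mathbb{\tilde{E}}_{t_i,t_{i+1}}^\pi\circ\cdots\circ\mathbb{\tilde{E}}_{t_{n-1},t_n}^\pi$, so consistency is automatic by associativity of composition.

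For the final factorization assertion, suppose first $\xi\in\mathcal{\tilde{H}}_{t_n}^\pi$ is independent of $\bar\omega$, i.e., $\xi=\phi(\omega)$ with $\phi$ bounded, in $\mathbb{L}^1(\Omega_{t_n})$, and (trivially) Lipschitz in the absent $W$-variables. Then in the one-step formula the inner step $\psi(\omega,x_1,\ldots,x_{n-1})=\bar{\mathbb{E}}[\phi(\omega)]=\phi(\omega)$ does nothing, so $\mathbb{\tilde{E}}_{t_{n-1},t_n}^\pi[\xi]=\hat{\mathbb{E}}_{t_{n-1}}[\phi]$, which is again independent of $\bar\omega$; iterating downward and using the tower property of $\hat{\mathbb{E}}_\cdot$ on $\Omega$ gives $\mathbb{\tilde{E}}_{t_i}^\pi[\xi](\omega,\bar\omega)=\hat{\mathbb{E}}_{t_i}[\phi](\omega)$. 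The case where $\xi$ is independent of $\omega$, i.e., $\xi=\phi(W_{t_1},\ldots,W_{t_n}-W_{t_{n-1}})$, is symmetric: now $\hat{\mathbb{E}}_{t_i}$ acts on a function not depending on $\omega$, hence acts as the identity (constant preserving), and $\bar{\mathbb{E}}$ combined with the recursion reproduces exactly the classical conditional expectation $\bar{\mathbb{E}}_{t_i}$ of Brownian motion; again downward induction closes the argument.

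I expect the main obstacle to be the well-definedness step — specifically, checking carefully that the Lipschitz constant and the $\mathbb{L}^1(\Omega_{t_i})$-membership are genuinely preserved under the composition, since this requires combining the $\bar\omega$-integration with the sublinear conditional expectation $\hat{\mathbb{E}}_{t_i}$ and invoking the continuous extension of $\hat{\mathbb{E}}_{t_i}$ to $\mathbb{L}^1$, together with a uniform-in-$x$ bound so that the resulting $\varphi(\cdot,x)$ depends measurably and Lipschitz-continuously on $x$; once the spaces $\mathcal{\tilde{H}}_{t_i}^\pi$ are shown stable, all the algebraic axioms are routine consequences of the corresponding properties of $\hat{\mathbb{E}}_\cdot$ and $\bar{\mathbb{E}}$.
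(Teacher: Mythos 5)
Your overall architecture matches the paper's: the only non-routine point is the well-definedness of the one-step operator $\mathbb{\tilde{E}}_{t_{i},t_{i+1}}^{\pi}$, and once the spaces $\mathcal{\tilde{H}}_{t_{i}}^{\pi}$ are shown stable, the axioms (1)--(5) and the two factorization statements follow by unwinding the backward recursion exactly as you describe. You also correctly flag the well-definedness step as the main obstacle. However, the justification you offer for that step does not work. You write that $\psi(\cdot,x)=\bar{\mathbb{E}}[\phi(\cdot,x,W_{t_{i+1}}-W_{t_{i}})]\in\mathbb{L}^{1}(\Omega_{t_{i+1}})$ because ``measurability in $\omega$ is preserved since $\bar{\mathbb{E}}$ acts only on the $\bar\omega$-coordinate.'' But $\mathbb{L}^{1}(\Omega_{t_{i+1}})$ is not a space of measurable functions: in this framework $\Omega$ is an arbitrary set and $\mathbb{L}^{1}(\Omega_{t_{i+1}})$ is the abstract completion of $\mathcal{H}_{t_{i+1}}$ under the sublinear $\mathbb{L}^{1}$-norm, so there is no measurable structure to appeal to, and membership in the completion is not preserved by generic pointwise operations. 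This is precisely the point the paper's entire proof is devoted to.

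The missing idea is a quantitative approximation: partition the range of the Brownian increment into the points $x_{j}^{N}=-N+2j/N$, and approximate the Gaussian integral by the finite sum $\sum_{j}a_{j}^{N}\phi(\cdot,x,x_{j}^{N})$ with $a_{j}^{N}=\bar{\mathbb{E}}[I_{[x_{j}^{N},x_{j+1}^{N})}(W_{t_{i+1}}-W_{t_{i}})]$. The Lipschitz bound $K$ on $\phi$ in its last argument controls the error on $[-N,N]$ by $2K/N$, and the uniform bound $L$ on $\phi$ together with Markov's inequality controls the tail by $\frac{L}{N}\bar{\mathbb{E}}[|W_{t_{i+1}}-W_{t_{i}}|]$, giving an error that is uniform in $\omega$ and tends to $0$ as $N\to\infty$. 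Each slice $\phi(\cdot,x,x_{j}^{N})$ lies in $\mathbb{L}^{1}(\Omega_{t_{i+1}})$ by the definition of $L_{t_{i+1}}^{\pi}$, finite linear combinations stay in that linear space, and the completion is closed under uniform (hence $\mathbb{L}^{1}$) limits; this is what actually puts $\psi(\cdot,x)$ into $\mathbb{L}^{1}(\Omega_{t_{i+1}})$. Your remaining steps (Lipschitz continuity of $\psi$ and $\varphi$ in $x$, the inductive verification of the axioms, and the two degenerate cases of the factorization) are fine once this approximation argument is inserted.
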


\begin{proof}
We only prove that $\mathbb{\tilde{E}}_{t_{i},t_{i+1}}^{\pi}$ in
(\ref{neweq-3-4}) is well-defined, the other properties can be easily
verified. For this, we only need to show that $\psi(\cdot,x)\in \mathbb{L}%
^{1}(\Omega_{t_{i+1}})$ for each $x=(x_{1},\cdots,x_{i})\in \mathbb{R}^{i}$.
For each given integer $N>0$, set $x_{j}^{N}=-N+\frac{2j}{N}$, $j=0,\ldots
,N^{2}$. Then, for each $\omega \in \Omega$,%
\begin{align*}
&  \left \vert \bar{\mathbb{E}}[\phi(\omega,x,W_{t_{i+1}}-W_{t_{i}}%
)]-\bar{\mathbb{E}}\left[  \sum_{j=0}^{N^{2}-1}\phi(\omega,x,x_{j}%
^{N})I_{[x_{j}^{N},x_{j+1}^{N})}(W_{t_{i+1}}-W_{t_{i}})\right]  \right \vert \\
&  \leq \frac{2K}{N}+L\bar{\mathbb{E}}[I_{[N,\infty)}(|W_{t_{i+1}}-W_{t_{i}%
}|)]\\
&  \leq \frac{2K}{N}+\frac{L}{N}\bar{\mathbb{E}}[|W_{t_{i+1}}-W_{t_{i}}|],
\end{align*}
where $L>0$ such that $|\phi|\leq L$. From this, we can deduce that%
\begin{equation}
\left \vert \psi(\cdot,x)-\sum_{j=0}^{N^{2}-1}a_{j}^{N}\phi(\cdot,x,x_{j}%
^{N})\right \vert \leq \frac{2K}{N}+\frac{L}{N}\bar{\mathbb{E}}[|W_{t_{i+1}%
}-W_{t_{i}}|], \label{neweq-7}%
\end{equation}
where $a_{j}^{N}=\bar{\mathbb{E}}[I_{[x_{j}^{N},x_{j+1}^{N})}(W_{t_{i+1}%
}-W_{t_{i}})]$. Noting that $\phi(\cdot,x_{1},\cdots,x_{i},x_{j}^{N}%
)\in \mathbb{L}^{1}(\Omega_{t_{i+1}})$, we obtain $\psi(\cdot,x_{1}%
,\cdots,x_{i})\in \mathbb{L}^{1}(\Omega_{t_{i+1}})$ by (\ref{neweq-7}).
\end{proof}

The completion of $(\tilde{\Omega},(\mathcal{\tilde{H}}_{t_{i}}^{\pi}%
)_{i=0}^{n},(\mathbb{\tilde{E}}_{t_{i}}^{\pi})_{i=0}^{n})$ is denoted by
$(\tilde{\Omega},(\mathbb{L}^{1}(\tilde{\Omega}_{t_{i}}^{\pi}))_{i=0}%
^{n},(\mathbb{\tilde{E}}_{t_{i}}^{\pi})_{i=0}^{n})$.

\begin{lemma}
\label{newlem2} For each $\varepsilon \in(0,1)$, set%
\[
M_{t}^{\varepsilon}(\omega,\bar{\omega}):=M_{t}(\omega)+\varepsilon W_{t}%
(\bar{\omega})\text{ for }t\geq0,(\omega,\bar{\omega})\in \tilde{\Omega
}\text{.}%
\]
If $\frac{1}{2}a(M_{t})^{2}-G(a)t$, $t\geq0$, is a martingale for each
$a\in \mathbb{R}$, then, under $(\tilde{\Omega},(\mathbb{L}^{1}(\tilde{\Omega
}_{t_{i}}^{\pi}))_{i=0}^{n},(\mathbb{\tilde{E}}_{t_{i}}^{\pi})_{i=0}^{n})$, we have

\begin{description}
\item[(1)] $(M_{t_{i}}^{\varepsilon})_{i=0}^{n}$ is a discrete symmetric martingale;

\item[(2)] $\left(  \frac{1}{2}a(M_{t_{i}}^{\varepsilon})^{2}-G_{\varepsilon
}(a)t_{i}\right)  _{i=0}^{n}$ is a discrete martingale, where $G_{\varepsilon
}(a)=G(a)+\frac{1}{2}\varepsilon^{2}a$ for $a\in \mathbb{R}$.
\end{description}
\end{lemma}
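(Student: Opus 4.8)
The plan is to verify the two stated properties directly from the definitions of $M_t^\varepsilon$ and of the discrete conditional expectation $\mathbb{\tilde{E}}_{t_i}^\pi$, exploiting the product structure of $\tilde\Omega$. Throughout I would use the key special property from Lemma \ref{newlem1}: if a random variable on $\tilde\Omega$ depends only on $\omega$ then $\mathbb{\tilde{E}}_{t_i}^\pi$ acts as $\hat{\mathbb{E}}_{t_i}$, and if it depends only on $\bar\omega$ then it acts as the classical conditional expectation $\bar{\mathbb{E}}_{t_i}$ of the Brownian motion. The main technical device is Proposition \ref{mean certainty}: since $W_t$ (being a classical Brownian increment) is symmetric under $\bar{\mathbb{E}}$ and $M_t$ is symmetric under $\hat{\mathbb{E}}$, sums of such terms behave additively under the conditional expectation.

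For part (1), I would write $M_{t_{i+1}}^\varepsilon - M_{t_i}^\varepsilon = (M_{t_{i+1}} - M_{t_i}) + \varepsilon(W_{t_{i+1}} - W_{t_i})$. The first term depends only on $\omega$ and the second only on the increment $\bar\omega$-variable at step $i+1$; moreover the second is independent of $\mathcal{\tilde{H}}_{t_i}^\pi$ in the product sense. Applying the definition \eqref{neweq-6}–\eqref{neweq-5} of $\mathbb{\tilde{E}}_{t_i,t_{i+1}}^\pi$ one first integrates out $W_{t_{i+1}} - W_{t_i}$ under $\bar{\mathbb{E}}$, which kills the $\varepsilon W$-increment (it has classical mean zero), and then applies $\hat{\mathbb{E}}_{t_i}$ to the $M$-part, using the martingale property $\hat{\mathbb{E}}_{t_i}[M_{t_{i+1}} - M_{t_i}] = 0$. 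Symmetry follows the same way since the same computation applies to $-(M_{t_{i+1}}^\varepsilon - M_{t_i}^\varepsilon)$, giving $\mathbb{\tilde{E}}_{t_i}^\pi[M_{t_n}^\varepsilon] = M_{t_i}^\varepsilon = -\mathbb{\tilde{E}}_{t_i}^\pi[-M_{t_n}^\varepsilon]$; the martingale tower property is already guaranteed by Lemma \ref{newlem1}.

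For part (2) I would expand
\[
\tfrac{1}{2}a (M_{t_{i+1}}^\varepsilon)^2 - \tfrac{1}{2} a (M_{t_i}^\varepsilon)^2 = \tfrac{1}{2} a (M_{t_{i+1}}^\varepsilon - M_{t_i}^\varepsilon)^2 + a M_{t_i}^\varepsilon (M_{t_{i+1}}^\varepsilon - M_{t_i}^\varepsilon),
\]
and compute $\mathbb{\tilde{E}}_{t_i}^\pi$ of the right-hand side. In the cross term $a M_{t_i}^\varepsilon (M_{t_{i+1}}^\varepsilon - M_{t_i}^\varepsilon)$, the factor $M_{t_i}^\varepsilon \in \mathcal{\tilde{H}}_{t_i}^\pi$ can be pulled out via Proposition \ref{mean certainty} (using that the increment $M_{t_{i+1}}^\varepsilon - M_{t_i}^\varepsilon$ has symmetric conditional expectation by part (1)), and what remains vanishes. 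For the square term, expand $(M_{t_{i+1}}^\varepsilon - M_{t_i}^\varepsilon)^2 = (M_{t_{i+1}} - M_{t_i})^2 + 2\varepsilon(M_{t_{i+1}} - M_{t_i})(W_{t_{i+1}} - W_{t_i}) + \varepsilon^2 (W_{t_{i+1}} - W_{t_i})^2$. Integrating out the Brownian increment under $\bar{\mathbb{E}}$ first: the cross term has classical mean zero and disappears, the last term contributes $\varepsilon^2 (t_{i+1} - t_i)$, a constant; then applying $\hat{\mathbb{E}}_{t_i}$ to the $\omega$-dependent piece and using the hypothesis that $\tfrac12 a M_t^2 - G(a)t$ is an $\hat{\mathbb{E}}$-martingale gives $\hat{\mathbb{E}}_{t_i}[\tfrac12 a (M_{t_{i+1}} - M_{t_i})^2] = G(a)(t_{i+1} - t_i)$ (here one must be careful: this martingale property plus the cross-term computation yields $\hat{\mathbb{E}}_{t_i}[\tfrac12 a M_{t_{i+1}}^2 - \tfrac12 a M_{t_i}^2] = G(a)(t_{i+1}-t_i)$ and the cross term $\hat{\mathbb{E}}_{t_i}[a M_{t_i}(M_{t_{i+1}} - M_{t_i})] = 0$ by symmetry, leaving exactly the squared-increment expectation). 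Collecting, $\mathbb{\tilde{E}}_{t_i}^\pi[\tfrac12 a (M_{t_{i+1}}^\varepsilon)^2] - \tfrac12 a (M_{t_i}^\varepsilon)^2 = (G(a) + \tfrac12 \varepsilon^2 a)(t_{i+1} - t_i) = G_\varepsilon(a)(t_{i+1} - t_i)$, which is the one-step martingale property; iterating with the tower property of Lemma \ref{newlem1} finishes the proof.

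The main obstacle I anticipate is bookkeeping rather than conceptual: one must justify interchanging $\bar{\mathbb{E}}$ and $\hat{\mathbb{E}}_{t_i}$ in the right order inside $\mathbb{\tilde{E}}_{t_i,t_{i+1}}^\pi$ so that the Brownian increment is genuinely integrated out as a classical (mean-zero, known-variance) random variable independent of everything at time $t_i$, and must handle the product of the $\omega$-dependent $M$-increment and the $\bar\omega$-dependent $W$-increment carefully — the crucial point is that the definition \eqref{neweq-5} integrates $\phi(\omega, x, W_{t_{i+1}} - W_{t_i})$ over the Brownian increment for each frozen $\omega$, so that any term linear in $W_{t_{i+1}} - W_{t_i}$ (even with an $\omega$-dependent coefficient) drops out before $\hat{\mathbb{E}}_{t_i}$ is applied. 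Care is also needed to confirm $M_t^\varepsilon$ is genuinely an element of the completed space $\mathbb{L}^1(\tilde\Omega_{t_i}^\pi)$ so that $\mathbb{\tilde{E}}_{t_i}^\pi$ is defined on it — but this follows since $M_t \in \mathbb{L}^3(\Omega_t) \subset \mathbb{L}^1(\Omega_t)$ and $W_t$ is classically integrable, both approximable by the relevant elementary random variables.
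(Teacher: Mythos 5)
Your proposal is correct and follows essentially the same route as the paper: part (1) by integrating out the mean-zero Brownian increment under $\bar{\mathbb{E}}$ and using the martingale property of $M$ under $\hat{\mathbb{E}}_{t_i}$, and part (2) by the decomposition into cross term (killed via symmetry and Proposition \ref{mean certainty}) plus squared increment, with the hypothesis yielding $\hat{\mathbb{E}}_{t_i}[a(M_{t_{i+1}}-M_{t_i})^2]=2G(a)(t_{i+1}-t_i)$ and the Brownian part contributing $\varepsilon^2(t_{i+1}-t_i)$. No substantive differences from the paper's argument.
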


\begin{proof}
(1) By the definition of $\mathbb{\tilde{E}}_{t_{i}}^{\pi}$, we have, for
$i=0,\ldots,n-1$,%
\[
\mathbb{\tilde{E}}_{t_{i}}^{\pi}[M_{t_{i+1}}^{\varepsilon}](\omega,\bar
{\omega})=\hat{\mathbb{E}}_{t_{i}}[M_{t_{i+1}}](\omega)+\varepsilon
\bar{\mathbb{E}}[W_{t_{i+1}}-W_{t_{i}}]+\varepsilon W_{t_{i}}(\bar{\omega
})=M_{t_{i}}^{\varepsilon}(\omega,\bar{\omega})
\]
and%
\[
\mathbb{\tilde{E}}_{t_{i}}^{\pi}[-M_{t_{i+1}}^{\varepsilon}](\omega
,\bar{\omega})=\hat{\mathbb{E}}_{t_{i}}[-M_{t_{i+1}}](\omega)+\varepsilon
\bar{\mathbb{E}}[-(W_{t_{i+1}}-W_{t_{i}})]-\varepsilon W_{t_{i}}(\bar{\omega
})=-M_{t_{i}}^{\varepsilon}(\omega,\bar{\omega}).
\]
Thus $(M_{t_{i}}^{\varepsilon})_{i=0}^{n}$ is a discrete symmetric martingale.

(2) Since $\frac{1}{2}a(M_{t})^{2}-G(a)t$, $t\geq0$, is a martingale, we have%
\[
\hat{\mathbb{E}}_{t_{i}}[a(M_{t_{i+1}})^{2}-a(M_{t_{i}})^{2}]=2G(a)(t_{i+1}%
-t_{i}).
\]
By the fact that $(M_{t})_{t\geq0}$ is a symmetric martingale, we also get%
\[
\hat{\mathbb{E}}_{t_{i}}[a(M_{t_{i+1}})^{2}-a(M_{t_{i}})^{2}]=\hat{\mathbb{E}%
}_{t_{i}}[a(M_{t_{i+1}}-M_{t_{i}})^{2}].
\]
Thus%
\[
\hat{\mathbb{E}}_{t_{i}}[a(M_{t_{i+1}}-M_{t_{i}})^{2}]=2G(a)(t_{i+1}-t_{i}).
\]
It follows from (1) and the definition of $\mathbb{\tilde{E}}_{t_{i}}^{\pi}$
that%
\[
\mathbb{\tilde{E}}_{t_{i}}^{\pi}[M_{t_{i}}^{\varepsilon}(M_{t_{i+1}%
}^{\varepsilon}-M_{t_{i}}^{\varepsilon})]=\mathbb{\tilde{E}}_{t_{i}}^{\pi
}[-M_{t_{i}}^{\varepsilon}(M_{t_{i+1}}^{\varepsilon}-M_{t_{i}}^{\varepsilon
})]=0
\]
and%
\begin{align*}
\mathbb{\tilde{E}}_{t_{i}}^{\pi}[a(M_{t_{i+1}}^{\varepsilon}-M_{t_{i}%
}^{\varepsilon})^{2}](\omega,\bar{\omega})  &  =\hat{\mathbb{E}}_{t_{i}%
}[a(M_{t_{i+1}}-M_{t_{i}})^{2}](\omega)+\varepsilon^{2}\bar{\mathbb{E}%
}[(W_{t_{i+1}}-W_{t_{i}})^{2}]\\
&  =2G(a)(t_{i+1}-t_{i})+\varepsilon^{2}(t_{i+1}-t_{i}).
\end{align*}
Then%
\[
\mathbb{\tilde{E}}_{t_{i}}^{\pi}[a(M_{t_{i+1}}^{\varepsilon})^{2}]=a(M_{t_{i}%
}^{\varepsilon})^{2}+\mathbb{\tilde{E}}_{t_{i}}^{\pi}[a(M_{t_{i+1}%
}^{\varepsilon}-M_{t_{i}}^{\varepsilon})^{2}]=a(M_{t_{i}}^{\varepsilon}%
)^{2}+2G_{\varepsilon}(a)(t_{i+1}-t_{i}),
\]
which implies that $\left(  \frac{1}{2}a(M_{t_{i}}^{\varepsilon}%
)^{2}-G_{\varepsilon}(a)t_{i}\right)  _{i=0}^{n}$ is a discrete martingale.
\end{proof}

\begin{remark}
When $\underline{\sigma}^{2}=0$, $G_{\varepsilon}(a)=\frac{1}{2}[(\bar{\sigma
}^{2}+\varepsilon^{2})a^{+}-\varepsilon^{2}a^{-}]$ is non-degenerate. So we
can consider $M^{\varepsilon}$ in Theorem \ref{levy theorem}.
\end{remark}

Consider the following two PDEs:%
\begin{equation}
\partial_{t}u+G(\partial_{xx}^{2}u)=0,\ u(T,x)=\varphi(x), \label{neweq-8}%
\end{equation}%
\begin{equation}
\partial_{t}u^{\varepsilon}+G_{\varepsilon}(\partial_{xx}^{2}u^{\varepsilon
})=0,\ u^{\varepsilon}(T,x)=\varphi(x). \label{neweq-9}%
\end{equation}

\begin{lemma}
\label{newlem3} For each $\varepsilon \in(0,1)$ and $\varphi \in C_{b.Lip}%
(\mathbb{R})$, let $u$ and $u^{\varepsilon}$ be the solution of PDEs
(\ref{neweq-8}) and (\ref{neweq-9}) respectively. Then, for $t$, $t^{\prime
}\in \lbrack0,T]$, $x\in \mathbb{R}$,%
\[
|u^{\varepsilon}(t,x)-u^{\varepsilon}(t^{\prime},x)|\leq C_{\varphi}%
\sqrt{2(\bar{\sigma}^{2}+1)/\pi}\sqrt{|t-t^{\prime}|}%
\]
and%
\[
|u^{\varepsilon}(t,x)-u(t,x)|\leq C_{\varphi}\sqrt{2(T-t)/\pi}\varepsilon,
\]
where $C_{\varphi}$ is the Lipschitz constant of $\varphi$.
\end{lemma}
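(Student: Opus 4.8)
\textbf{Proof proposal for Lemma \ref{newlem3}.}
The plan is to exploit the fact that, because $G_\varepsilon$ is non-degenerate, the solutions $u^\varepsilon$ are classical and admit a stochastic (or representation) formula that makes the regularity estimates transparent; more elementarily, one can get both bounds directly from the comparison principle for the PDEs together with explicit barrier/auxiliary solutions. First I would record the Lipschitz estimate in the space variable: since the terminal data $\varphi$ is Lipschitz with constant $C_\varphi$, the function $x\mapsto u^\varepsilon(t,x+h)-u^\varepsilon(t,x)$ solves the same equation (the $G_\varepsilon$-heat equation is translation invariant) with terminal value bounded by $C_\varphi|h|$, so by comparison $\|u^\varepsilon(t,\cdot+h)-u^\varepsilon(t,\cdot)\|_\infty \le C_\varphi|h|$ uniformly in $t$ and $\varepsilon$. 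This Lipschitz-in-$x$ bound (with constant $C_\varphi$, independent of $\varepsilon$) is the engine for the other two estimates.

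For the time-continuity estimate, I would use the representation $u^\varepsilon(t,x) = \sup_{\theta}\bar{\mathbb{E}}[\varphi(x + \int_t^T \theta_s\,dW_s)]$ coming from $G_\varepsilon(a)=\tfrac12\sup_{\gamma\in\Gamma_\varepsilon}\gamma a$ with $\Gamma_\varepsilon=[\underline\sigma^2+\varepsilon^2,\ \bar\sigma^2+\varepsilon^2]$; alternatively, one can work purely PDE-theoretically. Given $t<t'$, writing $u^\varepsilon(t,x)$ via the dynamic programming principle as $u^\varepsilon(t,x)=\sup_\theta \bar{\mathbb{E}}[u^\varepsilon(t',\, x+\int_t^{t'}\theta_s\,dW_s)]$ and using the Lipschitz-in-$x$ bound, one gets
\[
|u^\varepsilon(t,x)-u^\varepsilon(t',x)|\le C_\varphi \sup_\theta \bar{\mathbb{E}}\Big[\Big|\int_t^{t'}\theta_s\,dW_s\Big|\Big]\le C_\varphi \sqrt{(\bar\sigma^2+1)(t'-t)}\cdot\sqrt{2/\pi},
\]
where the last step bounds $\theta_s^2\le \bar\sigma^2+\varepsilon^2\le\bar\sigma^2+1$, uses $\bar{\mathbb{E}}|N(0,\sigma^2)|=\sigma\sqrt{2/\pi}$, and Jensen. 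This yields exactly the stated constant $C_\varphi\sqrt{2(\bar\sigma^2+1)/\pi}\sqrt{|t-t'|}$.

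For the last estimate, the comparison is between $u^\varepsilon$ and $u$, whose generators differ by $G_\varepsilon(a)-G(a)=\tfrac12\varepsilon^2 a$. I would introduce an auxiliary function, e.g. $w(t,x):=u(t,x)+C_\varphi\sqrt{2(T-t)/\pi}\,\varepsilon$, and check that $w$ is a supersolution of the $G_\varepsilon$-equation with $w(T,x)=\varphi(x)$: since $u\in C^{1,2}$ in the non-degenerate case, $\partial_t w + G_\varepsilon(\partial_{xx}^2 w) = \partial_t u + G(\partial_{xx}^2 u) + \tfrac12\varepsilon^2\partial_{xx}^2 u - C_\varphi\varepsilon\cdot\frac{d}{dt}\sqrt{2(T-t)/\pi}$, and one checks the $\varepsilon^2$-term is dominated by the derivative of the correction because $|\partial_{xx}^2 u|$ is controlled — actually the clean route is again probabilistic: $u(t,x)=\sup_\theta\bar{\mathbb{E}}[\varphi(x+\int_t^T\theta\,dW)]$ with $\theta^2\in[\underline\sigma^2,\bar\sigma^2]$ and $u^\varepsilon(t,x)=\sup_{\theta}\bar{\mathbb{E}}[\varphi(x+\int_t^T\theta^\varepsilon\,dW)]$ with $(\theta^\varepsilon)^2=\theta^2+\varepsilon^2$; coupling the two controls on the same $W$ and using the Lipschitz bound on $\varphi$ gives $|u^\varepsilon(t,x)-u(t,x)|\le C_\varphi\sup_\theta\bar{\mathbb{E}}[|\int_t^T(\sqrt{\theta^2+\varepsilon^2}-\theta)\,dW|]\le C_\varphi\sqrt{(T-t)}\cdot\sup|\sqrt{\theta^2+\varepsilon^2}-\theta|\cdot\sqrt{2/\pi}$, and $\sqrt{\theta^2+\varepsilon^2}-\theta\le\varepsilon$, giving the bound $C_\varphi\sqrt{2(T-t)/\pi}\,\varepsilon$.

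The main obstacle is making the representation-formula shortcut rigorous — i.e., justifying the dynamic programming principle and the stochastic control representation for $u$ and $u^\varepsilon$ in this sublinear PDE setting, including the coupling argument on a common Brownian motion. If one prefers to avoid that, the purely PDE-based route via comparison with explicit barriers is equally viable but requires carefully verifying that the candidate sub/supersolutions (which involve the non-smooth $\sqrt{T-t}$ term) can be used in the comparison principle, e.g. by a standard doubling-of-variables or vanishing-viscosity argument; this is where the write-up will need the most care, though all the estimates themselves are one-liners once the framework is in place.
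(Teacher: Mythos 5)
Your proposal is essentially correct and rests on the same underlying idea as the paper -- couple the $G$- and $G_{\varepsilon}$-dynamics on one space so that the difference reduces to an $\varepsilon$-scaled classical Brownian motion -- but the implementation differs. The paper works entirely inside the sublinear-expectation framework: it constructs a $2$-dimensional $\tilde{G}$-Brownian motion $(B^{1},B^{2})$ with $\tilde{G}\left(\left[\begin{smallmatrix}a&b\\b&c\end{smallmatrix}\right]\right)=G(a)+\frac{1}{2}c$, so that $B^{1}$ is a $G$-Brownian motion, $B^{1}+\varepsilon B^{2}$ is a $G_{\varepsilon}$-Brownian motion, and $B^{2}$ is a classical Brownian motion; then $u(t,x)=\hat{\mathbb{E}}[\varphi(x+B_{T}^{1}-B_{t}^{1})]$, $u^{\varepsilon}(t,x)=\hat{\mathbb{E}}[\varphi(x+B_{T}^{1}+\varepsilon B_{T}^{2}-B_{t}^{1}-\varepsilon B_{t}^{2})]$, and both estimates follow in one line from $|\hat{\mathbb{E}}[X]-\hat{\mathbb{E}}[Y]|\leq\hat{\mathbb{E}}[|X-Y|]$ and $\hat{\mathbb{E}}[|\varepsilon(B_{T}^{2}-B_{t}^{2})|]=\varepsilon\sqrt{2(T-t)/\pi}$. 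This buys exactly what you identify as your main obstacle: there is no need to justify the stochastic control representation, the dynamic programming principle, or the pathwise coupling of controls on a common $W$ -- the coupling is built into the joint $\tilde{G}$-normal distribution. Two smaller points on your version: first, to obtain the factor $\sqrt{2/\pi}$ you cannot use only Cauchy--Schwarz/Jensen (that gives $\sqrt{(\bar{\sigma}^{2}+1)(t'-t)}$ without the $\sqrt{2/\pi}$); you need that $\sup_{\theta}\bar{\mathbb{E}}[|\int\theta\,dW|]$ over adapted $\theta$ with $|\theta|\leq\kappa$ equals the Gaussian value $\kappa\sqrt{2(t'-t)/\pi}$, which follows from convexity of $|\cdot|$ (or a time-change plus optional-stopping argument) -- this should be stated, and the same remark applies to $\sup_{\theta}\bar{\mathbb{E}}[|\int(\sqrt{\theta^{2}+\varepsilon^{2}}-\theta)\,dW|]$. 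Second, your PDE-barrier alternative for the third estimate cannot work as stated in the degenerate case $\underline{\sigma}^{2}=0$, since then $u$ need not be $C^{1,2}$ -- that lack of regularity is precisely why the lemma and the whole $\varepsilon$-perturbation exist; fortunately you discard that route in favor of the probabilistic coupling, which is sound.
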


\begin{proof}
Let $(B_{t}^{1},B_{t}^{2})_{t\geq0}$ be a $2$-dimensional $\tilde{G}$-Brownian
motion on a complete sublinear expectation space $(\Omega,\mathbb{L}%
^{1}(\Omega),\hat{\mathbb{E}})$ with
\[
\tilde{G}\left(  \left[
\begin{array}
[c]{cc}%
a & b\\
b & c
\end{array}
\right]  \right)  =G(a)+\frac{1}{2}c\text{ for }a,b,c\in \mathbb{R}.
\]
It is easy to verify that $(B_{t}^{1})_{t\geq0}$ is a $1$-dimensional
$G$-Brownian motion and $(B_{t}^{1}+\varepsilon B_{t}^{2})_{t\geq0}$ is a
$1$-dimensional $G_{\varepsilon}$-Brownian motion. By Remark \ref{newre-000},
we deduce that, for $(t,x)\in \lbrack0,T]\times \mathbb{R}$,%
\[
u(t,x)=\hat{\mathbb{E}}[\varphi(x+B_{T}^{1}-B_{t}^{1})]\text{ and
}u^{\varepsilon}(t,x)=\hat{\mathbb{E}}[\varphi(x+B_{T}^{1}+\varepsilon
B_{T}^{2}-B_{t}^{1}-\varepsilon B_{t}^{2})].
\]
Thus, by Proposition 1.5 in Chapter III of \cite{Peng 1}, we obtain%
\[
|u^{\varepsilon}(t,x)-u^{\varepsilon}(t^{\prime},x)|\leq C_{\varphi}%
\hat{\mathbb{E}}[|B_{t}^{1}+\varepsilon B_{t}^{2}-B_{t^{\prime}}%
^{1}-\varepsilon B_{t^{\prime}}^{2}|]=C_{\varphi}\sqrt{2(\bar{\sigma}%
^{2}+\varepsilon^{2})/\pi}\sqrt{|t-t^{\prime}|}%
\]
and%
\[
|u^{\varepsilon}(t,x)-u(t,x)|\leq C_{\varphi}\varepsilon \hat{\mathbb{E}%
}[|B_{T}^{2}-B_{t}^{2}|]=C_{\varphi}\sqrt{2(T-t)/\pi}\varepsilon.
\]

\end{proof}

\textbf{Proof of Theorem \ref{levy theorem}.} (1)$\Longrightarrow$(2) If
$(M_{t})_{t\geq0}$ is a $G$-Brownian motion, then, by Proposition 1.4 in
Chapter IV of \cite{Peng 1}, we get $\hat{\mathbb{E}}_{t}[|M_{t+s}-M_{t}%
|^{2}]=\bar{\sigma}^{2}s$, $\langle M\rangle_{t}-\bar{\sigma}^{2}t$ and
$\underline{\sigma}^{2}t-\langle M\rangle_{t}$, $t\geq0$, are two martingales,
which implies that $\frac{1}{2}a\langle M\rangle_{t}-G(a)t$, $t\geq0$, is a
martingale for each $a\in \mathbb{R}$.

(2)$\Longrightarrow$(3) Note that $M_{t}^{2}=2\int_{0}^{t}M_{s}dM_{s}+\langle
M\rangle_{t}$ and $(\int_{0}^{t}M_{s}dM_{s})_{t\geq0}$ is a symmetric
martingale, then $\frac{1}{2}a(M_{t})^{2}-G(a)t$, $t\geq0$, is a martingale
for each $a\in \mathbb{R}$.

(3)$\Longrightarrow$(1) By Proposition \ref{newpro01}, we only need to prove
that%
\[
\hat{\mathbb{E}}_{t}[\varphi(M_{T}-M_{t})]=u(t,0),
\]
where $0<t<T$, $\varphi \in C_{b.Lip}(\mathbb{R})$ and $u$ is the solution of
PDE (\ref{neweq-8}). The proof is divided into three steps.

\textbf{Step 1: Taylor's expansion.} For each fixed $\varepsilon \in(0,1)$ and
$h\in(0,t)$, let $v^{\varepsilon}$ be the solution of the following PDE:%
\begin{equation}
\partial_{t}v^{\varepsilon}+G_{\varepsilon}(\partial_{xx}^{2}v^{\varepsilon
})=0,\text{ }v^{\varepsilon}(T+h,x)=\varphi(x). \label{neweq-10}%
\end{equation}
It is clear that $v^{\varepsilon}(s,x)=u^{\varepsilon}(s-h,x)$ for
$(s,x)\in \lbrack h,T+h]\times \mathbb{R}$, where $u^{\varepsilon}$ is the
solution of PDE (\ref{neweq-9}). Since PDE (\ref{neweq-10}) is uniformly
parabolic and $G_{\varepsilon}(\cdot)$ is a convex function, by the interior
regularity of $v^{\varepsilon}$ (see \cite{Krylov 1, W-L 1}),%
\begin{equation}
||v^{\varepsilon}||_{C^{1+\alpha/2,2+\alpha}([0,T]\times \mathbb{R})}%
<\infty \text{ for some }\alpha \in(0,1). \label{neweq-11}%
\end{equation}
For each $n\geq1$, set $t_{i}^{n}=t+n^{-1}i(T-t)$, $i=0,\ldots,n$. Let
$(M_{t}^{\varepsilon})_{t\geq0}$ be defined as in Lemma \ref{newlem2}. Denote
$M_{t_{2}}^{\varepsilon,t_{1}}:=M_{t_{2}}^{\varepsilon}-M_{t_{1}}%
^{\varepsilon}$ for $0\leq t_{1}\leq t_{2}$, then%
\begin{equation}%
\begin{array}
[c]{rl}%
\displaystyle v^{\varepsilon}(T,M_{T}^{\varepsilon,t})-v^{\varepsilon}(t,0) &
\displaystyle=\sum_{i=0}^{n-1}[v^{\varepsilon}(t_{i+1}^{n},M_{t_{i+1}^{n}%
}^{\varepsilon,t})-v^{\varepsilon}(t_{i}^{n},M_{t_{i}^{n}}^{\varepsilon,t})]\\
& \displaystyle=\sum_{i=0}^{n-1}[J_{i}^{n}+I_{i}^{n}],
\end{array}
\label{neweq-12}%
\end{equation}
where%
\[
J_{i}^{n}=\partial_{t}v^{\varepsilon}(t_{i}^{n},M_{t_{i}^{n}}^{\varepsilon
,t})(t_{i+1}^{n}-t_{i}^{n})+\partial_{x}v^{\varepsilon}(t_{i}^{n},M_{t_{i}%
^{n}}^{\varepsilon,t})M_{t_{i+1}^{n}}^{\varepsilon,t_{i}^{n}}+\frac{1}%
{2}\partial_{xx}^{2}v^{\varepsilon}(t_{i}^{n},M_{t_{i}^{n}}^{\varepsilon
,t})\left(  M_{t_{i+1}^{n}}^{\varepsilon,t_{i}^{n}}\right)  ^{2}%
\]
and%
\[%
\begin{array}
[c]{rl}%
\displaystyle I_{i}^{n}= & \displaystyle(t_{i+1}^{n}-t_{i}^{n})\int_{0}%
^{1}[\partial_{t}v^{\varepsilon}(t_{i}^{n}+\alpha(t_{i+1}^{n}-t_{i}%
^{n}),M_{t_{i+1}^{n}}^{\varepsilon,t})-\partial_{t}v^{\varepsilon}(t_{i}%
^{n},M_{t_{i}^{n}}^{\varepsilon,t})]d\alpha \\
& \displaystyle+(M_{t_{i+1}^{n}}^{\varepsilon,t_{i}^{n}})^{2}\int_{0}^{1}%
\int_{0}^{1}\alpha \lbrack \partial_{xx}^{2}v^{\varepsilon}(t_{i}^{n}%
,M_{t_{i}^{n}}^{\varepsilon,t}+\alpha \beta M_{t_{i+1}^{n}}^{\varepsilon
,t_{i}^{n}})-\partial_{xx}^{2}v^{\varepsilon}(t_{i}^{n},M_{t_{i}^{n}%
}^{\varepsilon,t})]d\beta d\alpha.
\end{array}
\]

\textbf{Step 2: Estimation of residual terms.} Set $\pi_{n}=\{0,t_{0}%
^{n},t_{1}^{n},\ldots,t_{n}^{n}\}$, by (\ref{neweq-12}) and Lemma
\ref{newlem1}, we get%
\begin{equation}
\left \vert \mathbb{\tilde{E}}_{t}^{\pi_{n}}[v^{\varepsilon}(T,M_{T}%
^{\varepsilon,t})]-v^{\varepsilon}(t,0)-\mathbb{\tilde{E}}_{t}^{\pi_{n}%
}\left[  \sum_{i=0}^{n-1}J_{i}^{n}\right]  \right \vert \leq \mathbb{\tilde{E}%
}_{t}^{\pi_{n}}\left[  \left \vert \sum_{i=0}^{n-1}I_{i}^{n}\right \vert
\right]  . \label{neweq-13}%
\end{equation}
It follows from Proposition \ref{newpro01} and Lemma \ref{newlem2} that%
\begin{align*}
&  \mathbb{\tilde{E}}_{t_{i}^{n}}^{\pi_{n}}[J_{i}^{n}]\\
&  =\mathbb{\tilde{E}}_{t_{i}^{n}}^{\pi_{n}}[\partial_{t}v^{\varepsilon}%
(t_{i}^{n},x)(t_{i+1}^{n}-t_{i}^{n})+\partial_{x}v^{\varepsilon}(t_{i}%
^{n},x)M_{t_{i+1}^{n}}^{\varepsilon,t_{i}^{n}}+\frac{1}{2}\partial_{xx}%
^{2}v^{\varepsilon}(t_{i}^{n},x)(M_{t_{i+1}^{n}}^{\varepsilon,t_{i}^{n}}%
)^{2}]_{x=M_{t_{i}^{n}}^{\varepsilon,t}}\\
&  =[\partial_{t}v^{\varepsilon}(t_{i}^{n},M_{t_{i}^{n}}^{\varepsilon
,t})+G_{\varepsilon}(\partial_{xx}^{2}v^{\varepsilon}(t_{i}^{n},M_{t_{i}^{n}%
}^{\varepsilon,t}))](t_{i+1}^{n}-t_{i}^{n}).
\end{align*}
Since $v^{\varepsilon}$ satisfies PDE (\ref{neweq-10}), we obtain
$\mathbb{\tilde{E}}_{t_{i}^{n}}^{\pi_{n}}[J_{i}^{n}]=0$. Thus%
\begin{equation}
\mathbb{\tilde{E}}_{t}^{\pi_{n}}\left[  \sum_{i=0}^{n-1}J_{i}^{n}\right]
=\mathbb{\tilde{E}}_{t}^{\pi_{n}}\left[  \sum_{i=0}^{n-2}J_{i}^{n}%
+\mathbb{\tilde{E}}_{t_{n-1}^{n}}^{\pi_{n}}[J_{n-1}^{n}]\right]
=\mathbb{\tilde{E}}_{t}^{\pi_{n}}\left[  \sum_{i=0}^{n-2}J_{i}^{n}\right]
=\cdots=0. \label{neweq-14}%
\end{equation}
Combining (\ref{neweq-13}) with (\ref{neweq-14}), we conclude that%
\begin{equation}
\mathbb{\tilde{E}}^{\pi_{n}}\left[  \left \vert \mathbb{\tilde{E}}_{t}^{\pi
_{n}}[v^{\varepsilon}(T,M_{T}^{\varepsilon,t})]-v^{\varepsilon}%
(t,0)\right \vert \right]  \leq \sum_{i=0}^{n-1}\mathbb{\tilde{E}}^{\pi_{n}%
}\left[  \left \vert I_{i}^{n}\right \vert \right]  . \label{neweq-16}%
\end{equation}
As $v^{\varepsilon}$ satisfying (\ref{neweq-11}), we can easily get%
\[
|I_{i}^{n}|\leq C_{1}\left \{  \left \vert t_{i+1}^{n}-t_{i}^{n}\right \vert
^{1+\alpha/2}+\left \vert M_{t_{i+1}^{n}}^{\varepsilon,t_{i}^{n}}\right \vert
^{2+\alpha}\right \}  ,
\]
where $C_{1}>0$ is a constant depending on $\varepsilon$, $h$, $T$, $G$ and
$\varphi$. By Lemma \ref{newlem1}, we have%
\begin{align*}
\mathbb{\tilde{E}}^{\pi_{n}}\left[  \left \vert M_{t_{i+1}^{n}}^{\varepsilon
,t_{i}^{n}}\right \vert ^{2+\alpha}\right]   &  \leq2^{1+\alpha}\left \{
\mathbb{\tilde{E}}^{\pi_{n}}\left[  \left \vert M_{t_{i+1}^{n}}-M_{t_{i}^{n}%
}\right \vert ^{2+\alpha}\right]  +\mathbb{\tilde{E}}^{\pi_{n}}\left[
\left \vert W_{t_{i+1}^{n}}-W_{t_{i}^{n}}\right \vert ^{2+\alpha}\right]
\right \} \\
&  \leq2^{1+\alpha}\left \{  \hat{\mathbb{E}}\left[  \left \vert M_{t_{i+1}^{n}%
}-M_{t_{i}^{n}}\right \vert ^{2+\alpha}\right]  +\bar{\mathbb{E}}\left[
\left \vert W_{1}\right \vert ^{2+\alpha}\right]  \left \vert t_{i+1}^{n}%
-t_{i}^{n}\right \vert ^{1+\alpha/2}\right \}  .
\end{align*}
Thus we get%
\begin{equation}
\mathbb{\tilde{E}}^{\pi_{n}}\left[  \left \vert I_{i}^{n}\right \vert \right]
\leq C_{2}\left \{  \left \vert t_{i+1}^{n}-t_{i}^{n}\right \vert ^{1+\alpha
/2}+\hat{\mathbb{E}}\left[  \left \vert M_{t_{i+1}^{n}}-M_{t_{i}^{n}%
}\right \vert ^{2+\alpha}\right]  \right \}  , \label{neweq-17}%
\end{equation}
where $C_{2}>0$ is a constant depending on $\varepsilon$, $h$, $T$, $G$ and
$\varphi$. By H\"{o}lder's inequality, we obtain%
\begin{equation}
\hat{\mathbb{E}}\left[  \left \vert M_{t_{i+1}^{n}}-M_{t_{i}^{n}}\right \vert
^{2+\alpha}\right]  \leq \left(  \hat{\mathbb{E}}\left[  \left \vert
M_{t_{i+1}^{n}}-M_{t_{i}^{n}}\right \vert ^{2}\right]  \right)  ^{1-\alpha
}\left(  \hat{\mathbb{E}}\left[  \left \vert M_{t_{i+1}^{n}}-M_{t_{i}^{n}%
}\right \vert ^{3}\right]  \right)  ^{\alpha}. \label{neweq-18}%
\end{equation}
Note that $\sup \{ \hat{\mathbb{E}}[|M_{t+\delta}-M_{t}|^{3}]:t\leq
T\}=o(\delta)\ $as$\  \delta \downarrow0$, $\hat{\mathbb{E}}[|M_{t_{i+1}^{n}%
}-M_{t_{i}^{n}}|^{2}]=2G(1)(t_{i+1}^{n}-t_{i}^{n})$ and $t_{i+1}^{n}-t_{i}%
^{n}=n^{-1}(T-t)$, then, by (\ref{neweq-17}) and (\ref{neweq-18}), we get%
\begin{equation}
\sum_{i=0}^{n-1}\mathbb{\tilde{E}}^{\pi_{n}}\left[  \left \vert I_{i}%
^{n}\right \vert \right]  =o(1). \label{new-neweq19}%
\end{equation}

\textbf{Step 3: Approximation.} From (\ref{neweq-16}) and (\ref{new-neweq19}),
we obtain
\begin{equation}
\lim_{n\rightarrow \infty}\mathbb{\tilde{E}}^{\pi_{n}}\left[  \left \vert
\mathbb{\tilde{E}}_{t}^{\pi_{n}}[v^{\varepsilon}(T,M_{T}^{\varepsilon
,t})]-v^{\varepsilon}(t,0)\right \vert \right]  =0. \label{neweq-19}%
\end{equation}
It follows from Lemma \ref{newlem3} that%
\[
|v^{\varepsilon}(T,x)-v^{\varepsilon}(T+h,x)|=|u^{\varepsilon}%
(T-h,x)-u^{\varepsilon}(T,x)|\leq C_{\varphi}\sqrt{2(\bar{\sigma}^{2}+1)/\pi
}\sqrt{h}%
\]
and%
\begin{align*}
|v^{\varepsilon}(t,0)-u(t,0)|  &  =|u^{\varepsilon}(t-h,0)-u(t,0)|\\
&  \leq|u^{\varepsilon}(t-h,0)-u^{\varepsilon}(t,0)|+|u^{\varepsilon
}(t,0)-u(t,0)|\\
&  \leq C_{\varphi}\sqrt{2(\bar{\sigma}^{2}+1)/\pi}\sqrt{h}+C_{\varphi}%
\sqrt{2(T-t)/\pi}\varepsilon.
\end{align*}
Since $v^{\varepsilon}(T+h,x)=\varphi(x)$, by Lemma \ref{newlem1}, we deduce%
\begin{align*}
&  \hat{\mathbb{E}}\left[  \left \vert \hat{\mathbb{E}}_{t}[\varphi(M_{T}%
-M_{t})]-u(t,0)\right \vert \right] \\
&  =\mathbb{\tilde{E}}^{\pi_{n}}\left[  \left \vert \mathbb{\tilde{E}}_{t}%
^{\pi_{n}}[\varphi(M_{T}-M_{t})]-u(t,0)\right \vert \right] \\
&  \leq \mathbb{\tilde{E}}^{\pi_{n}}\left[  |\varphi(M_{T}-M_{t})]-\varphi
(M_{T}^{\varepsilon,t})|+|v^{\varepsilon}(T+h,M_{T}^{\varepsilon
,t})-v^{\varepsilon}(T,M_{T}^{\varepsilon,t})|\right] \\
&  \  \  \ +\mathbb{\tilde{E}}^{\pi_{n}}\left[  \left \vert \mathbb{\tilde{E}%
}_{t}^{\pi_{n}}[v^{\varepsilon}(T,M_{T}^{\varepsilon,t})]-v^{\varepsilon
}(t,0)\right \vert \right]  +|v^{\varepsilon}(t,0)-u(t,0)|\\
&  \leq \mathbb{\tilde{E}}^{\pi_{n}}\left[  \left \vert \mathbb{\tilde{E}}%
_{t}^{\pi_{n}}[v^{\varepsilon}(T,M_{T}^{\varepsilon,t})]-v^{\varepsilon
}(t,0)\right \vert \right]  +C_{\varphi}\varepsilon \mathbb{\bar{E}}%
[|W_{T}-W_{t}|]+2C_{\varphi}\sqrt{2(\bar{\sigma}^{2}+1)/\pi}\sqrt
{h}+C_{\varphi}\sqrt{2(T-t)/\pi}\varepsilon \\
&  \leq \mathbb{\tilde{E}}^{\pi_{n}}\left[  \left \vert \mathbb{\tilde{E}}%
_{t}^{\pi_{n}}[v^{\varepsilon}(T,M_{T}^{\varepsilon,t})]-v^{\varepsilon
}(t,0)\right \vert \right]  +2C_{\varphi}\sqrt{2(\bar{\sigma}^{2}+1)/\pi}%
\sqrt{h}+2C_{\varphi}\sqrt{2(T-t)/\pi}\varepsilon.
\end{align*}
Taking $n\rightarrow \infty$, by (\ref{neweq-19}), we get%
\[
\hat{\mathbb{E}}\left[  \left \vert \hat{\mathbb{E}}_{t}[\varphi(M_{T}%
-M_{t})]-u(t,0)\right \vert \right]  \leq2C_{\varphi}\sqrt{2(\bar{\sigma}%
^{2}+1)/\pi}\sqrt{h}+2C_{\varphi}\sqrt{2(T-t)/\pi}\varepsilon.
\]
Letting $h\rightarrow0$ and $\varepsilon \rightarrow0$, we obtain
$\hat{\mathbb{E}}_{t}[\varphi(M_{T}-M_{t})]=u(t,0)$. $\Box$

We now consider the L\'{e}vy's martingale characterization of $G$-Brownian
motion on the $G$-expectation space. In this case, we do not need the
assumptions $M_{t}\in(\mathbb{L}^{3}(\Omega_{t}))^{d}$ and $\sup \{
\hat{\mathbb{E}}[|M_{t+\delta}-M_{t}|^{3}]:t\leq T\}=o(\delta)\ $%
as$\  \delta \downarrow0$ as in Theorem \ref{levy theorem}.

\begin{theorem}
\label{levyG}Let $\bar{G}:\mathbb{S}(d^{\prime})\rightarrow \mathbb{R}$ and
$G:\mathbb{S}(d)\rightarrow \mathbb{R}$ be two given monotonic and sublinear
functions, and let $(\Omega,L_{\bar{G}}^{1}(\Omega),(L_{\bar{G}}^{1}%
(\Omega_{t}))_{t\geq0},(\hat{\mathbb{E}}_{t})_{t\geq0})$ be a $\bar{G}%
$-expectation space. Assume $(M_{t})_{t\geq0}$ is a $d$-dimensional symmetric
martingale satisfying $M_{0}=0,M_{t}\in(L_{\bar{G}}^{2}(\Omega_{t}))^{d}$ for
each $t\geq0$. Then the following conditions are equivalent:

\begin{description}
\item[(1)] $(M_{t})_{t\geq0}$ is a $G$-Brownian motion;

\item[(2)] The process $\frac{1}{2}\langle AM_{t},M_{t}\rangle-G(A)t$,
$t\geq0$, is a martingale for each $A\in \mathbb{S}(d)$;

\item[(3)] $\hat{\mathbb{E}}_{t}[|M_{t+s}-M_{t}|^{2}]\leq Cs$ for each
$t,s\geq0$ and the process $\frac{1}{2}$\textrm{tr}$[A\langle M\rangle
_{t}]-G(A)t$, $t\geq0$, is a martingale for each $A\in \mathbb{S}(d)$, where
$C>0$ is a constant.
\end{description}
\end{theorem}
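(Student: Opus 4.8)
\textbf{Proof proposal for Theorem \ref{levyG}.}
The plan is to reduce this statement to Theorem \ref{levy theorem}, since a $\bar G$-expectation space $(\Omega, L^1_{\bar G}(\Omega), (L^1_{\bar G}(\Omega_t))_{t\ge 0}, (\hat{\mathbb{E}}_t)_{t\ge 0})$ is in particular a complete consistent sublinear expectation space (this is the Remark following Definition \ref{sublinear conditional expectation}). The only gap between the two theorems is that here we have dropped the integrability hypothesis $M_t\in (\mathbb{L}^3(\Omega_t))^d$ together with the third-moment smallness condition $\sup\{\hat{\mathbb{E}}[|M_{t+\delta}-M_t|^3]:t\le T\}=o(\delta)$, and we only assume $M_t\in (L^2_{\bar G}(\Omega_t))^d$. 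So the entire task is to recover, or circumvent, those two conditions using the special structure of the $\bar G$-expectation space. As before I would do everything for $d=1$; the implications (2)$\Longleftrightarrow$(3) are immediate from the identity $M_t^2=2\int_0^t M_s\,dM_s+\langle M\rangle_t$ together with the fact that the Itô integral against the symmetric martingale $M$ is again a symmetric martingale (Lemma \ref{Mcontrol}), exactly as in the proof of Theorem \ref{levy theorem}; and (1)$\Longrightarrow$(2) is Proposition 1.4 in Chapter IV of \cite{Peng 1}. The real content is (2) (equivalently (3)) $\Longrightarrow$ (1).

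For that implication I would first exploit the condition in (3), namely $\hat{\mathbb{E}}_t[|M_{t+s}-M_t|^2]\le Cs$ and the martingale property of $\frac12 a\langle M\rangle_t - G(a)t$, which as in the proof of Theorem \ref{levy theorem} give $\hat{\mathbb{E}}_t[|M_{t+s}-M_t|^2]=\bar\sigma^2 s$ and hence $\hat{\mathbb{E}}_t[a(M_{t+s}-M_t)^2]=2G(a)s$. The point is now to upgrade $M$ from $L^2_{\bar G}$ to $L^3_{\bar G}$ (indeed to all $L^p_{\bar G}$) and to verify the $o(\delta)$ third-moment estimate using the Burkholder-Davis-Gundy type inequalities available on a $\bar G$-expectation space. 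Concretely, $M$ is a symmetric martingale with $M_t^2 - \langle M\rangle_t = 2\int_0^t M_s\,dM_s$ a symmetric martingale and $\langle M\rangle$ an increasing process with $\hat{\mathbb{E}}_t[\langle M\rangle_{t+s}-\langle M\rangle_t]\le Cs$; the BDG inequality on $L^p_{\bar G}(\Omega)$ (e.g. Song's inequalities, or Proposition in \cite{Peng 1}) then yields $\hat{\mathbb{E}}[|M_{t+s}-M_t|^p]\le C_p\,\hat{\mathbb{E}}[(\langle M\rangle_{t+s}-\langle M\rangle_t)^{p/2}]\le C_p'\, s^{p/2}$ for every $p\ge 2$, using $\hat{\mathbb{E}}_t[(\langle M\rangle_{t+s}-\langle M\rangle_t)^{k}] \le (Cs)^{k}k!$ or a similar exponential-moment bound derived by iterating the conditional linear growth estimate for the increasing process $\langle M\rangle$. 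In particular $M_t\in (L^3_{\bar G}(\Omega_t))^d$ and $\sup\{\hat{\mathbb{E}}[|M_{t+\delta}-M_t|^3]:t\le T\}\le C_3'\,\delta^{3/2}=o(\delta)$. Once these are in hand, all hypotheses of Theorem \ref{levy theorem} are met and we conclude that $(M_t)_{t\ge 0}$ is a $G$-Brownian motion.

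The step I expect to be the main obstacle is precisely this moment upgrade: showing that the bare $L^2$ hypothesis plus the quadratic-variation control forces $M$ into $L^3_{\bar G}$ with the sharp $\delta^{3/2}$ increment estimate. The delicate point is that BDG-type estimates on $G$-expectation spaces are usually stated for integrals of processes in $M^p_{\bar G}(0,T)$ against $\bar G$-Brownian motion, not for an abstract symmetric martingale $M$ that is only known to live on the $\bar G$-expectation space; so I would need to either invoke the version of BDG proved for such martingales (the integral $\int_0^t M_s\,dM_s$ is built in Section 3 of this paper, and $\langle M\rangle$ controls its increments via Lemma \ref{Mcontrol} and \eqref{M}), or, failing a clean citation, bootstrap by hand: first get $L^4$ from $\hat{\mathbb{E}}[|M_{t+s}-M_t|^4]\le C\,\hat{\mathbb{E}}[(\int_t^{t+s}M_u\,dM_u)^2] + C\,\hat{\mathbb{E}}[(\langle M\rangle_{t+s}-\langle M\rangle_t)^2] \le C'\hat{\mathbb{E}}[\int_t^{t+s}|M_u-M_t|^2\,du] + \cdots$, which recursively feeds the $L^2$ bound back in, and then interpolate with $L^2$ and $L^4$ via Hölder to obtain the $L^3$ estimate $\hat{\mathbb{E}}[|M_{t+s}-M_t|^3]\le (\hat{\mathbb{E}}[|M_{t+s}-M_t|^2])^{1/2}(\hat{\mathbb{E}}[|M_{t+s}-M_t|^4])^{1/2}\le C s^{3/2}$. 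With that interpolation estimate the $o(\delta)$ condition is automatic and the reduction to Theorem \ref{levy theorem} goes through verbatim.
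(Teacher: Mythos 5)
Your overall strategy --- reduce Theorem \ref{levyG} to Theorem \ref{levy theorem} by upgrading the $L^{2}$ hypothesis to an $L^{3}$ bound with increment estimate of order $\delta^{3/2}$ --- is genuinely different from the paper's. The paper does not upgrade moments at all. It exploits the one tool available on a $\bar{G}$-expectation space but not on an abstract consistent sublinear expectation space: the martingale representation theorem for symmetric martingales (Theorem 4.8 in \cite{Song1}). After enlarging to a $\tilde{G}$-expectation space carrying an extra Brownian motion $\bar{B}$ and setting $M^{\varepsilon}=M+\varepsilon\bar{B}$, it applies It\^{o}'s formula in continuous time to $v^{\varepsilon}(s,M^{\varepsilon,t}_{s})$, where $v^{\varepsilon}$ is the $C^{1+\alpha/2,2+\alpha}$ solution of the nondegenerate terminal-value problem; the stochastic integral term has zero conditional expectation because it is a symmetric martingale, and the remaining term $\frac{1}{2}\int\partial^{2}_{xx}v^{\varepsilon}\,d\langle M^{\varepsilon}\rangle-\int G_{\varepsilon}(\partial^{2}_{xx}v^{\varepsilon})\,ds$ vanishes under $\tilde{\mathbb{E}}_{t}$ by Proposition 1.4 in Chapter IV of \cite{Peng 1}. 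Since there is no discrete Taylor expansion, the residual terms $I^{n}_{i}$ whose control forced the $\mathbb{L}^{3}$ hypothesis and the $o(\delta)$ condition in Theorem \ref{levy theorem} never appear; this is exactly the content of the remark following the theorem.

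Your route could in principle be completed, but the moment upgrade --- which you correctly flag as the main obstacle --- has a real gap as written. In the bootstrap inequality $\hat{\mathbb{E}}[|M_{t+s}-M_{t}|^{4}]\leq C\hat{\mathbb{E}}[(\int_{t}^{t+s}(M_{u}-M_{t})\,dM_{u})^{2}]+C\hat{\mathbb{E}}[(\langle M\rangle_{t+s}-\langle M\rangle_{t})^{2}]$, the first term is handled by Lemma \ref{Mcontrol}, but the second is not controlled by anything you have established: $\langle M\rangle_{t+s}-\langle M\rangle_{t}$ is itself an $\mathbb{L}^{2}$-limit of sums of squared increments of $M$, so bounding its second moment is essentially equivalent to the fourth-moment bound you are trying to prove, and the argument is circular unless one inserts a Garsia--Neveu-type lemma for the increasing process $\langle M\rangle$ (passing from $\hat{\mathbb{E}}_{t}[\langle M\rangle_{t+s}-\langle M\rangle_{t}]\leq Cs$ to $k$-th moments of order $s^{k}$). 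In the sublinear setting the clean way to get both this and a BDG inequality for an abstract symmetric martingale is to use the representation $\hat{\mathbb{E}}=\sup_{P\in\mathcal{P}}E_{P}$, observe that $M$ is a classical $P$-martingale for each $P$, and apply the classical inequalities under each $P$ with constants independent of $P$. Even then, $\sup_{P}E_{P}[|M_{t+s}-M_{t}|^{3}]\leq Cs^{3/2}$ gives only finiteness of the upper expectation, not membership of $M_{t}$ in $L^{3}_{\bar{G}}(\Omega_{t})$, which Theorem \ref{levy theorem} requires; that last step needs the characterization of the $L^{p}_{G}$ spaces from \cite{D-H-P} (quasi-continuity plus uniform integrability, the latter supplied by a uniform fourth-moment bound). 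None of this is fatal, but it is a substantial amount of machinery left implicit in your sketch, and it is precisely the machinery the paper's proof is designed to avoid.
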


\begin{proof}
We only prove the case $d=1$. The case $d>1$ is similar. The proof for
(1)$\Longrightarrow$(2) is the same as Theorem \ref{levy theorem}.

(2)$\Longrightarrow$(3) Taking $A=1$, we get $M_{t}^{2}-\bar{\sigma}^{2}t$ is
a martingale, where $G(a)=\frac{1}{2}(\bar{\sigma}^{2}a^{+}-\underline{\sigma
}^{2}a^{-})$ for $a\in \mathbb{R}$. From this, we have%
\[
\hat{\mathbb{E}}_{t}[|M_{t+s}-M_{t}|^{2}]=\hat{\mathbb{E}}_{t}[M_{t+s}%
^{2}-M_{t}^{2}-2M_{t}(M_{t+s}-M_{t})]=\bar{\sigma}^{2}(t-s).
\]
Noting that $\langle M\rangle_{t}=M_{t}^{2}-2\int_{0}^{t}M_{s}dM_{s}$, we
obtain the desired result.

(3)$\Longrightarrow$(1) By Proposition \ref{newpro01}, we only need to prove
that%
\[
\hat{\mathbb{E}}_{t}[\varphi(M_{T}-M_{t})]=u(t,0),
\]
where $0<t<T$, $\varphi \in C_{b.Lip}(\mathbb{R})$ and $u$ is the solution of
PDE (\ref{neweq-8}).

Let $(B_{t})_{t\geq0}$ be the $\bar{G}$-Brownian motion. Following Section 2
in Chapter III in \cite{Peng 1}, we can construct an auxiliary $\tilde{G}%
$-expectation space $(\tilde{\Omega},L_{\tilde{G}}^{1}(\tilde{\Omega
}),(L_{\tilde{G}}^{1}(\tilde{\Omega}_{t}))_{t\geq0},(\mathbb{\tilde{E}}%
_{t})_{t\geq0})$ such that

\begin{description}
\item[(i)] $\tilde{\Omega}=\Omega \times C_{0}([0,\infty))$, where
$C_{0}([0,\infty))$ is the space of real-valued continuous paths $(\omega
_{t})_{t\geq0}$ with $\omega_{0}=0$;

\item[(ii)] $(B_{t},\bar{B}_{t})_{t\geq0}$ is a $\tilde{G}$-Brownian motion,
where $\bar{B}$ is the canonical process on $C_{0}([0,\infty))$, and
\[
\tilde{G}(A)=\bar{G}(A^{\prime})+\frac{1}{2}c\text{ for}\ A=\left[
\begin{array}
[c]{cc}%
A^{\prime} & b\\
b & c
\end{array}
\right]  \in \mathbb{S}(d^{\prime}+1).
\]

\end{description}

For each fixed $\varepsilon \in(0,1)$, define $M_{t}^{\varepsilon}%
=M_{t}+\varepsilon \bar{B}_{t}$. One can easily check that $(M_{t}%
^{\varepsilon})_{t\geq0}$ is a symmetric martingale. By Corollary 5.7 in
Chapter III in \cite{Peng 1}, we can deduce that $\frac{1}{2}a\langle
M^{\varepsilon}\rangle_{t}-G_{\varepsilon}(a)t$, $t\geq0$, is a martingale.
For each fixed $h\in(0,t)$, let $v^{\varepsilon}$ be the solution to the
following PDE:
\[
\partial_{t}v^{\varepsilon}+G_{\varepsilon}(\partial_{xx}^{2}v^{\varepsilon
})=0,\text{ }v^{\varepsilon}(T+h,x)=\varphi(x).
\]
By the interior regularity of $v^{\varepsilon}$ (see \cite{Krylov 1, W-L 1}),%
\[
||v^{\varepsilon}||_{C^{1+\alpha/2,2+\alpha}([0,T]\times \mathbb{R})}%
<\infty \text{ for some }\alpha \in(0,1).
\]
By martingale representation theorem for symmetric martingale (see Theorem 4.8 in
\cite{Song1}), applying It\^{o}'s formula to $v^{\varepsilon}(s,M_{s}%
^{\varepsilon,t})$ on $[t,T]$, where $M_{s}^{\varepsilon,t}=M_{s}%
^{\varepsilon}-M_{t}^{\varepsilon}$, we get%
\begin{align*}
v^{\varepsilon}(T,M_{T}^{\varepsilon,t})  &  =v^{\varepsilon}(t,0)+\int
_{t}^{T}\partial_{x}v^{\varepsilon}(s,M_{s}^{\varepsilon,t})dM_{s}%
^{\varepsilon}+\frac{1}{2}\int_{t}^{T}\partial_{xx}^{2}v^{\varepsilon}%
(s,M_{s}^{\varepsilon,t})d\langle M^{\varepsilon}\rangle_{s}+\int_{t}%
^{T}\partial_{t}v^{\varepsilon}(s,M_{s}^{\varepsilon,t})ds\\
&  =v^{\varepsilon}(t,0)+\int_{t}^{T}\partial_{x}v^{\varepsilon}%
(s,M_{s}^{\varepsilon,t})dM_{s}^{\varepsilon}+\frac{1}{2}\int_{t}^{T}%
\partial_{xx}^{2}v^{\varepsilon}(s,M_{s}^{\varepsilon,t})d\langle
M^{\varepsilon}\rangle_{s}-\int_{t}^{T}G_{\varepsilon}(\partial_{xx}%
^{2}v^{\varepsilon}(s,M_{s}^{\varepsilon,t}))ds.
\end{align*}
Taking conditional expectation $\tilde{\mathbb{E}}_{t}$ on both sides, we
have
\[
\tilde{\mathbb{E}}_{t}[v^{\varepsilon}(T,M_{T}^{\varepsilon,t}%
)]=v^{\varepsilon}(t,0)+\tilde{\mathbb{E}}_{t}\left[  \frac{1}{2}\int_{t}%
^{T}\partial_{xx}^{2}v^{\varepsilon}(s,M_{s}^{\varepsilon,t})d\langle
M^{\varepsilon}\rangle_{s}-\int_{t}^{T}G_{\varepsilon}(\partial_{xx}%
^{2}v^{\varepsilon}(s,M_{s}^{\varepsilon,t}))ds\right]  .
\]
Noting that $\frac{1}{2}a\langle M^{\varepsilon}\rangle_{t}-G_{\varepsilon
}(a)t$, $t\geq0$, is a martingale, by Proposition 1.4 in Chapter IV in
\cite{Peng 1}, we know that%
\[
\tilde{\mathbb{E}}_{t}\left[  \frac{1}{2}\int_{t}^{T}\partial_{xx}%
^{2}v^{\varepsilon}(s,M_{s}^{\varepsilon,t})d\langle M^{\varepsilon}%
\rangle_{s}-\int_{t}^{T}G_{\varepsilon}(\partial_{xx}^{2}v^{\varepsilon
}(s,M_{s}^{\varepsilon,t}))ds\right]  =0.
\]
Thus
\[
\tilde{\mathbb{E}}_{t}[v^{\varepsilon}(T,M_{T}^{\varepsilon,t}%
)]=v^{\varepsilon}(t,0).
\]
Similar to the proof of Theorem \ref{levy theorem}, we get%
\[
\hat{\mathbb{E}}\left[  \left \vert \hat{\mathbb{E}}_{t}[\varphi(M_{T}%
-M_{t})]-u(t,0)\right \vert \right]  \leq2C_{\varphi}\sqrt{2(\bar{\sigma}%
^{2}+1)/\pi}\sqrt{h}+2C_{\varphi}\sqrt{2(T-t)/\pi}\varepsilon,
\]
where $C_{\varphi}$ is the Lipschitz constant of $\varphi$. Letting
$h\rightarrow0$ and $\varepsilon \rightarrow0$, we obtain the desired result.
\end{proof}

\begin{remark}
It is important to note that we can easily construct a continuous symmetric
martingale $(M_{t}^{\varepsilon})_{t\geq0}$ and use It\^{o}'s formula on the
$G$-expectation space. However, we can only construct a discrete symmetric
martingale $(M_{t_{i}}^{\varepsilon})_{i=0}^{n}$ and use Taylor's expansion on
the complete consistent sublinear expectation space.
\end{remark}

\section{Reflection principle of $G$-Brownian motion}

In this section, let $\Omega=C_{0}([0,\infty))$ be the space of real-valued
continuous paths $(\omega_{t})_{t\geq0}$ with $\omega_{0}=0$. The canonical
process $(B_{t})_{t\geq0}$ is defined by%
\[
B_{t}(\omega):=\omega_{t}\text{ for }\omega \in \Omega.
\]
For each given $0\leq \underline{\sigma}^{2}\leq \bar{\sigma}^{2}$ with
$\bar{\sigma}^{2}>0$, define%
\begin{equation}
G(a):=\frac{1}{2}(\bar{\sigma}^{2}a^{+}-\underline{\sigma}^{2}a^{-})\text{ for
}a\in \mathbb{R}. \label{neweq-21}%
\end{equation}
Peng in \cite{Peng 1} constructed a sublinear expectation $\mathbb{\hat{E}%
}^{G}[\cdot]$ called $G$-expectation on $L_{ip}(\Omega)$, under which
$(B_{t})_{t\geq0}$ is a $1$-dimensional $G$-Brownian motion. Furthermore, for
any given $\tilde{G}:\mathbb{R}\rightarrow \mathbb{R}$ such that%
\begin{equation}
\left \{
\begin{array}
[c]{l}%
\tilde{G}(0)=0;\\
\tilde{G}(a)\leq \tilde{G}(b)\text{ if }a\leq b;\\
\tilde{G}(a)-\tilde{G}(b)\leq G(a-b)\text{ for }a,b\in \mathbb{R}.
\end{array}
\right.  \label{neweq-22}%
\end{equation}
By using the following PDE:%
\begin{equation}
\partial_{t}u-\tilde{G}(\partial_{xx}^{2}u)=0,\ u(0,x)=\varphi(x),
\label{neweq-23}%
\end{equation}
Peng constructed a nonlinear expectation $\mathbb{\hat{E}}^{\tilde{G}}[\cdot]$
called $\tilde{G}$-expectation on $L_{ip}(\Omega)$ satisfying the following
relation:%
\[
\mathbb{\hat{E}}^{\tilde{G}}[\varphi(B_{t})]=u^{\varphi}(t,0)\text{ for }%
t\geq0,
\]%
\begin{equation}
\mathbb{\hat{E}}^{\tilde{G}}[X]-\mathbb{\hat{E}}^{\tilde{G}}[Y]\leq
\mathbb{\hat{E}}^{G}[X-Y]\text{ for }X,Y\in L_{ip}(\Omega), \label{neweq-20}%
\end{equation}
where $u^{\varphi}$ is the solution of PDE (\ref{neweq-23}). Similar to the
definition of $\mathbb{\hat{E}}_{t}^{G}[\cdot]$ in (\ref{new-newww-2}), Peng
also define the nonlinear conditional expectation $\mathbb{\hat{E}}%
_{t}^{\tilde{G}}[\cdot]$, which still satisfies the relation
(\ref{new-newww-1}). Under the nonlinear expectation space $(\Omega
,L_{ip}(\Omega),(L_{ip}(\Omega_{t}))_{t\geq0},(\mathbb{\hat{E}}_{t}^{\tilde
{G}})_{t\geq0})$, $(B_{t})_{t\geq0}$ is a process with stationary and
independent increments, which is called a $1$-dimensional $\tilde{G}$-Brownian
motion. It is important to note that $G$ satisfies (\ref{neweq-22}), which
implies that the $\tilde{G}$-Brownian motion is a generalization of the
$G$-Brownian motion. By (\ref{neweq-20}), we can easily obtain%
\begin{equation}
\left \vert \mathbb{\hat{E}}^{\tilde{G}}[X]-\mathbb{\hat{E}}^{\tilde{G}%
}[Y]\right \vert \leq \mathbb{\hat{E}}^{G}[|X-Y|]\text{ for }X,Y\in
L_{ip}(\Omega). \label{neweq-211}%
\end{equation}
Thus $\mathbb{\hat{E}}^{\tilde{G}}[\cdot]$ can be continuously extended on
$L_{G}^{1}(\Omega)$.

Consider the following space of simple processes: for $p\geq1$,%
\[
M_{G}^{p,0}(0,T):=\left \{  \eta_{t}=\sum_{i=0}^{N-1}\xi_{i}I_{[t_{i},t_{i+1}%
)}(t):0=t_{0}<\cdots<t_{N}=T,\xi_{i}\in L_{G}^{p}(\Omega_{t_{i}})\right \}  .
\]
Denote by $M_{G}^{p}(0,T)$ (resp. $\bar{M}_{G}^{p}(0,T)$) the completion of
$M_{G}^{p,0}(0,T)$ under the norm $||\eta||_{M_{G}^{p}}:=\left(
\mathbb{\hat{E}}^{G}\left[  \int_{0}^{T}|\eta_{t}|^{p}dt\right]  \right)
^{1/p}$ (resp. $||\eta||_{\bar{M}_{G}^{p}}:=\left(  \mathbb{\hat{E}}%
^{G}\left[  \int_{0}^{T}|\eta_{t}|^{p}d\langle B\rangle_{t}\right]  \right)
^{1/p}$), where $(\langle B\rangle_{t})_{t\geq0}$ is the quadratic variation
process of $G$-Brownian motion $(B_{t})_{t\geq0}$. Peng in \cite{Peng 1}
showed that%
\[
\underline{\sigma}^{2}s\leq \langle B\rangle_{t+s}-\langle B\rangle_{t}\leq
\bar{\sigma}^{2}s\text{ for }t,s\geq0\text{.}%
\]
Thus $M_{G}^{p}(0,T)\subset \bar{M}_{G}^{p}(0,T)$ and $M_{G}^{p}(0,T)=\bar
{M}_{G}^{p}(0,T)$ under $\underline{\sigma}^{2}>0$. For each $\eta_{t}%
=\sum_{i=0}^{N-1}\xi_{i}I_{[t_{i},t_{i+1})}(t)\in M_{G}^{2,0}(0,T)$, define
the It\^{o} integral%
\[
\int_{0}^{T}\eta_{t}dB_{t}:=\sum_{i=0}^{N-1}\xi_{i}(B_{t_{i+1}}-B_{t_{i}}).
\]
Peng in \cite{peng2005, peng2008, Peng 1} obtained the following It\^{o}
equality%
\[
\mathbb{\hat{E}}^{G}\left[  \left(  \int_{0}^{T}\eta_{t}dB_{t}\right)
^{2}\right]  =\mathbb{\hat{E}}^{G}\left[  \int_{0}^{T}|\eta_{t}|^{2}d\langle
B\rangle_{t}\right]  .
\]
Thus the It\^{o} integral can be continuously extended on $\bar{M}_{G}%
^{2}(0,T)$.

The purpose of this section is to obtain the distribution of $\sup_{s\leq
t}B_{s}-B_{t}$, for this we need to use the It\^{o}-Tanaka formula. In
\cite{L-Q, Y-S-G}, the authors obtained the It\^{o}-Tanaka formula and related
properties for $G$-Brownian motion. Here we use the representation theorem for
$G$-expectation to study the It\^{o}-Tanaka formula.

The following theorem is the representation theorem for $G$-expectation.

\begin{theorem}
(see \cite{D-H-P, H-P}) There exists a weakly compact set of probability
measures $\mathcal{P}$ on $(\Omega,\mathcal{B}(\Omega))$ such that
\[
\mathbb{\hat{E}}^{G}[\xi]=\sup_{P\in \mathcal{P}}E_{P}[\xi]\text{ for all }%
\xi \in L_{G}^{1}(\Omega).
\]
$\mathcal{P}$ is called a set that represents $\mathbb{\hat{E}}^{G}$.
\end{theorem}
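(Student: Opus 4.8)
This is a known result, cited to \cite{H-P} and \cite{D-H-P}; I sketch the stochastic-control construction of the representing set, which is the most natural one in the present $G$-Brownian setting. We work in the $1$-dimensional situation of this section, where $G(a)=\frac{1}{2}(\bar{\sigma}^{2}a^{+}-\underline{\sigma}^{2}a^{-})$. Fix a classical filtered probability space $(\hat{\Omega},\hat{\mathcal{F}},(\hat{\mathcal{F}}_{t})_{t\geq0},\hat{P})$ carrying a Brownian motion $W$, and let $\mathcal{A}$ be the set of all $(\hat{\mathcal{F}}_{t})$-progressively measurable processes $\theta=(\theta_{t})_{t\geq0}$ with $\theta_{t}\in[\underline{\sigma},\bar{\sigma}]$. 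For $\theta\in\mathcal{A}$ put $B^{\theta}_{t}=\int_{0}^{t}\theta_{s}\,dW_{s}$, let $P_{\theta}$ be the law of $(B^{\theta}_{t})_{t\geq0}$ on $\Omega=C_{0}([0,\infty))$, and set $\mathcal{P}_{0}=\{P_{\theta}:\theta\in\mathcal{A}\}$ and $\mathcal{P}=\overline{\mathcal{P}_{0}}$, the closure in the topology of weak convergence.

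\textbf{Step 1 (representation on $L_{ip}(\Omega)$).} The plan is to prove $\mathbb{\hat{E}}^{G}[\xi]=\sup_{P\in\mathcal{P}_{0}}E_{P}[\xi]$ for every $\xi=\varphi(B_{t_{1}},B_{t_{2}}-B_{t_{1}},\ldots,B_{t_{n}}-B_{t_{n-1}})\in L_{ip}(\Omega)$. The right-hand side is the value of a stochastic control problem whose dynamic programming equation is exactly the $G$-heat equation $\partial_{t}u-G(\partial_{xx}^{2}u)=0$. One argues by induction on $n$: for $n=1$, the function $u^{\varphi}(t,x):=\sup_{\theta\in\mathcal{A}}E[\varphi(x+B^{\theta}_{t})]$ is a bounded Lipschitz viscosity solution of the $G$-heat equation with initial datum $\varphi$, so by the comparison principle it coincides with the solution defining the $G$-normal distribution, giving $\mathbb{\hat{E}}^{G}[\varphi(B_{t})]=u^{\varphi}(t,0)=\sup_{\theta}E[\varphi(B^{\theta}_{t})]$; the inductive step follows from the dynamic programming principle for the control problem combined with the stationarity and independence of increments that defines $\mathbb{\hat{E}}^{G}$ on $L_{ip}(\Omega)$.

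\textbf{Step 2 (weak compactness) and Step 3 (extension to $L_{G}^{1}(\Omega)$).} Since $|\theta|\leq\bar{\sigma}$, the Burkholder--Davis--Gundy inequality gives $E_{P_{\theta}}[|B_{t}-B_{s}|^{4}]\leq C\bar{\sigma}^{4}|t-s|^{2}$ uniformly in $\theta$, so by Kolmogorov's criterion $\mathcal{P}_{0}$ is tight on $C_{0}([0,T])$ for every $T$, hence on $\Omega$; by Prokhorov's theorem $\mathcal{P}=\overline{\mathcal{P}_{0}}$ is weakly compact, and tightness of $\mathcal{P}_{0}$ yields $\sup_{P\in\mathcal{P}}E_{P}[\xi]=\sup_{P\in\mathcal{P}_{0}}E_{P}[\xi]$ for all bounded continuous $\xi$. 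Writing $\mathbb{E}^{\mathcal{P}}[\cdot]:=\sup_{P\in\mathcal{P}}E_{P}[\cdot]$, Steps 1--2 show $\mathbb{\hat{E}}^{G}=\mathbb{E}^{\mathcal{P}}$ on $L_{ip}(\Omega)$; both functionals are sublinear and satisfy $|\mathbb{E}^{\mathcal{P}}[\xi]-\mathbb{E}^{\mathcal{P}}[\eta]|\leq\mathbb{E}^{\mathcal{P}}[|\xi-\eta|]=\mathbb{\hat{E}}^{G}[|\xi-\eta|]=\|\xi-\eta\|_{L_{G}^{1}}$, hence are $1$-Lipschitz for the $L_{G}^{1}$-norm, and since $L_{ip}(\Omega)$ is dense in $L_{G}^{1}(\Omega)$ the identity extends to all $\xi\in L_{G}^{1}(\Omega)$.

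\textbf{Main obstacle.} The substantive point is Step 1: identifying the $G$-heat equation as the Bellman equation of the control problem and making the dynamic programming principle rigorous for the nested multi-time functionals $\varphi(B_{t_{1}},\ldots,B_{t_{n}}-B_{t_{n-1}})$, which requires the viscosity comparison theorem for the $G$-heat equation together with care in matching the iterated operators $\mathbb{\hat{E}}^{G}_{t_{i}}$ with conditional expectations under the laws $P_{\theta}$. An alternative route avoiding explicit control processes is the Daniell--Stone/capacity argument of \cite{D-H-P}: one first shows $\mathbb{\hat{E}}^{G}$ is regular on $C_{b}(\Omega)$ (if $\xi_{n}\in C_{b}(\Omega)$ and $\xi_{n}\downarrow0$ pointwise then $\mathbb{\hat{E}}^{G}[\xi_{n}]\downarrow0$, which again follows from the tightness of Step 2 via Dini's theorem), and then a sublinear Daniell--Stone representation produces a family of $\sigma$-additive probability measures whose weak closure is weakly compact by the same tightness estimate.
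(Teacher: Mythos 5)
This theorem is quoted in the paper from \cite{D-H-P, H-P} without proof, so there is no in-paper argument to compare against; your sketch is essentially the standard construction from those references (the stochastic-control family $\{P_{\theta}\}$, identification with the $G$-heat equation via dynamic programming, tightness by the uniform fourth-moment bound, and density of $L_{ip}(\Omega)$), and it is correct in outline. The only point stated more loosely than it should be is Step 3: the continuity argument extends the identity $\mathbb{\hat{E}}^{G}=\mathbb{E}^{\mathcal{P}}$ from $L_{ip}(\Omega)$ to the abstract completion $L_{G}^{1}(\Omega)$, but to read the extended functional as $\sup_{P}E_{P}[\xi]$ for an actual random variable $\xi$ one must realize elements of the completion as quasi-continuous functions and check that $\sup_{P}E_{P}[\cdot]$ is continuous for the induced capacity norm --- this is exactly the machinery of \cite{D-H-P} that you correctly flag in your closing remark.
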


Let $\mathcal{P}$ be a weakly compact set that represents $\mathbb{\hat{E}%
}^{G}$. For this $\mathcal{P}$, we define capacity
\[
c(A):=\sup_{P\in \mathcal{P}}P(A),\  \ A\in \mathcal{B}(\Omega).
\]

A set $A\subset \mathcal{B}(\Omega)$ is called polar if $c(A)=0$. A property
holds "quasi-surely" (q.s.) if it holds outside a polar set. In the following,
we do not distinguish two random variables $X$ and $Y$ if $X=Y\ $q.s.

The following theorem is the well-known Krylov's estimate (see \cite{Krylov-2,
Me, Situ}).

\begin{theorem}
[Krylov's estimate]\label{Krylov2} Let $(B_{t})_{t\geq0}$ be a $1$-dimensional
$G$-Brownian motion. Then, for $T>0$, $p\geq1$ and for each Borel function
$g$,
\[
\mathbb{\hat{E}}^{G}\left[  \int_{0}^{T}|g(B_{t})|d\langle B\rangle
_{t}\right]  \leq C\left(  \int_{\mathbb{R}}|g(x)|^{p}dx\right)  ^{1/p},
\]
where $C=(\mathbb{\hat{E}}^{G}[\langle B\rangle_{T}])^{(p-1)/p}(\mathbb{\hat
{E}}^{G}[|B_{T}|])^{1/p}$.
\end{theorem}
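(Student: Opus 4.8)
The plan is to combine the representation theorem recalled above, $\mathbb{\hat{E}}^{G}[\cdot]=\sup_{P\in\mathcal{P}}E_{P}[\cdot]$, with the classical one-dimensional Krylov estimate. Under every $P\in\mathcal{P}$ the canonical process $(B_{t})_{t\geq0}$ is a continuous martingale whose quadratic variation coincides $P$-a.s.\ with $\langle B\rangle$, so the classical machinery (Tanaka's formula, occupation times formula, local time) applies $P$-wise. Assuming first that $g$ is bounded, so that $\xi:=\int_{0}^{T}|g(B_{t})|\,d\langle B\rangle_{t}$ is an element of $L_{G}^{1}(\Omega)$, it therefore suffices to prove $E_{P}[\xi]\leq C\big(\int_{\mathbb{R}}|g(x)|^{p}\,dx\big)^{1/p}$ for each $P\in\mathcal{P}$ with $C$ independent of $P$, and then to take the supremum over $P$.

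First I would treat the case $p=1$ with $g\geq0$. Fix $P\in\mathcal{P}$ and denote by $(L_{T}^{x})_{x\in\mathbb{R}}$ the local time of the continuous $P$-martingale $B$ at time $T$. The occupation times formula gives
\[
\int_{0}^{T}g(B_{t})\,d\langle B\rangle_{t}=\int_{\mathbb{R}}g(x)L_{T}^{x}\,dx\qquad P\text{-a.s.},
\]
so by Tonelli's theorem $E_{P}\big[\int_{0}^{T}g(B_{t})\,d\langle B\rangle_{t}\big]=\int_{\mathbb{R}}g(x)E_{P}[L_{T}^{x}]\,dx$. Tanaka's formula $|B_{T}-x|=|x|+\int_{0}^{T}\mathrm{sgn}(B_{t}-x)\,dB_{t}+L_{T}^{x}$ together with $E_{P}\big[\int_{0}^{T}\mathrm{sgn}(B_{t}-x)\,dB_{t}\big]=0$ yields $E_{P}[L_{T}^{x}]=E_{P}[|B_{T}-x|]-|x|\leq E_{P}[|B_{T}|]\leq\mathbb{\hat{E}}^{G}[|B_{T}|]$, uniformly in $x$ and in $P$. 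Hence $E_{P}\big[\int_{0}^{T}g(B_{t})\,d\langle B\rangle_{t}\big]\leq\mathbb{\hat{E}}^{G}[|B_{T}|]\int_{\mathbb{R}}g(x)\,dx$, and taking $\sup_{P}$ proves the theorem when $p=1$.

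For $p>1$, put $q=p/(p-1)$ and apply H\"older's inequality twice. Pathwise, with respect to the finite measure $d\langle B\rangle_{t}$ on $[0,T]$,
\[
\int_{0}^{T}|g(B_{t})|\,d\langle B\rangle_{t}\leq\Big(\int_{0}^{T}|g(B_{t})|^{p}\,d\langle B\rangle_{t}\Big)^{1/p}\langle B\rangle_{T}^{1/q},
\]
and then under $E_{P}$, with exponents $p$ and $q$,
\[
E_{P}\Big[\int_{0}^{T}|g(B_{t})|\,d\langle B\rangle_{t}\Big]\leq\Big(E_{P}\Big[\int_{0}^{T}|g(B_{t})|^{p}\,d\langle B\rangle_{t}\Big]\Big)^{1/p}\big(E_{P}[\langle B\rangle_{T}]\big)^{1/q}.
\]
Applying the $p=1$ bound (under $P$) to $|g|^{p}$ bounds the first factor by $\big(\mathbb{\hat{E}}^{G}[|B_{T}|]\int_{\mathbb{R}}|g(x)|^{p}\,dx\big)^{1/p}$, while $E_{P}[\langle B\rangle_{T}]\leq\mathbb{\hat{E}}^{G}[\langle B\rangle_{T}]$; combining and taking $\sup_{P}$ gives the inequality with $C=(\mathbb{\hat{E}}^{G}[\langle B\rangle_{T}])^{(p-1)/p}(\mathbb{\hat{E}}^{G}[|B_{T}|])^{1/p}$. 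Finally, for an unbounded Borel $g$ with $\int_{\mathbb{R}}|g|^{p}\,dx<\infty$ I would pass from $g_{n}:=(|g|\wedge n)I_{[-n,n]}$ to $|g|$ by monotone convergence, which is valid for $\mathbb{\hat{E}}^{G}=\sup_{P}E_{P}$ by the classical monotone convergence theorem applied under each $P$; the case $\int_{\mathbb{R}}|g|^{p}\,dx=\infty$ is trivial.

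The step I expect to be the main obstacle is the rigorous interface between the $G$-expectation and the $P$-by-$P$ classical computations: one has to know that $\int_{0}^{T}|g(B_{t})|\,d\langle B\rangle_{t}$ is a genuine element of $L_{G}^{1}(\Omega)$ for bounded Borel $g$ (so that the representation theorem applies to it), that $\langle B\rangle$ really agrees with the $P$-quadratic variation of $B$ for every $P\in\mathcal{P}$, and that the objects $\int_{0}^{T}\mathrm{sgn}(B_{t}-x)\,dB_{t}$ and $L_{T}^{x}$ are defined consistently across the family $\mathcal{P}$. These are precisely the points covered by the It\^{o}--Tanaka theory for $G$-Brownian motion developed in \cite{L-Q, Y-S-G} (and underlying the classical references \cite{Krylov-2, Me, Situ}); the remaining ingredients --- Tanaka's formula, the occupation times formula, and H\"older's inequality --- are entirely classical.
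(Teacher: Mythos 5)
Your proposal is correct and follows essentially the same route as the paper's proof: representing $\mathbb{\hat{E}}^{G}$ as $\sup_{P\in\mathcal{P}}E_{P}$, bounding $E_{P}[L_{T}^{P}(a)]$ uniformly via the occupation times formula and Tanaka's formula, and finishing with H\"older's inequality to produce the same constant $C$. Your extra care about the $p=1$ case, the double application of H\"older, and the truncation argument for unbounded $g$ only makes explicit what the paper leaves implicit.
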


\begin{proof}
For reader's convenience, we give a probabilistic proof. For any
$P\in \mathcal{P}$, it is easy to check that $(B_{t})_{t\geq0}$ is a martingale
under $P$. By the occupation times formula, we obtain
\begin{equation}
\int_{0}^{T}|g(B_{t})|^{p}d\langle B\rangle_{t}=\int_{\mathbb{R}}%
|g(a)|^{p}L_{T}^{P}(a)da,\text{ }P\text{-a.s.}, \label{new-nwww-1}%
\end{equation}
where $L_{T}^{P}(a)$ is the local time in $a$ of $B$ under $P$. On the other
hand, by the It\^{o}-Tanaka formula, we have%
\begin{equation}
|B_{T}-a|=|a|+\int_{0}^{T}\text{sgn}(B_{t}-a)dB_{t}+L_{T}^{P}(a),\text{
}P\text{-a.s.} \label{new-nwww-2}%
\end{equation}
Taking expectation on both sides, we get%
\begin{equation}
0\leq E_{P}[L_{T}^{P}(a)]=E_{P}[|B_{T}-a|]-|a|\leq E_{P}[|B_{T}|]\leq
\mathbb{\hat{E}}^{G}[|B_{T}|]. \label{new-nwww-3}%
\end{equation}
Combining (\ref{new-nwww-1}) and (\ref{new-nwww-3}), by H\"{o}lder's
inequality, we obtain
\begin{align*}
E_{P}\left[  \int_{0}^{T}|g(B_{t})|d\langle B\rangle_{t}\right]   &  \leq
C_{1}\left(  E_{P}\left[  \int_{0}^{T}|g(B_{t})|^{p}d\langle B\rangle
_{t}\right]  \right)  ^{1/p}\\
&  \leq C\left(  \int_{\mathbb{R}}|g(x)|^{p}dx\right)  ^{1/p},
\end{align*}
where $C_{1}=(\mathbb{\hat{E}}^{G}[\langle B\rangle_{T}])^{(p-1)/p}$ and
$C=C_{1}(\mathbb{\hat{E}}^{G}[|B_{T}|])^{1/p}$. Since $C$ is independent of
$P$, we get the desired result by taking supremum over $P\in \mathcal{P}$ in
the above inequality.
\end{proof}

Similar to Theorems 4.15 and 4.16 in \cite{H-W-Z}, we have the following
proposition which contains the case $\underline{\sigma}^{2}=0$. The proof is omitted.

\begin{proposition}
\label{Mtilde} Let $(B_{t})_{t\geq0}$ be a $1$-dimensional $G$-Brownian
motion. For each $T>0$, we have the following results:

\begin{description}
\item[(1)] If $\varphi$ is in $L^{p}\left(  \mathbb{R}\right)  $ with
$p\geq1,$ then $\left(  \varphi \left(  B_{t}\right)  \right)  _{t\leq T}%
\in \bar{M}_{G}^{1}(0,T)$. Moreover, for each $\varphi^{\prime}=\varphi$, a.e.,
we have $\left(  \varphi^{\prime}\left(  B_{t}\right)  \right)  _{t\leq
T}=\left(  \varphi \left(  B_{t}\right)  \right)  _{t\leq T}$ in $\bar{M}%
_{G}^{1}(0,T)$.

\item[(2)] Let $\left(  \varphi^{k}\right)  _{k\geq1}$ be a sequence of Borel
measurable functions such that $|\varphi^{k}(x)|\leq \bar{c}\left(
1+|x|^{l}\right)  $, $k\geq1$, for some positive constants $\bar{c}$ and $l.$
If $\varphi^{k}\rightarrow \varphi$, a.e., then for each $p\geq1,$%
\[
\lim_{k\rightarrow \infty}\mathbb{\hat{E}}^{G}\left[  \int_{0}^{T}\left \vert
\varphi^{k}(B_{t})-\varphi(B_{t})\right \vert ^{p}d\langle B\rangle_{t}\right]
=0.
\]

\item[(3)] If $\varphi$ is a Borel measurable function of polynomial growth,
then $(\varphi(B_{t}))_{t\leq T}\in \bar{M}_{G}^{2}(0,T).$
\end{description}
\end{proposition}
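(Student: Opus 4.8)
The plan is to establish the three items in order; the only substantial tool is Krylov's estimate (Theorem \ref{Krylov2}), which I combine with the two-sided bound $\underline{\sigma}^{2}s\le\langle B\rangle_{t+s}-\langle B\rangle_{t}\le\bar{\sigma}^{2}s$ and the finiteness of all moments $\sup_{t\le T}\mathbb{\hat{E}}^{G}[|B_{t}|^{m}]<\infty$ of $G$-Brownian motion. I expect item (2) to be the main obstacle, because the function $x\mapsto|\varphi^{k}(x)-\varphi(x)|^{p}$ is in general not integrable on $\mathbb{R}$, so Theorem \ref{Krylov2} cannot be applied to it directly and a truncation is needed; the rest is essentially bookkeeping.

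For (1), I would first record that when $\varphi\in C_{c}(\mathbb{R})$ the process $(\varphi(B_{t}))_{t\le T}$ is a bounded, quasi-surely continuous adapted process, hence lies in $M_{G}^{1}(0,T)\subset\bar{M}_{G}^{1}(0,T)$ by the usual approximation with step processes $\sum_{i}\varphi(B_{t_{i}})I_{[t_{i},t_{i+1})}$ (using $d\langle B\rangle\le\bar{\sigma}^{2}dt$ and the uniform continuity of $\varphi$). For a general $\varphi\in L^{p}(\mathbb{R})$, choose $\varphi^{k}\in C_{c}(\mathbb{R})$ with $\|\varphi^{k}-\varphi\|_{L^{p}}\to0$; then Theorem \ref{Krylov2} gives
\[
\mathbb{\hat{E}}^{G}\Big[\int_{0}^{T}|\varphi^{k}(B_{t})-\varphi^{j}(B_{t})|\,d\langle B\rangle_{t}\Big]\le C\,\|\varphi^{k}-\varphi^{j}\|_{L^{p}},
\]
so $(\varphi^{k}(B_{t}))_{t\le T}$ is Cauchy in $\bar{M}_{G}^{1}(0,T)$, and the same estimate with $\varphi$ in place of $\varphi^{j}$ identifies the limit as $(\varphi(B_{t}))_{t\le T}$. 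The ``moreover'' assertion is then immediate: if $\varphi'=\varphi$ a.e. then $\int_{\mathbb{R}}|\varphi'(x)-\varphi(x)|^{p}dx=0$, and Theorem \ref{Krylov2} forces $\mathbb{\hat{E}}^{G}[\int_{0}^{T}|\varphi'(B_{t})-\varphi(B_{t})|\,d\langle B\rangle_{t}]=0$.

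For (2), after replacing $\varphi$ and the $\varphi^{k}$ by Borel representatives for which the growth bound and the pointwise convergence hold everywhere — harmless, since altering a function on a Lebesgue-null set changes none of the $\bar{M}_{G}^{p}$-quantities, again by Theorem \ref{Krylov2} — I would split, for $R\ge1$,
\[
\int_{0}^{T}|\varphi^{k}(B_{t})-\varphi(B_{t})|^{p}\,d\langle B\rangle_{t}=\int_{0}^{T}|\varphi^{k}(B_{t})-\varphi(B_{t})|^{p}I_{\{|B_{t}|\le R\}}\,d\langle B\rangle_{t}+\int_{0}^{T}|\varphi^{k}(B_{t})-\varphi(B_{t})|^{p}I_{\{|B_{t}|>R\}}\,d\langle B\rangle_{t}.
\]
On $\{|B_{t}|>R\}$ the integrand is dominated by a constant multiple of $|B_{t}|^{lp}I_{\{|B_{t}|>R\}}\le R^{-1}|B_{t}|^{lp+1}$, so by $d\langle B\rangle\le\bar{\sigma}^{2}dt$ and $\sup_{t\le T}\mathbb{\hat{E}}^{G}[|B_{t}|^{lp+1}]<\infty$ the $\mathbb{\hat{E}}^{G}$ of the second term is at most $C_{T}/R$, uniformly in $k$. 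For fixed $R$, the functions $x\mapsto|\varphi^{k}(x)-\varphi(x)|^{p}I_{[-R,R]}(x)$ are bounded, supported in $[-R,R]$, and converge to $0$ a.e., hence converge to $0$ in $L^{1}(\mathbb{R})$ by dominated convergence; Theorem \ref{Krylov2} with exponent $1$ then shows the $\mathbb{\hat{E}}^{G}$ of the first term tends to $0$ as $k\to\infty$. Choosing $R$ large and then $k$ large, and using the subadditivity of $\mathbb{\hat{E}}^{G}$, yields the claim.

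For (3), I would note that a continuous $\varphi$ of polynomial growth gives a quasi-surely continuous adapted process $(\varphi(B_{t}))_{t\le T}$ with $\varphi(B_{t})\in L_{G}^{2}(\Omega_{t})$ (finiteness of the moments of $G$-Brownian motion), hence in $M_{G}^{2}(0,T)\subset\bar{M}_{G}^{2}(0,T)$ by step-process approximation. For a general Borel $\varphi$ of polynomial growth, mollify: $\varphi^{k}=\varphi*\rho_{1/k}$ is continuous, obeys a polynomial growth bound with a constant independent of $k$, and $\varphi^{k}\to\varphi$ a.e.; item (2) with $p=2$ gives $\mathbb{\hat{E}}^{G}[\int_{0}^{T}|\varphi^{k}(B_{t})-\varphi(B_{t})|^{2}\,d\langle B\rangle_{t}]\to0$, so $(\varphi^{k}(B_{t}))_{t\le T}$ is Cauchy in $\bar{M}_{G}^{2}(0,T)$ with limit $(\varphi(B_{t}))_{t\le T}$, which therefore belongs to $\bar{M}_{G}^{2}(0,T)$.
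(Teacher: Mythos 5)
Your proof is correct. The paper itself omits the argument, referring to Theorems 4.15 and 4.16 of \cite{H-W-Z}, whose proofs rest on the density estimate $\mathbb{\hat{E}}^{G}[I_{(-\varepsilon,\varepsilon)}(B_{t})]\leq C\varepsilon^{2\alpha}t^{-\alpha}$ with $\alpha=\underline{\sigma}^{2}/(2\bar{\sigma}^{2})$ and hence degenerate when $\underline{\sigma}^{2}=0$; your substitution of Krylov's estimate (Theorem \ref{Krylov2}) for that tool is exactly the adaptation the paper intends when it says the proposition ``contains the case $\underline{\sigma}^{2}=0$,'' and your truncation in item (2) --- tail controlled by $d\langle B\rangle\leq\bar{\sigma}^{2}dt$ and moment bounds, compact part controlled by Krylov with exponent $1$ plus dominated convergence in $L^{1}(\mathbb{R})$ --- is the right way to handle the non-integrability of $|\varphi^{k}-\varphi|^{p}$ on all of $\mathbb{R}$. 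The steps you take as standard (that $\varphi(B_{t_{i}})\in L_{G}^{p}(\Omega_{t_{i}})$ for bounded continuous $\varphi$, the step-process approximation for uniformly continuous $\varphi$, and the inequality $\mathbb{\hat{E}}^{G}[\int_{0}^{T}X_{t}\,dt]\leq\int_{0}^{T}\mathbb{\hat{E}}^{G}[X_{t}]\,dt$ via the representing family $\mathcal{P}$) are indeed standard, so no genuine gap remains.
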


Now we can give the It\^{o}-Tanaka formula on the $G$-expectation space. For
each $P\in \mathcal{P}$, we have the following It\^{o}-Tanaka formula under $P$%
\begin{equation}
|B_{t}-a|=|a|+\int_{0}^{t}\text{sgn}(B_{s}-a)dB_{s}+L_{t}^{P}(a),\text{
}P\text{-a.s.} \label{new-nwww-4}%
\end{equation}
By Proposition \ref{Mtilde}, we have $($sgn$(B_{s}-a))_{s\leq t}\in \bar{M}%
_{G}^{2}(0,t)$, which implies that $\int_{0}^{t}$sgn$(B_{s}-a)dB_{s}\in
L_{G}^{2}(\Omega_{t})$ for $t\geq0$. Set%
\[
L_{t}(a)=|B_{t}-a|-|a|-\int_{0}^{t}\text{sgn}(B_{s}-a)dB_{s}\in L_{G}%
^{2}(\Omega_{t}).
\]
Then, by (\ref{new-nwww-4}), we obtain the following It\^{o}-Tanaka formula on
the $G$-expectation space
\begin{equation}
|B_{t}-a|=|a|+\int_{0}^{t}\text{sgn}(B_{s}-a)dB_{s}+L_{t}(a),\text{ q.s.,}
\label{new-nwww-5}%
\end{equation}
and $L_{t}(a)$ is called the local time in $a$ of $B$ under $\mathbb{\hat{E}%
}^{G}[\cdot]$.

\begin{lemma}
\label{le-G1}Let $(B_{t})_{t\geq0}$ be a $1$-dimensional $G$-Brownian motion.
Then $\int_{0}^{t}\text{sgn}(B_{s})dB_{s}$, $t\geq0$, is still a $G$-Brownian motion.
\end{lemma}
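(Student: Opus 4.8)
The plan is to verify that the symmetric martingale $N_t := \int_0^t \mathrm{sgn}(B_s)\,dB_s$ satisfies one of the equivalent characterizations in Theorem \ref{levyG}, and then conclude that $N$ is a $G$-Brownian motion. Since $(B_t)_{t\geq0}$ is a $G$-Brownian motion, in particular it lives on a $G$-expectation space with $\bar G = G$, and we have $\mathrm{sgn}(B_s) \in \bar M_G^2(0,T)$ for each $T>0$ by Proposition \ref{Mtilde}, so the It\^o integral $N_t$ is well defined in $L_G^2(\Omega_t)$; moreover by the It\^o isometry $\hat{\mathbb E}^G[N_t^2] = \hat{\mathbb E}^G[\int_0^t |\mathrm{sgn}(B_s)|^2 d\langle B\rangle_s] = \hat{\mathbb E}^G[\langle B\rangle_t] < \infty$, and $N$ is a symmetric martingale by Lemma \ref{Mcontrol} (or the corresponding property in the $G$-setting). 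So the hypotheses of Theorem \ref{levyG} are met with $d=1$.

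The key step is to identify the quadratic variation $\langle N\rangle_t$. First I would compute, for a partition $\pi_t^n$ of $[0,t]$, the sums $\sum_i (N_{t_{i+1}^n}-N_{t_i^n})^2$ and pass to the $\mathbb L^2$-limit; alternatively, and more cleanly, I would use the general identity for stochastic integrals $\langle N\rangle_t = \int_0^t |\mathrm{sgn}(B_s)|^2\, d\langle B\rangle_s$. Since $\mathrm{sgn}(x)^2 = 1$ for all $x$ (here one must fix the convention $\mathrm{sgn}(0)$, say $\mathrm{sgn}(0)=1$ or $-1$, which is irrelevant because $\{B_s = 0\}$ contributes nothing to $d\langle B\rangle_s$: indeed by the Krylov estimate (Theorem \ref{Krylov2}), $\hat{\mathbb E}^G[\int_0^T I_{\{0\}}(B_s)\,d\langle B\rangle_s] \leq C(\int_{\mathbb R} I_{\{0\}}(x)^p\,dx)^{1/p} = 0$), we get $\langle N\rangle_t = \int_0^t 1\, d\langle B\rangle_s = \langle B\rangle_t$.

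With $\langle N\rangle_t = \langle B\rangle_t$ established, the rest is immediate: for each $a \in \mathbb R$, the process $\tfrac12 a\langle N\rangle_t - G(a)t = \tfrac12 a\langle B\rangle_t - G(a)t$ is a martingale, because $(B_t)_{t\geq0}$ is a $G$-Brownian motion and hence $\langle B\rangle_t - \bar\sigma^2 t$ and $\underline\sigma^2 t - \langle B\rangle_t$ are martingales (Proposition 1.4 in Chapter IV of \cite{Peng 1}), which is exactly condition (2) of Theorem \ref{levyG} (for $d=1$, writing $A = a$ and $\langle AN_t, N_t\rangle = a\langle N\rangle_t$). Applying Theorem \ref{levyG}, $(N_t)_{t\geq0}$ is a $G$-Brownian motion.

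The main obstacle is the careful justification that $\langle N\rangle_t = \langle B\rangle_t$ in the $\mathbb L^2_G$ sense, i.e., handling the identity $\langle \int_0^\cdot \eta_s dB_s\rangle_t = \int_0^t \eta_s^2\, d\langle B\rangle_s$ for $\eta_s = \mathrm{sgn}(B_s) \in \bar M_G^2$, which is not a continuous integrand; this is dealt with by approximating $\mathrm{sgn}$ by bounded Lipschitz functions $\varphi^k$ with $\varphi^k \to \mathrm{sgn}$ a.e. and $|\varphi^k| \leq 1$, using part (2) of Proposition \ref{Mtilde} to pass to the limit in $\bar M_G^2(0,T)$, together with the fact that on the set where $B_s \neq 0$ we have $\varphi^k(B_s)^2 \to 1$, and the Krylov estimate to discard $\{B_s = 0\}$. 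Once this approximation argument is in place the conclusion follows directly from Theorem \ref{levyG}.
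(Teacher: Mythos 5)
Your proposal is correct and follows essentially the same route as the paper: show $N_t=\int_0^t\mathrm{sgn}(B_s)\,dB_s$ is a symmetric martingale in $L_G^2(\Omega_t)$ via Proposition \ref{Mtilde}, identify $\langle N\rangle_t=\int_0^t|\mathrm{sgn}(B_s)|^2\,d\langle B\rangle_s=\langle B\rangle_t$, and invoke Theorem \ref{levyG}. One cosmetic slip: what you actually verify (that $\tfrac12 a\langle N\rangle_t-G(a)t$ is a martingale, plus the $L^2$ increment bound inherited from $\langle N\rangle=\langle B\rangle$) is condition (3) of Theorem \ref{levyG} rather than condition (2), but these are equivalent there, and your extra care with the convention for $\mathrm{sgn}(0)$ via the Krylov estimate is a detail the paper glosses over.
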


\begin{proof}
By Proposition \ref{Mtilde}, we have $($sgn$(B_{s}))_{s\leq t}\in \bar{M}%
_{G}^{2}(0,t)$ for each $t\geq0$. Then we obtain that $\int_{0}^{t}%
\text{sgn}(B_{s})dB_{s}\in L_{G}^{2}(\Omega_{t})$, $t\geq0$, is a symmetric
martingale, and%
\[
\langle \int_{0}^{\cdot}\text{sgn}(B_{s})dB_{s}\rangle_{t}=\int_{0}%
^{t}|\text{sgn}(B_{s})|^{2}d\langle B\rangle_{s}=\langle B\rangle_{t}.
\]
By Theorem \ref{levyG}, we get the desired result.
\end{proof}

The following theorem is the reflection principle for $G$-Brownian motion $B$.

\begin{theorem}
\label{ref-B} Let $(B_{t})_{t\geq0}$ be a $1$-dimensional $G$-Brownian motion
and $(L_{t}(0))_{t\geq0}$ be the local time of $B$ under $\mathbb{\hat{E}}%
^{G}[\cdot]$. Then%
\[
(S_{t}-B_{t},S_{t})_{t\geq0}\overset{d}{=}(|B_{t}|,L_{t}(0))_{t\geq0}%
\]
under $\mathbb{\hat{E}}^{G}[\cdot]$, where $S_{t}=\sup_{s\leq t}B_{s}$ for
$t\geq0$.
\end{theorem}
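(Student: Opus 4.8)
The plan is to mimic L\'{e}vy's classical argument, using the It\^{o}-Tanaka formula \eqref{new-nwww-5} together with the martingale characterization (Theorem \ref{levyG}) to show that $|B|$ and $S-B$ have the same law, and then to identify $S$ with $L(0)$ via a pathwise argument based on the structure of \eqref{new-nwww-5}. First I would set $\beta_{t}:=\int_{0}^{t}\text{sgn}(B_{s})dB_{s}$; by Lemma \ref{le-G1}, $\beta$ is a $1$-dimensional $G$-Brownian motion. The It\^{o}-Tanaka formula at $a=0$ reads $|B_{t}|=\beta_{t}+L_{t}(0)$, so $|B_{t}|-L_{t}(0)=\beta_{t}$. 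The key observation is that $(L_{t}(0))_{t\geq0}$ is a continuous increasing process that increases only on the (q.s.) set $\{t:B_{t}=0\}=\{t:|B_{t}|=0\}$. Therefore the pair $(|B_{t}|,L_{t}(0))$ solves the Skorokhod reflection problem for the path $\beta$: $|B_{t}|=\beta_{t}+L_{t}(0)\geq0$, $L$ is nondecreasing with $L_{0}=0$, and $L$ increases only when $|B_{t}|=0$. By uniqueness of the Skorokhod decomposition (which is a deterministic, pathwise statement, hence valid q.s.), this forces
\[
L_{t}(0)=\sup_{s\leq t}(-\beta_{s})=\Big(\sup_{s\leq t}(-\beta_{s})\Big)\vee 0,\qquad |B_{t}|=\beta_{t}+\sup_{s\leq t}(-\beta_{s}).
\]
Writing $\tilde{S}_{t}:=\sup_{s\leq t}(-\beta_{s})$, we have shown $(|B_{t}|,L_{t}(0))=(\tilde{S}_{t}+\beta_{t},\tilde{S}_{t})$ q.s.

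Next I would pass from $\beta$ to $B$ using the distributional symmetry of $G$-Brownian motion. Since $G(a)=G(-a)$ is even in $a$ (indeed $G$ depends only on $a^{+},a^{-}$ and both $\bar\sigma^{2}$ and $\underline\sigma^{2}$ are symmetric under $a\mapsto -a$ in the sense that replacing $B$ by $-B$ leaves the $G$-heat equation invariant), $(-\beta_{t})_{t\geq0}$ is again a $G$-Brownian motion, and more importantly $(-\beta_{t})_{t\geq0}\overset{d}{=}(B_{t})_{t\geq0}$ as processes. Hence
\[
(\tilde{S}_{t}+\beta_{t},\tilde{S}_{t})_{t\geq0}=\Big(\sup_{s\leq t}(-\beta_{s})-(-\beta_{t}),\ \sup_{s\leq t}(-\beta_{s})\Big)_{t\geq0}\overset{d}{=}\Big(\sup_{s\leq t}B_{s}-B_{t},\ \sup_{s\leq t}B_{s}\Big)_{t\geq0}=(S_{t}-B_{t},S_{t})_{t\geq0}.
\]
Combining this with the q.s. identity $(|B_{t}|,L_{t}(0))=(\tilde{S}_{t}+\beta_{t},\tilde{S}_{t})$ from the previous paragraph gives $(S_{t}-B_{t},S_{t})_{t\geq0}\overset{d}{=}(|B_{t}|,L_{t}(0))_{t\geq0}$, as claimed. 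To make the distributional comparisons rigorous in the sublinear setting one checks equality of $\hat{\mathbb{E}}^{G}[\varphi(\cdots)]$ for all $\varphi\in C_{b.Lip}$ evaluated at finitely many time points; the functionals $\sup_{s\leq t}$ over a finite grid are Lipschitz, and one approximates the continuous sup by grid sups using continuity of paths and the dominated-convergence-type results available on the $G$-expectation space.

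The main obstacle I anticipate is the pathwise Skorokhod-uniqueness step: one must verify q.s. that $L(0)$ is continuous, nondecreasing, and carried by $\{B=0\}$. Continuity and monotonicity of $L_{t}(0)=|B_{t}|-\int_{0}^{t}\text{sgn}(B_{s})dB_{s}$ follow from \eqref{new-nwww-4} under each $P\in\mathcal{P}$ (where $L^{P}$ is the classical semimartingale local time, which has exactly these properties) together with the fact that $L_{t}(0)$ agrees with $L_{t}^{P}(0)$ $P$-a.s. for every $P\in\mathcal{P}$ and $c$ is the upper envelope of $\mathcal{P}$. The support property $\int_{0}^{t}\mathbf{1}_{\{B_{s}\neq0\}}dL_{s}(0)=0$ likewise holds $P$-a.s. for each $P$ (classical local time theory) and hence q.s. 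Once these three properties are established q.s., the deterministic Skorokhod lemma applies path by path and the rest is distributional bookkeeping. A secondary point requiring care is the measurability/integrability needed to invoke the representation theorem and to move between "$P$-a.s. for all $P\in\mathcal{P}$" and "q.s."; these are standard in the $G$-framework (see \cite{D-H-P, H-P}) and Proposition \ref{Mtilde} supplies the integrability of $\text{sgn}(B_{s})$.
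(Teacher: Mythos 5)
Your proposal is correct and follows essentially the same route as the paper: It\^{o}--Tanaka at $a=0$, Lemma \ref{le-G1} to see that $\beta_t=\int_0^t\mathrm{sgn}(B_s)dB_s$ is again a $G$-Brownian motion, the pathwise uniqueness of the Skorokhod reflection map applied to $(|B|,L(0))$, and the symmetry $(-\beta)\overset{d}{=}B$ to transfer the law. The only cosmetic differences are that the paper verifies the support property $\int_0^\infty|B_s|\,dL_s(0)=0$ via It\^{o}'s formula for $|B_t|^2$ rather than quoting classical local-time theory under each $P\in\mathcal{P}$, and handles the transfer of distribution through the Skorokhod map by noting its Lipschitz continuity in the uniform topology rather than by grid approximation of the running supremum; both are equivalent in substance.
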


\begin{remark}
Specially, $S_{t}-B_{t}\overset{d}{=}|B_{t}|$, i.e., $\sup_{s\leq t}%
(B_{s}-B_{t})\overset{d}{=}|B_{t}|$.
\end{remark}

In order to prove this theorem, we need the following well-known Skorokhod
lemma. Let $\mathcal{D}_{0}([0,\infty))$ be the space of real-valued right
continuous with left limit (RCLL) paths $(\omega_{t})_{t\geq0}$ with
$\omega_{0}=0$.

\begin{lemma}
[Skorokhod]\label{Skorohod} Let $x\in \mathcal{D}_{0}([0,\infty))$ be given.
Then there exists a unique pair $(y,z)\in \mathcal{D}_{0}([0,\infty
);\mathbb{R}^{2})$ such that

\begin{description}
\item[(a)] $z(t)=x(t)+y(t)\geq0$ for $t\geq0$;

\item[(b)] $y$ is increasing with $y(0)=0;$

\item[(c)] $\int_{0}^{\infty}z(t)dy(t)=0.$
\end{description}

Moreover, $(y,z)$ can be expressed as
\begin{equation}
y(t)=\sup_{0\leq s\leq t}(-x(s)),\ z(t)=x(t)+\sup_{0\leq s\leq t}%
(-x(s)),\ t\geq0. \label{new-nwww-6}%
\end{equation}

\end{lemma}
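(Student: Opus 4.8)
The plan is to treat Lemma \ref{Skorohod} as a purely pathwise (deterministic) statement: prove existence by exhibiting the explicit pair and verifying (a)--(c), and prove uniqueness by a minimality argument driven by condition (c). Write $m(s):=-x(s)$ and set $y(t):=\sup_{0\le s\le t}m(s)$, $z(t):=x(t)+y(t)$, exactly as in (\ref{new-nwww-6}). First I would check that this pair lies in $\mathcal{D}_0([0,\infty);\mathbb{R}^2)$ and satisfies (a)--(c). Since $x(0)=0$ we have $y(0)=m(0)=0$, and $y$ is non-decreasing as a running supremum, giving (b); right-continuity of $y$ (hence $y\in\mathcal{D}_0$, and then $z=x+y\in\mathcal{D}_0$) follows from right-continuity of $x$, because the supremum over $[0,t]$ cannot drop as $t\downarrow t_0$ once $m(t_0+)=m(t_0)$. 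For (a), $z(t)=y(t)+x(t)\ge m(t)+x(t)=0$. The only substantive point is (c): I claim the Stieltjes measure $dy$ is carried by $\{z=0\}$. At an atom $t_0$ of $dy$ (a jump of $y$), the increase $y(t_0)>y(t_0-)=\sup_{[0,t_0)}m$ forces $y(t_0)=m(t_0)$, so $z(t_0)=0$; at a point $t_0$ of continuous increase one again has $m(t_0)=y(t_0)$, since otherwise right-continuity of $x$ would give $m<y(t_0)$, hence $y$ constant, on a right-neighbourhood of $t_0$. Thus $z=0$ $dy$-a.e., and $\int_0^\infty z\,dy=0$.

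For uniqueness, let $(y,z)$ be any pair satisfying (a)--(c), and let $(y^*,z^*)$ denote the explicit solution above. The key observation is that (a) together with monotonicity of $y$ forces $y\ge y^*$: indeed $z(s)\ge0$ gives $y(s)\ge m(s)$ for every $s$, and since $y$ is non-decreasing, $y(t)\ge y(s)\ge m(s)$ for all $s\le t$, whence $y(t)\ge\sup_{s\le t}m(s)=y^*(t)$. For the reverse inequality I would use (c) in the form ``$y$ increases only on $\{z=0\}$'': since $z\ge0$ and $\int z\,dy=0$, the set $\{z>0\}$ is $dy$-null, so $dy$ is carried by $\{z=0\}$, where $y(s)=m(s)\le y^*(t)$ for $s\le t$. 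Because $y$ is non-decreasing and right-continuous and all of its increase occurs at points where its value is $\le y^*(t)$, it cannot overshoot $y^*(t)$; hence $y(t)\le y^*(t)$. Combining the two bounds gives $y=y^*$ and then $z=x+y=z^*$, which proves uniqueness and simultaneously confirms the representation (\ref{new-nwww-6}).

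The main obstacle I anticipate is the c\`adl\`ag bookkeeping inside condition (c): both for existence (ruling out that $dy$ charges $\{z>0\}$) and for the final step of uniqueness (promoting the $dy$-a.e. bound $y(s)\le y^*(t)$ to the pointwise bound $y(t)\le y^*(t)$), one must cover $\{z>0\}$ by the zero-measure right-neighbourhoods produced above, which is routine but fiddly. In the continuous case everything is transparent, and one could instead run the classical energy uniqueness proof: for two solutions $g:=z_1-z_2=y_1-y_2$ is of bounded variation with $g(0)=0$, and $d(g^2)=2g\,(dy_1-dy_2)$, where $g\,dy_1=(z_1-z_2)\,dy_1=-z_2\,dy_1\le0$ and $-g\,dy_2=(z_2-z_1)\,dy_2=-z_1\,dy_2\le0$ by (c) and nonnegativity, so $g^2$ is non-increasing from $0$ and $g\equiv0$. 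For genuinely c\`adl\`ag $x$ this computation picks up jump corrections $\sum(\Delta g)^2$ that are not sign-definite, which is precisely why I would prefer the monotonicity/minimality route above, where the explicit running-supremum formula sidesteps the Stieltjes jump terms entirely.
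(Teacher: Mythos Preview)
The paper does not prove Lemma~\ref{Skorohod}; it is quoted as ``the following well-known Skorokhod lemma'' and used as a black box. So there is nothing to compare against, and your task is simply to give a correct self-contained proof. Your minimality argument does this: the verification of (a)--(c) for the explicit pair is right, and for uniqueness the two inequalities $y\ge y^*$ (from (a)+(b)) and $y\le y^*$ (from (c)) are both valid. The ``cannot overshoot'' step can be made rigorous exactly as you indicate, by looking at $\tau=\inf\{s:y(s)>y^*(t)\}$ and splitting into the cases $y(\tau)>y^*(t)$ (then $\tau$ is an atom of $dy$, forcing $z(\tau)=0$ and hence $y(\tau)=m(\tau)\le y^*(t)$, contradiction) and $y(\tau)=y^*(t)$ (then $dy((\tau,\tau+\varepsilon])>0$ for all $\varepsilon>0$, so $\{z=0\}$ meets $(\tau,\tau+\varepsilon]$, again giving $y(s)=m(s)\le y^*(t)$ at some $s>\tau$, contradicting $y(s)>y^*(t)$).

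One small correction to your closing remark: the energy argument \emph{does} go through in the c\`adl\`ag case, provided you integrate $g(s)$ rather than $g(s-)$. For a c\`adl\`ag BV function $g$ with $g(0)=0$ one has
\[
\int_{(0,t]} g(s)\,dg(s)=\tfrac12\Bigl(g(t)^2+\sum_{0<s\le t}(\Delta g(s))^2\Bigr)\ge 0,
\]
while writing $g=z_1-z_2=y_1-y_2$ and using $\int z_i\,dy_i=0$ gives
\[
\int_{(0,t]} g\,dg=\int g\,dy_1-\int g\,dy_2=-\int z_2\,dy_1-\int z_1\,dy_2\le 0.
\]
Hence both sides vanish and $g\equiv 0$. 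The jump correction $\sum(\Delta g)^2$ is nonnegative and sits on the same side as $g(t)^2$, so it helps rather than hurts. Your minimality route is perfectly fine, but you need not avoid the energy computation on account of jumps.
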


\textbf{Proof of Theorem \ref{ref-B}. }For each $x\in \mathcal{D}_{0}%
([0,\infty))$, let $(y,z)\in \mathcal{D}_{0}([0,\infty);\mathbb{R}^{2})$ be
defined as in (\ref{new-nwww-6}). Define the mapping $F:$ $\mathcal{D}%
_{0}([0,\infty))\rightarrow \mathcal{D}_{0}([0,\infty);\mathbb{R}^{2})$ as%
\[
F(x)=(y,z).
\]

By the It\^{o}-Tanaka formula on the $G$-expectation space, we have
\[
|B_{t}|=\int_{0}^{t}\text{sgn}(B_{s})dB_{s}+L_{t}(0),\text{ q.s.}%
\]
Applying It\^{o}'s formula to $|B_{t}|^{2}$ and $|B_{t}|^{2}=2\int_{0}%
^{t}B_{s}dB_{s}+\langle B\rangle_{t}$, we can get%
\[
\int_{0}^{\infty}|B_{s}|dL_{s}(0)=0,\text{ q.s.}%
\]
Thus, by Lemma \ref{Skorohod}, we obtain%
\begin{equation}
F[(\int_{0}^{t}\text{sgn}(B_{s})dB_{s})_{t\geq0}]=(L_{t}(0),|B_{t}|)_{t\geq
0},\text{ q.s.} \label{new-nwww-7}%
\end{equation}
On the other hand,%
\[
S_{t}-B_{t}=-B_{t}+S_{t}.
\]
Noting that $S_{t}=\sup_{0\leq s\leq t}(-(-B_{s}))$, by Lemma \ref{Skorohod},
we have%
\begin{equation}
F[(-B_{t})_{t\geq0}]=(S_{t},S_{t}-B_{t})_{t\geq0}. \label{new-nwww-8}%
\end{equation}

By Lemma \ref{le-G1}, we know
\begin{equation}
(\int_{0}^{t}\text{sgn}(B_{s})dB_{s})_{t\geq0}\overset{d}{=}(B_{t})_{t\geq
0}\overset{d}{=}(-B_{t})_{t\geq0}. \label{new-nwww-9}%
\end{equation}
By (\ref{new-nwww-6}), it is easy to check that $F$ is Lipschitz continuous
under the uniform topology. Thus, by (\ref{new-nwww-7}), (\ref{new-nwww-8})
and (\ref{new-nwww-9}), we can deduce the desired result. $\Box$

Let $\tilde{G}:\mathbb{R}\rightarrow \mathbb{R}$ satisfy (\ref{neweq-22}). In
the following, we extend the reflection principle to $\tilde{G}$-Brownian
motion $B$. We need the following lemma.

\begin{lemma}
\label{le-G2}Let $(B_{t})_{t\geq0}$ be a $1$-dimensional $\tilde{G}$-Brownian
motion. Then $\int_{0}^{t}\text{sgn}(B_{s})dB_{s}$, $t\geq0$, is still a
$\tilde{G}$-Brownian motion.
\end{lemma}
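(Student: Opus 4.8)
The plan is to show directly that $N_t:=\int_0^t\mathrm{sgn}(B_s)\,dB_s$ has, under $\hat{\mathbb E}^{\tilde G}$, the same finite-dimensional increment distributions as the $\tilde G$-Brownian motion $B$; since $N_0=0$ and $B$ has stationary and independent increments, this says precisely that $N$ is a $1$-dimensional $\tilde G$-Brownian motion. Unlike in Lemma \ref{le-G1}, no L\'evy-type martingale characterization is available for $\tilde G$-Brownian motion, so the argument runs instead through a discrete approximation of $\mathrm{sgn}(B_\cdot)$ combined with the reflection symmetry of $\tilde G$-Brownian increments, the nondegeneracy condition being removed afterwards by Krylov's estimate.

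First I would isolate two ingredients. From the domination $|\hat{\mathbb E}^{\tilde G}[X]|\le\hat{\mathbb E}^{G}[|X|]$ implied by (\ref{neweq-211}), Krylov's estimate (Theorem \ref{Krylov2}) together with the reasoning behind Proposition \ref{Mtilde} gives $(\mathrm{sgn}(B_s))_{s\le t}\in\bar M^{2}_{G}(0,t)$, so that $N_t\in L^{2}_{G}(\Omega_t)$ is well defined and $N$ is a symmetric martingale. The second ingredient is the reflection invariance of $\tilde G$-increments: if $w$ solves $\partial_t w-\tilde G(\partial_{xx}^{2}w)=0$ then so does $(t,x)\mapsto w(t,-x)$, whence $\hat{\mathbb E}^{\tilde G}[\psi(B_{t+s}-B_t)]=\hat{\mathbb E}^{\tilde G}[\psi(-(B_{t+s}-B_t))]$ for every $\psi\in C_{b.Lip}(\mathbb R)$. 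Combined with the independence relation (\ref{new-newww-1}) for $\hat{\mathbb E}^{\tilde G}_t$, this yields the building block: for $\xi\in L^{2}_{G}(\Omega_{t_i})$ valued in $\{-1,1\}$, $X\in(L^{2}_{G}(\Omega_{t_i}))^{\ell}$ and $\varphi\in C_{b.Lip}$,
\[
\hat{\mathbb E}^{\tilde G}_{t_i}\big[\varphi\big(X,\xi(B_{t_{i+1}}-B_{t_i})\big)\big]=\hat{\mathbb E}^{\tilde G}_{t_i}\big[\varphi\big(X,B_{t_{i+1}}-B_{t_i}\big)\big].
\]

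Next, assume the nondegeneracy condition $\underline\sigma^{2}>0$, so $c(B_t=0)=0$ and $\mathrm{sgn}(B_\cdot)$ is quasi-continuous. By the discrete approximation of $\mathrm{sgn}(B_\cdot)$ furnished by Lemma \ref{sgnB} (based on the estimate of \cite{H-W-Z}) and the It\^o isometry, the simple-integral processes $N^{\pi}_t:=\int_0^t\big(\sum_j\mathrm{sgn}(B_{s_j})I_{[s_j,s_{j+1})}(s)\big)dB_s$ converge to $N_t$ in $L^{2}_{G}$ as $\mu(\pi)\to0$. Fixing a partition $\pi$ refining $\{t_0,\ldots,t_n\}$, the increments of $N^{\pi}$ along $\pi$ are $\mathrm{sgn}(B_{s_j})(B_{s_{j+1}}-B_{s_j})$ with $\mathrm{sgn}(B_{s_j})\in\{-1,1\}$ q.s.; computing $\hat{\mathbb E}^{\tilde G}[\Phi(N^{\pi}_{s_1}-N^{\pi}_{s_0},\ldots,N^{\pi}_{s_m}-N^{\pi}_{s_{m-1}})]$ by iterating the conditional expectations $\hat{\mathbb E}^{\tilde G}_{s_{m-1}},\ldots,\hat{\mathbb E}^{\tilde G}_{s_0}$ and applying the building block at each stage, one checks by downward induction that exactly the same bounded Lipschitz function arises at every stage as for $B$ itself, the only change being that $\mathrm{sgn}(B_{s_j})(B_{s_{j+1}}-B_{s_j})$ replaces $B_{s_{j+1}}-B_{s_j}$ in the arguments. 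Hence the $N^{\pi}$-increments and the $B$-increments along $\pi$ have the same $\hat{\mathbb E}^{\tilde G}$-law; restricting the test function to the coordinates $t_1,\ldots,t_n$ and letting $\mu(\pi)\to0$, the Lipschitz continuity of $\Phi$ and the $L^{1}_{G}$-convergence $N^{\pi}_{t_i}\to N_{t_i}$ give $\hat{\mathbb E}^{\tilde G}[\Phi(N_{t_1}-N_{t_0},\ldots,N_{t_n}-N_{t_{n-1}})]=\hat{\mathbb E}^{\tilde G}[\Phi(B_{t_1}-B_{t_0},\ldots,B_{t_n}-B_{t_{n-1}})]$, so $N$ is a $\tilde G$-Brownian motion whenever $\underline\sigma^{2}>0$.

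Finally, for a general $\tilde G$ I would remove the nondegeneracy by perturbation. Put $\tilde G_\varepsilon(a)=\tilde G(a)+\frac{1}{2}\varepsilon^{2}a$ (again monotone with $\tilde G_\varepsilon(0)=0$ and dominated by the nondegenerate $G_\varepsilon$), and on the auxiliary product space used in the proof of Theorem \ref{levyG} write a $\tilde G_\varepsilon$-Brownian motion as $B+\varepsilon W$ with $W$ a classical Brownian motion carried alongside $B$. By the nondegenerate case, $N^{\varepsilon}_t:=\int_0^t\mathrm{sgn}(B_s+\varepsilon W_s)\,d(B_s+\varepsilon W_s)$ is a $\tilde G_\varepsilon$-Brownian motion; as $\varepsilon\downarrow0$ the law of $\tilde G_\varepsilon$-Brownian motion converges to that of $\tilde G$-Brownian motion (by an estimate of the type in Lemma \ref{newlem3}), while $N^{\varepsilon}_t\to N_t$ since $\varepsilon\int_0^t\mathrm{sgn}(B_s+\varepsilon W_s)dW_s=O(\varepsilon)$ and the main error $\int_0^t[\mathrm{sgn}(B_s+\varepsilon W_s)-\mathrm{sgn}(B_s)]dB_s$ is controlled, after conditioning on the $W$-path and using $|\mathrm{sgn}(B_s+\varepsilon W_s)-\mathrm{sgn}(B_s)|^{2}\le 4\,I_{\{|B_s|\le\varepsilon\sup_{r\le T}|W_r|\}}$, by $\hat{\mathbb E}^{\tilde G}[\int_0^{T}I_{\{|B_s|\le\delta\}}d\langle B\rangle_s]\le\hat{\mathbb E}^{G}[\int_0^{T}I_{\{|B_s|\le\delta\}}d\langle B\rangle_s]\le C(2\delta)^{1/p}\to0$ (Theorem \ref{Krylov2} and (\ref{neweq-211})). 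Passing to the limit identifies the law of $N$ with that of the $\tilde G$-Brownian motion. The main obstacle is exactly this degenerate case: when $\underline\sigma^{2}=0$ the process $\mathrm{sgn}(B_\cdot)$ need no longer be quasi-continuous (the event $\{B_s=0\}$ need not be polar), so the mesh-refinement of the nondegenerate step breaks down and one must control the occupation time of small neighborhoods of $0$ by the quadratic variation $\langle B\rangle$ — which is precisely what Krylov's estimate, transported to $\hat{\mathbb E}^{\tilde G}$ through the domination (\ref{neweq-211}), makes possible; everything else is bookkeeping around the reflection symmetry of the $\tilde G$-increments and the independence relation (\ref{new-newww-1}).
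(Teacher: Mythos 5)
Your proposal is correct and follows essentially the same route as the paper: the nondegenerate case via the discrete approximation of Lemma \ref{sgnB} combined with the symmetry $-(B_{t_{i+1}}-B_{t_i})\overset{d}{=}B_{t_{i+1}}-B_{t_i}$ and the relation (\ref{new-newww-1}) (the paper phrases this as the single conditional identity $\mathbb{\hat{E}}_{t}^{\tilde{G}}[\varphi(\int_{t}^{T}\mathrm{sgn}(B_{s})dB_{s})]=u^{\varphi}(T-t,0)$ rather than full finite-dimensional laws, which is equivalent by iteration), and the degenerate case via the perturbation $B+\varepsilon W$ on the auxiliary product space together with Krylov's estimate and the domination (\ref{neweq-211}). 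The only deviation is technical: where you bound $|\mathrm{sgn}(B_{s}+\varepsilon W_{s})-\mathrm{sgn}(B_{s})|^{2}$ by $4I_{\{|B_{s}|\leq\varepsilon\sup_{r\leq T}|W_{r}|\}}$ and invoke Krylov's estimate with a random radius, the paper instead inserts the Lipschitz truncation $\phi_{\varepsilon}$ and splits into three terms so that Krylov's estimate is only ever applied to the deterministic indicator $I_{(-\varepsilon,\varepsilon)}$; your version needs a small extra step (e.g.\ truncating $\sup_{r\leq T}|W_{r}|$) but is otherwise the same argument.
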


Under nonlinear expectation space, we do not obtain the L\'{e}vy's martingale
characterization theorem. The key problem lies in the fact that the solution
of the following PDE%
\[
\partial_{t}u-\tilde{G}(\partial_{xx}^{2}u)-\frac{1}{2}\varepsilon^{2}%
\partial_{xx}^{2}u=0,\ u(0,x)=\varphi(x)
\]
may not be regular. In the following, we give a new proof. First we need the
following lemma.

\begin{lemma}
\label{sgnB} Suppose $\underline{\sigma}^{2}>0$ in $G$. Let $\pi_{n}%
=\{t_{0}^{n},t_{1}^{n},\ldots,t_{n}^{n}\}$ be a partition of $[0,T]$ and
$\mu(\pi_{n})$ be the diameter of $\pi_{n}$. Then%
\[
\mathbb{\hat{E}}^{G}\left[  \displaystyle \int_{0}^{T}\left \vert
\sum \limits_{i=0}^{n-1}\text{sgn}(B_{t_{i}^{n}})I_{[t_{i}^{n},t_{i+1}^{n}%
)}(t)-\text{sgn}(B_{t})\right \vert ^{2}dt\right]  \rightarrow0\text{ as }%
\mu(\pi_{n})\rightarrow0.
\]

\end{lemma}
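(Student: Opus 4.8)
The plan is to reduce the statement to two facts: the continuity of the canonical process $B$, which confines any change of sign of $B$ to a small neighbourhood of the unique discontinuity point $0$ of $\text{sgn}$, and Krylov's estimate (Theorem \ref{Krylov2}), which together with the nondegeneracy $\underline{\sigma}^{2}>0$ controls the time $B$ spends in that neighbourhood. Fix $\delta\in(0,1)$. On each interval $[t_{i}^{n},t_{i+1}^{n})$ the step process equals $\text{sgn}(B_{t_{i}^{n}})$, so
\[
\int_{0}^{T}\Big|\sum_{i=0}^{n-1}\text{sgn}(B_{t_{i}^{n}})I_{[t_{i}^{n},t_{i+1}^{n})}(t)-\text{sgn}(B_{t})\Big|^{2}dt=\sum_{i=0}^{n-1}\int_{t_{i}^{n}}^{t_{i+1}^{n}}\big|\text{sgn}(B_{t_{i}^{n}})-\text{sgn}(B_{t})\big|^{2}dt .
\]
The pathwise observation I would use is: if $\text{sgn}(B_{t_{i}^{n}})\neq\text{sgn}(B_{t})$ for some $t\in[t_{i}^{n},t_{i+1}^{n}]$, then by continuity $B$ vanishes somewhere on $[t_{i}^{n}\wedge t,\,t_{i}^{n}\vee t]$, hence $|B_{t_{i}^{n}}|\leq\omega_{i}^{n}:=\sup_{t_{i}^{n}\leq s\leq t_{i+1}^{n}}|B_{s}-B_{t_{i}^{n}}|$ and therefore $|B_{t}|\leq 2\omega_{i}^{n}$. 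Setting $A_{i}^{n,\delta}:=\{\omega_{i}^{n}>\delta\}$, it follows that $|\text{sgn}(B_{t_{i}^{n}})-\text{sgn}(B_{t})|^{2}\leq 4I_{[-2\delta,2\delta]}(B_{t})$ on $(A_{i}^{n,\delta})^{c}$, while the left-hand side never exceeds $4$; summing over $i$ gives
\[
\int_{0}^{T}\Big|\sum_{i=0}^{n-1}\text{sgn}(B_{t_{i}^{n}})I_{[t_{i}^{n},t_{i+1}^{n})}(t)-\text{sgn}(B_{t})\Big|^{2}dt\leq 4\int_{0}^{T}I_{[-2\delta,2\delta]}(B_{t})\,dt+4\sum_{i=0}^{n-1}(t_{i+1}^{n}-t_{i}^{n})I_{A_{i}^{n,\delta}} .
\]

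For the first term, since $\underline{\sigma}^{2}>0$ we have $dt\leq\underline{\sigma}^{-2}d\langle B\rangle_{t}$, so Theorem \ref{Krylov2} with $p=1$ and $g=I_{[-2\delta,2\delta]}$ yields
\[
\mathbb{\hat{E}}^{G}\Big[\int_{0}^{T}I_{[-2\delta,2\delta]}(B_{t})\,dt\Big]\leq\underline{\sigma}^{-2}\,\mathbb{\hat{E}}^{G}\Big[\int_{0}^{T}I_{[-2\delta,2\delta]}(B_{t})\,d\langle B\rangle_{t}\Big]\leq 4\underline{\sigma}^{-2}\,\mathbb{\hat{E}}^{G}[|B_{T}|]\,\delta ,
\]
which is $O(\delta)$ uniformly in the partition. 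For the second term, sub-additivity gives $\mathbb{\hat{E}}^{G}[\sum_{i}(t_{i+1}^{n}-t_{i}^{n})I_{A_{i}^{n,\delta}}]\leq\sum_{i}(t_{i+1}^{n}-t_{i}^{n})c(A_{i}^{n,\delta})$, and since $(B_{t_{i}^{n}+r}-B_{t_{i}^{n}})_{r\geq0}\overset{d}{=}(B_{r})_{r\geq0}$, Chebyshev's inequality combined with the maximal inequality $\mathbb{\hat{E}}^{G}[\sup_{0\leq r\leq h}|B_{r}|^{2}]\leq Ch$ for $G$-Brownian motion gives
\[
c(A_{i}^{n,\delta})=c\big(\omega_{i}^{n}>\delta\big)\leq\delta^{-2}\,\mathbb{\hat{E}}^{G}\Big[\sup_{0\leq r\leq t_{i+1}^{n}-t_{i}^{n}}|B_{r}|^{2}\Big]\leq C\delta^{-2}(t_{i+1}^{n}-t_{i}^{n}),
\]
so that $\sum_{i}(t_{i+1}^{n}-t_{i}^{n})c(A_{i}^{n,\delta})\leq C\delta^{-2}T\mu(\pi_{n})$.

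Combining the two estimates,
\[
\mathbb{\hat{E}}^{G}\Big[\int_{0}^{T}\Big|\sum_{i=0}^{n-1}\text{sgn}(B_{t_{i}^{n}})I_{[t_{i}^{n},t_{i+1}^{n})}(t)-\text{sgn}(B_{t})\Big|^{2}dt\Big]\leq 16\underline{\sigma}^{-2}\,\mathbb{\hat{E}}^{G}[|B_{T}|]\,\delta+4CT\delta^{-2}\mu(\pi_{n}).
\]
Letting $\mu(\pi_{n})\to0$ with $\delta$ fixed and then $\delta\to0$ completes the argument. The step I expect to be the main obstacle is precisely the one this argument is built around: because $\text{sgn}$ jumps at $0$, Riemann sums of $\text{sgn}(B_{t})$ need not converge in the naive pointwise way, so one has to localize the discrepancy to the set $\{|B_{t}|\leq 2\delta\}$ (up to a large-oscillation event of capacity $O(\mu(\pi_{n}))$) and then invoke Krylov's estimate together with $\underline{\sigma}^{2}>0$ to see that $B$ spends only time $O(\delta)$ there. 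The only auxiliary input is the standard $G$-Brownian-motion maximal inequality quoted above, which I would record separately.
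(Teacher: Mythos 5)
Your proof is correct, but it takes a genuinely different route from the paper's. The paper mollifies $\text{sgn}$ by the Lipschitz function $\phi_{\varepsilon}(x)=I_{[\varepsilon,\infty)}(x)+\frac{x}{\varepsilon}I_{(-\varepsilon,\varepsilon)}(x)-I_{(-\infty,-\varepsilon]}(x)$, splits the discrepancy into three terms ($\text{sgn}-\phi_{\varepsilon}$ at the partition points, the Riemann-sum error for the Lipschitz function $\phi_{\varepsilon}$, and $\phi_{\varepsilon}-\text{sgn}$ at the running time), and controls the two $\text{sgn}$-versus-$\phi_{\varepsilon}$ terms by the one-dimensional small-ball estimate $\hat{\mathbb{E}}^{G}[I_{(-\varepsilon,\varepsilon)}(B_{t})]\leq\exp(\frac{1}{2\bar{\sigma}^{2}})\varepsilon^{2\alpha}t^{-\alpha}$ with $\alpha=\underline{\sigma}^{2}/(2\bar{\sigma}^{2})$, imported from Example 3.9 of \cite{H-W-Z}; the middle term is handled by the Lipschitz constant $1/\varepsilon$ and $\hat{\mathbb{E}}^{G}[|B_{t}-B_{t_{i}^{n}}|^{2}]\leq\bar{\sigma}^{2}(t-t_{i}^{n})$. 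You instead argue pathwise: a sign change on $[t_{i}^{n},t]$ forces a zero of $B$ there, so off a large-oscillation event the discrepancy is supported on $\{|B_{t}|\leq2\delta\}$; the occupation time of $[-2\delta,2\delta]$ is $O(\delta)$ by Krylov's estimate (Theorem \ref{Krylov2}) combined with $dt\leq\underline{\sigma}^{-2}d\langle B\rangle_{t}$, and the oscillation events have total weighted capacity $O(\delta^{-2}\mu(\pi_{n}))$ by Chebyshev's inequality plus a maximal inequality. Both arguments use $\underline{\sigma}^{2}>0$ in an essential way --- the paper through the exponent $\alpha>0$, you through the comparison of $dt$ with $d\langle B\rangle_{t}$ --- and both have the same structure of fixing the small parameter, letting $\mu(\pi_{n})\rightarrow0$, and then sending the small parameter to $0$. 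Your version is more self-contained relative to this paper (Krylov's estimate is proved here, whereas the small-ball estimate is cited from elsewhere), at the price of two extra ingredients that should be made explicit: quasi-sure continuity of the paths of $B$ (needed for the intermediate-value argument) and the maximal inequality $\hat{\mathbb{E}}^{G}[\sup_{r\leq h}|B_{r}|^{2}]\leq4\bar{\sigma}^{2}h$, which follows from Doob's inequality applied under each $P\in\mathcal{P}$. Note also that $I_{A_{i}^{n,\delta}}$ need not belong to $L_{G}^{1}(\Omega)$, so the second term must be estimated, as you in fact do, directly through the capacity $c$ and the representation $\hat{\mathbb{E}}^{G}=\sup_{P\in\mathcal{P}}E_{P}$ rather than through the extended sublinear expectation.
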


\begin{proof}
For each fixed $\varepsilon>0$, define%
\begin{equation}
\phi_{\varepsilon}(x)=I_{[\varepsilon,\infty)}(x)+\frac{x}{\varepsilon
}I_{(-\varepsilon,\varepsilon)}(x)-I_{(-\infty,-\varepsilon]}(x)\text{ for
}x\in \mathbb{R}. \label{new-nvcc-1}%
\end{equation}
It is easy to verify that $|\phi_{\varepsilon}(x)-\phi_{\varepsilon}%
(x^{\prime})|\leq \frac{1}{\varepsilon}|x-x^{\prime}|$ and $|\text{sgn}%
(x)-\phi_{\varepsilon}(x)|\leq I_{(-\varepsilon,\varepsilon)}(x)$. By Example
3.9 in {\cite{H-W-Z}}, we have
\[
\hat{\mathbb{E}}[I_{(-\varepsilon,\varepsilon)}(B_{t})]\leq \exp(\frac
{1}{2\overline{\sigma}^{2}})\frac{\varepsilon^{2\alpha}}{t^{\alpha}},
\]
where $\alpha=\frac{\underline{\sigma}^{2}}{2\overline{\sigma}^{2}}\in
(0,\frac{1}{2})$. Then
\begin{align*}
&  \mathbb{\hat{E}}^{G}\left[  \displaystyle \int_{0}^{T}\left \vert
\sum \limits_{i=0}^{n-1}\text{sgn}(B_{t_{i}^{n}})I_{[t_{i}^{n},t_{i+1}^{n}%
)}(t)-\text{sgn}(B_{t})\right \vert ^{2}dt\right] \\
&  \leq3\left \{  \mathbb{\hat{E}}^{G}\left[  \int_{0}^{T}\left \vert
I_{1}^{\varepsilon,n}(t)\right \vert ^{2}dt\right]  +\mathbb{\hat{E}}%
^{G}\left[  \int_{0}^{T}\left \vert I_{2}^{\varepsilon,n}(t)\right \vert
^{2}dt\right]  +\mathbb{\hat{E}}^{G}\left[  \int_{0}^{T}\left \vert
I_{3}^{\varepsilon}(t)\right \vert ^{2}dt\right]  \right \} \\
&  \leq3\left \{  \sum_{i=0}^{n-1}\mathbb{\hat{E}}^{G}\left[  I_{(-\varepsilon
,\varepsilon)}(B_{t_{i}^{n}})\right]  (t_{i+1}^{n}-t_{i}^{n})+\sum_{i=0}%
^{n-1}\frac{1}{\varepsilon^{2}}\int_{t_{i}^{n}}^{t_{i+1}^{n}}\mathbb{\hat{E}%
}^{G}\left[  |B_{t}-B_{t_{i}^{n}}|^{2}\right]  dt+\int_{0}^{T}\mathbb{\hat{E}%
}^{G}\left[  I_{(-\varepsilon,\varepsilon)}(B_{t})\right]  dt\right \} \\
&  \leq3\left \{  \exp(\frac{1}{2\overline{\sigma}^{2}})\varepsilon^{2\alpha
}\sum_{i=0}^{n-1}\frac{1}{(t_{i}^{n})^{\alpha}}(t_{i+1}^{n}-t_{i}^{n}%
)+\frac{\overline{\sigma}^{2}}{\varepsilon^{2}}\mu(\pi_{n})T+\exp(\frac
{1}{2\overline{\sigma}^{2}})\varepsilon^{2\alpha}\int_{0}^{T}\frac
{1}{t^{\alpha}}dt\right \}  ,
\end{align*}
where%
\[%
\begin{array}
[c]{l}%
I_{1}^{\varepsilon,n}(t)=\sum_{i=0}^{n-1}\text{sgn}(B_{t_{i}^{n}}%
)I_{[t_{i}^{n},t_{i+1}^{n})}(t)-\sum_{i=0}^{n-1}\phi_{\varepsilon}%
(B_{t_{i}^{n}})I_{[t_{i}^{n},t_{i+1}^{n})}(t),\\
I_{2}^{\varepsilon,n}(t)=\sum_{i=0}^{n-1}\phi_{\varepsilon}(B_{t_{i}^{n}%
})I_{[t_{i}^{n},t_{i+1}^{n})}(t)-\phi_{\varepsilon}(B_{t}),\\
I_{3}^{\varepsilon}(t)=\phi_{\varepsilon}(B_{t})-\text{sgn}(B_{t}).
\end{array}
\]
Letting $\mu(\pi_{n})\rightarrow0$ in the above inequality, we get%
\begin{align*}
&  \limsup_{\mu(\pi_{n})\rightarrow0}\mathbb{\hat{E}}^{G}\left[
\displaystyle \int_{0}^{T}\left \vert \sum \limits_{i=0}^{n-1}\text{sgn}%
(B_{t_{i}^{n}})I_{[t_{i}^{n},t_{i+1}^{n})}(t)-\text{sgn}(B_{t})\right \vert
^{2}dt\right] \\
&  \leq3\exp(\frac{1}{2\overline{\sigma}^{2}})\varepsilon^{2\alpha}\left \{
\lim_{\mu(\pi_{n})\rightarrow0}\sum_{i=0}^{n-1}\frac{1}{(t_{i}^{n})^{\alpha}%
}(t_{i+1}^{n}-t_{i}^{n})+\int_{0}^{T}\frac{1}{t^{\alpha}}dt\right \} \\
&  =6\exp(\frac{1}{2\overline{\sigma}^{2}})\varepsilon^{2\alpha}\int_{0}%
^{T}\frac{1}{t^{\alpha}}dt.
\end{align*}
Taking $\varepsilon \rightarrow0$, we obtain the desired result.
\end{proof}

\textbf{Proof of Lemma \ref{le-G2}. }By the proof of Lemma \ref{le-G1}, we
know that $\int_{t}^{T}$sgn$(B_{s})dB_{s}\in L_{G}^{2}(\Omega_{T})$ for $0\leq
t<T<\infty$. By the property of $\mathbb{\hat{E}}_{t}^{\tilde{G}}[\cdot]$, we
only need to prove that%
\begin{equation}
\mathbb{\hat{E}}_{t}^{\tilde{G}}[\varphi(\int_{t}^{T}\text{sgn}(B_{s}%
)dB_{s})]=u^{\varphi}(T-t,0), \label{new-newww-3}%
\end{equation}
where $0<t<T$, $\varphi \in C_{b.Lip}(\mathbb{R})$ and $u^{\varphi}$ is the
solution of PDE (\ref{neweq-23}). The proof is divided into two steps.

\textbf{Step 1:} We first prove (\ref{new-newww-3}) under the case
$\underline{\sigma}^{2}>0$. By Lemma \ref{sgnB}, we have%
\begin{equation}%
\begin{array}
[c]{l}%
\mathbb{\hat{E}}^{G}\left[  \left \vert \mathbb{\hat{E}}_{t}^{\tilde{G}%
}[\displaystyle \varphi(\sum_{i=0}^{n-1}\text{sgn}(B_{t_{i}^{n}}%
)(B_{t_{i+1}^{n}}-B_{t_{i}^{n}}))]-\mathbb{\hat{E}}_{t}^{\tilde{G}}%
[\varphi(\int_{t}^{T}\text{sgn}(B_{s})dB_{s})]\right \vert ^{2}\right] \\
\leq C_{\varphi}^{2}\mathbb{\hat{E}}^{G}\left[  \displaystyle \left \vert
\sum_{i=0}^{n-1}\text{sgn}(B_{t_{i}^{n}})(B_{t_{i+1}^{n}}-B_{t_{i}^{n}}%
)-\int_{t}^{T}\text{sgn}(B_{s})dB_{s}\right \vert ^{2}\right] \\
\leq C_{\varphi}^{2}\bar{\sigma}^{2}\mathbb{\hat{E}}^{G}\left[
\displaystyle \int_{t}^{T}\left \vert \sum \limits_{i=0}^{n-1}\text{sgn}%
(B_{t_{i}^{n}})I_{[t_{i}^{n},t_{i+1}^{n})}(s)-\text{sgn}(B_{s})\right \vert
^{2}ds\right] \\
\rightarrow0,\text{ as }n\rightarrow \infty,
\end{array}
\label{new-newww-4}%
\end{equation}
where $C_{\varphi}$ is the Lipschitz constant of $\varphi$. On the other hand,
noting that $-(B_{t_{i+1}^{n}}-B_{t_{i}^{n}})\overset{d}{=}(B_{t_{i+1}^{n}%
}-B_{t_{i}^{n}})$, by the property of $\mathbb{\hat{E}}_{t}^{\tilde{G}}%
[\cdot]$, we get%
\[
\mathbb{\hat{E}}_{t_{i}^{n}}^{\tilde{G}}[\phi(\text{sgn}(B_{t_{i}^{n}%
})(B_{t_{i+1}^{n}}-B_{t_{i}^{n}}))]=\mathbb{\hat{E}}_{t_{i}^{n}}^{\tilde{G}%
}[\phi(B_{t_{i+1}^{n}}-B_{t_{i}^{n}})]\text{ for any }\phi \in C_{b.Lip}%
(\mathbb{R}).
\]
From this, we can easily obtain%
\begin{equation}
\mathbb{\hat{E}}_{t}^{\tilde{G}}[\varphi(\sum_{i=0}^{n-1}\text{sgn}%
(B_{t_{i}^{n}})(B_{t_{i+1}^{n}}-B_{t_{i}^{n}}))]=\mathbb{\hat{E}}_{t}%
^{\tilde{G}}[\varphi(B_{T}-B_{t})]=u^{\varphi}(T-t,0). \label{new-newww-5}%
\end{equation}
Combining (\ref{new-newww-4}) and (\ref{new-newww-5}), we deduce
(\ref{new-newww-3}).

\textbf{Step 2:} Set $\tilde{\Omega}=C_{0}([0,\infty);\mathbb{R}^{2})$, the
corresponding canonical process is denoted by $(B_{t},\bar{B}_{t})_{t\geq0}$.
Define%
\[
G^{\prime}\left(  \left[
\begin{array}
[c]{cc}%
a & b\\
b & c
\end{array}
\right]  \right)  =G(a)+\frac{1}{2}c,\text{ }\tilde{G}^{\prime}\left(  \left[
\begin{array}
[c]{cc}%
a & b\\
b & c
\end{array}
\right]  \right)  =\tilde{G}(a)+\frac{1}{2}c\text{ for }a,b,c\in \mathbb{R}.
\]
Following Peng \cite{Peng 1}, we can construct $G^{\prime}$-expectation
$(\mathbb{\hat{E}}_{t}^{G^{\prime}})_{t\geq0}$ and $\tilde{G}^{\prime}%
$-expectation $(\mathbb{\hat{E}}_{t}^{\tilde{G}^{\prime}})_{t\geq0}$ via the
following PDE:
\[
\partial_{t}u-G(\partial_{xx}^{2}u)-\frac{1}{2}\partial_{yy}^{2}%
u=0,\ u(0,x,y)=\psi(x,y),
\]%
\[
\partial_{t}u-\tilde{G}(\partial_{xx}^{2}u)-\frac{1}{2}\partial_{yy}%
^{2}u=0,\ u(0,x,y)=\psi(x,y).
\]
It is easy to verify that $\mathbb{\hat{E}}_{t}^{G^{\prime}}[\xi
]=\mathbb{\hat{E}}_{t}^{G}[\xi]$, $\mathbb{\hat{E}}_{t}^{\tilde{G}^{\prime}%
}[\xi]=\mathbb{\hat{E}}_{t}^{\tilde{G}}[\xi]$ for each $\xi \in L_{G}%
^{1}(\Omega)$ and $(B_{t}+\delta \bar{B}_{t})_{t\geq0}$ is a $G_{\delta}%
$-Brownian motion under $\mathbb{\hat{E}}_{t}^{G^{\prime}}[\cdot]$ for each
$\delta>0$, where $G_{\delta}(a)=G(a)+\frac{1}{2}\delta^{2}a$ for
$a\in \mathbb{R}$. By Step 1, we have%
\begin{equation}%
\begin{array}
[c]{rl}%
\mathbb{\hat{E}}_{t}^{\tilde{G}^{\prime}}[\varphi(\int_{t}^{T}\text{sgn}%
(B_{s}+\delta \bar{B}_{s})d(B_{s}+\delta \bar{B}_{s}))] & =\mathbb{\hat{E}}%
_{t}^{\tilde{G}^{\prime}}[\varphi(B_{T}+\delta \bar{B}_{T}-B_{t}-\delta \bar
{B}_{t})]\\
& =\mathbb{\hat{E}}^{\tilde{G}^{\prime}}[\varphi(B_{T}+\delta \bar{B}_{T}%
-B_{t}-\delta \bar{B}_{t})].
\end{array}
\label{new-nvcc-2}%
\end{equation}
For each fixed $\varepsilon>0$, define $\phi_{\varepsilon}$ as in
(\ref{new-nvcc-1}). By Theorem \ref{Krylov2}, we have
\begin{align*}
&  \mathbb{\hat{E}}^{G^{\prime}}\left[  \left \vert \mathbb{\hat{E}}%
_{t}^{\tilde{G}^{\prime}}[\varphi(\int_{t}^{T}\text{sgn}(B_{s}+\delta \bar
{B}_{s})d(B_{s}+\delta \bar{B}_{s}))]-\mathbb{\hat{E}}_{t}^{\tilde{G}}%
[\varphi(\int_{t}^{T}\text{sgn}(B_{s})dB_{s})]\right \vert ^{2}\right] \\
&  =\mathbb{\hat{E}}^{G^{\prime}}\left[  \left \vert \mathbb{\hat{E}}%
_{t}^{\tilde{G}^{\prime}}[\varphi(\int_{t}^{T}\text{sgn}(B_{s}+\delta \bar
{B}_{s})d(B_{s}+\delta \bar{B}_{s}))]-\mathbb{\hat{E}}_{t}^{\tilde{G}^{\prime}%
}[\varphi(\int_{t}^{T}\text{sgn}(B_{s})dB_{s})]\right \vert ^{2}\right] \\
&  \leq C_{\varphi}^{2}\mathbb{\hat{E}}^{G^{\prime}}\left[  \left \vert
\int_{t}^{T}\text{sgn}(B_{s}+\delta \bar{B}_{s})d(B_{s}+\delta \bar{B}_{s}%
)-\int_{t}^{T}\text{sgn}(B_{s})dB_{s}\right \vert ^{2}\right] \\
&  \leq4C_{\varphi}^{2}\mathbb{\hat{E}}^{G^{\prime}}\left[  \left \vert
\int_{t}^{T}I_{1}^{\varepsilon,\delta}(s)d(B_{s}+\delta \bar{B}_{s})\right \vert
^{2}+\left \vert \int_{t}^{T}I_{2}^{\varepsilon,\delta}(s)dB_{s}\right \vert
^{2}+\left \vert \delta \int_{t}^{T}\phi_{\varepsilon}(B_{s}+\delta \bar{B}%
_{s})d\bar{B}_{s}\right \vert ^{2}+\left \vert \int_{t}^{T}I_{3}^{\varepsilon
}(s)dB_{s}\right \vert ^{2}\right] \\
&  \leq4C_{\varphi}^{2}\left \{  \mathbb{\hat{E}}^{G^{\prime}}\left[  \int
_{t}^{T}I_{(-\varepsilon,\varepsilon)}(B_{s}+\delta \bar{B}_{s})d\langle
B+\delta \bar{B}\rangle_{s}\right]  +\frac{\delta^{2}\bar{\sigma}^{2}T^{2}%
}{2\varepsilon^{2}}+\delta^{2}T+\mathbb{\hat{E}}^{G^{\prime}}\left[  \int
_{t}^{T}I_{(-\varepsilon,\varepsilon)}(B_{s})d\langle B\rangle_{s}\right]
\right \} \\
&  \leq4C_{\varphi}^{2}\left \{  2\mathbb{\hat{E}}^{G^{\prime}}[|B_{T}+\bar
{B}_{T}|]\varepsilon+\frac{\bar{\sigma}^{2}T^{2}+2\varepsilon^{2}%
T}{2\varepsilon^{2}}\delta^{2}+2\mathbb{\hat{E}}^{G}[|B_{T}|]\varepsilon
\right \}  ,
\end{align*}
where%
\[%
\begin{array}
[c]{l}%
I_{1}^{\varepsilon,\delta}(s)=\text{sgn}(B_{s}+\delta \bar{B}_{s}%
)-\phi_{\varepsilon}(B_{s}+\delta \bar{B}_{s}),\\
I_{2}^{\varepsilon,\delta}(s)=\phi_{\varepsilon}(B_{s}+\delta \bar{B}_{s}%
)-\phi_{\varepsilon}(B_{s}),\\
I_{3}^{\varepsilon}(s)=\phi_{\varepsilon}(B_{s})-\text{sgn}(B_{s}).
\end{array}
\]
Taking $\delta \downarrow0$ in the above inequality, we obtain%
\begin{align*}
&  \limsup_{\delta \downarrow0}\mathbb{\hat{E}}^{G^{\prime}}\left[  \left \vert
\mathbb{\hat{E}}_{t}^{\tilde{G}^{\prime}}[\varphi(\int_{t}^{T}\text{sgn}%
(B_{s}+\delta \bar{B}_{s})d(B_{s}+\delta \bar{B}_{s}))]-\mathbb{\hat{E}}%
_{t}^{\tilde{G}}[\varphi(\int_{t}^{T}\text{sgn}(B_{s})dB_{s})]\right \vert
^{2}\right] \\
&  \leq4C_{\varphi}^{2}\left \{  2\mathbb{\hat{E}}^{G^{\prime}}[|B_{T}+\bar
{B}_{T}|]\varepsilon+2\mathbb{\hat{E}}^{G}[|B_{T}|]\varepsilon \right \}  ,
\end{align*}
which implies%
\begin{equation}
\lim_{\delta \downarrow0}\mathbb{\hat{E}}^{G^{\prime}}\left[  \left \vert
\mathbb{\hat{E}}_{t}^{\tilde{G}^{\prime}}[\varphi(\int_{t}^{T}\text{sgn}%
(B_{s}+\delta \bar{B}_{s})d(B_{s}+\delta \bar{B}_{s}))]-\mathbb{\hat{E}}%
_{t}^{\tilde{G}}[\varphi(\int_{t}^{T}\text{sgn}(B_{s})dB_{s})]\right \vert
^{2}\right]  =0 \label{new-nvcc-3}%
\end{equation}
by letting $\varepsilon \downarrow0$. It is easy to check that%
\begin{equation}
\lim_{\delta \downarrow0}\left \vert \mathbb{\hat{E}}^{\tilde{G}^{\prime}%
}[\varphi(B_{T}+\delta \bar{B}_{T}-B_{t}-\delta \bar{B}_{t})]-\mathbb{\hat{E}%
}^{\tilde{G}}[\varphi(B_{T}-B_{t})]\right \vert =0. \label{new-nvcc-4}%
\end{equation}
By (\ref{new-nvcc-2}), (\ref{new-nvcc-3}) and (\ref{new-nvcc-4}), we obtain
(\ref{new-newww-3}). $\Box$

Similar to the proof of Theorem \ref{ref-B}, we can immediately obtain the
following reflection principle for $\tilde{G}$-Brownian motion $B$. The proof
is omitted.

\begin{theorem}
\label{ref-B1} Let $(B_{t})_{t\geq0}$ be a $1$-dimensional $\tilde{G}%
$-Brownian motion and $(L_{t}(0))_{t\geq0}$ be the local time of $B$ under
$\mathbb{\hat{E}}^{G}[\cdot]$. Then%
\[
(S_{t}-B_{t},S_{t})_{t\geq0}\overset{d}{=}(|B_{t}|,L_{t}(0))_{t\geq0},
\]
under $\mathbb{\hat{E}}^{\tilde{G}}[\cdot]$, where $S_{t}=\sup_{s\leq t}B_{s}$
for $t\geq0$.
\end{theorem}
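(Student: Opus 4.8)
The plan is to copy the proof of Theorem~\ref{ref-B} essentially verbatim, with Lemma~\ref{le-G2} in place of Lemma~\ref{le-G1}, adding one observation that lets the $\mathbb{\hat{E}}^{G}$-quasi-sure identities established there be used inside the (possibly nonlinear) expectation $\mathbb{\hat{E}}^{\tilde{G}}$. The point is that $L_{t}(0)$ and $\int_{0}^{t}\text{sgn}(B_{s})dB_{s}$ are elements of $L_{G}^{2}(\Omega_{t})$, and by (\ref{neweq-211}) the map $\mathbb{\hat{E}}^{\tilde{G}}$ extends continuously to $L_{G}^{1}(\Omega)$ with $|\mathbb{\hat{E}}^{\tilde{G}}[X]-\mathbb{\hat{E}}^{\tilde{G}}[Y]|\leq\mathbb{\hat{E}}^{G}[|X-Y|]$; hence two random variables in $L_{G}^{1}(\Omega)$ that agree q.s.\ (for the capacity $c$ associated with $\mathbb{\hat{E}}^{G}$) have the same $\mathbb{\hat{E}}^{\tilde{G}}$-value. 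In particular the It\^{o}--Tanaka formula $|B_{t}|=\int_{0}^{t}\text{sgn}(B_{s})dB_{s}+L_{t}(0)$ q.s.\ (the case $a=0$ of (\ref{new-nwww-5})) and the identity $\int_{0}^{\infty}|B_{s}|dL_{s}(0)=0$ q.s., both obtained in the proof of Theorem~\ref{ref-B}, remain available here.

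First I would introduce the Skorokhod map $F:\mathcal{D}_{0}([0,\infty))\rightarrow\mathcal{D}_{0}([0,\infty);\mathbb{R}^{2})$, $F(x)=(y,z)$, exactly as in the proof of Theorem~\ref{ref-B}. Since $L_{\cdot}(0)$ is increasing with $L_{0}(0)=0$, since $|B_{\cdot}|\geq0$, $|B_{t}|=\int_{0}^{t}\text{sgn}(B_{s})dB_{s}+L_{t}(0)$ and $\int_{0}^{\infty}|B_{s}|dL_{s}(0)=0$ hold q.s., Lemma~\ref{Skorohod} gives $F[(\int_{0}^{t}\text{sgn}(B_{s})dB_{s})_{t\geq0}]=(L_{t}(0),|B_{t}|)_{t\geq0}$ q.s., while $S_{t}-B_{t}=-B_{t}+\sup_{0\leq s\leq t}(-(-B_{s}))$ gives $F[(-B_{t})_{t\geq0}]=(S_{t},S_{t}-B_{t})_{t\geq0}$ pathwise. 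By (\ref{new-nwww-6}), $F$ is Lipschitz for the uniform topology, so that $X\overset{d}{=}Y$ under $\mathbb{\hat{E}}^{\tilde{G}}$ implies $F(X)\overset{d}{=}F(Y)$ under $\mathbb{\hat{E}}^{\tilde{G}}$.

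Next I would establish $(\int_{0}^{t}\text{sgn}(B_{s})dB_{s})_{t\geq0}\overset{d}{=}(B_{t})_{t\geq0}\overset{d}{=}(-B_{t})_{t\geq0}$ under $\mathbb{\hat{E}}^{\tilde{G}}$. The first relation is exactly Lemma~\ref{le-G2}; the second holds because $(-B_{t})_{t\geq0}$ is again a $\tilde{G}$-Brownian motion --- the PDE (\ref{neweq-23}) depends on $u$ only through $\partial_{xx}^{2}u$, which is invariant under $x\mapsto-x$, so $\mathbb{\hat{E}}^{\tilde{G}}[\varphi(-(B_{t+s}-B_{t}))]=\mathbb{\hat{E}}^{\tilde{G}}[\varphi(B_{t+s}-B_{t})]$, and the stationarity and independence of the increments are preserved under negation. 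Feeding these three processes through the continuous map $F$, and using the transfer principle of the first paragraph to identify $F[(\int_{0}^{t}\text{sgn}(B_{s})dB_{s})_{t\geq0}]$ with $(L_{t}(0),|B_{t}|)_{t\geq0}$ in $\mathbb{\hat{E}}^{\tilde{G}}$-distribution, one gets $(L_{t}(0),|B_{t}|)_{t\geq0}\overset{d}{=}(S_{t},S_{t}-B_{t})_{t\geq0}$ under $\mathbb{\hat{E}}^{\tilde{G}}$; exchanging the two coordinates is the claim.

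The hard part will be the bookkeeping around this transfer from quasi-sure equalities on the $G$-expectation space to $\mathbb{\hat{E}}^{\tilde{G}}$-distributional equalities of processes: one must check that the $L_{G}^{2}$-objects $\int_{0}^{t}\text{sgn}(B_{s})dB_{s}$ and $L_{t}(0)$ may legitimately be composed with the Lipschitz functional $F$ and that identical $\mathbb{\hat{E}}^{\tilde{G}}$-distribution of path-space processes survives such composition. This rests only on the domination estimate (\ref{neweq-211}) and on the fact that $B$, $-B$ and $\int_{0}^{\cdot}\text{sgn}(B_{s})dB_{s}$ are all realized on $C_{0}([0,\infty))$; once this is granted, everything else is a line-by-line repetition of the proofs of Theorem~\ref{ref-B} and Lemma~\ref{le-G2}.
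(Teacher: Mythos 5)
Your proposal is correct and is exactly the route the paper intends: the paper omits the proof of Theorem \ref{ref-B1}, stating only that it is obtained similarly to Theorem \ref{ref-B}, with Lemma \ref{le-G2} playing the role of Lemma \ref{le-G1}. Your added observation that the domination (\ref{neweq-211}) allows the quasi-sure identities (the It\^{o}--Tanaka formula and $\int_{0}^{\infty}|B_{s}|dL_{s}(0)=0$) established on the $G$-expectation space to be used under $\mathbb{\hat{E}}^{\tilde{G}}$ is precisely the detail the paper leaves implicit.
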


\begin{remark}
In particular, $(\sup_{s\leq t}(B_{s}-B_{t}))_{t\geq0}\overset{d}{=}%
(|B_{t}|)_{t\geq0}$ under $\mathbb{\hat{E}}^{\tilde{G}}[\cdot]$.
\end{remark}

\end{document}